\newtheorem{prop}{Proposition}[section]
\newtheorem{thm}[prop]{Theorem}
\newtheorem{lemma}[prop]{Lemma}
\newtheorem{cor}[prop]{Corollary}
\theoremstyle{definition}
\newtheorem{hyp}{Hypothesis}
\theoremstyle{remark}
\newtheorem{rmk}[prop]{Remark}
\newcommand{\ind}[1]{\mathbbm{1}_{#1}}
\newcommand{\dom}{\mathsf{D}}
\newcommand{\E}{\mathbb E}
\newcommand{\cF}{\mathscr{F}}
\newcommand{\cL}{\mathscr{L}}
\renewcommand{\P}{\mathbb{P}}
\newcommand{\enne}{\mathbb{N}}
\newcommand{\erre}{\mathbb{R}}
\newcommand{\embed}{\hookrightarrow}
\newcommand{\longto}{\longrightarrow}
\DeclareMathOperator{\Tr}{Tr}
\DeclarePairedDelimiter{\abs}{\lvert}{\rvert}
\DeclarePairedDelimiter{\norm}{\lVert}{\rVert}
\DeclarePairedDelimiterX\ip[2]{\langle}{\rangle}{#1,#2}
\DeclarePairedDelimiterX\oo[2]{]\!]}{[\![}{#1,#2}
\DeclarePairedDelimiterX\cc[2]{[\![}{]\!]}{#1,#2}
\DeclarePairedDelimiterX\co[2]{[\![}{[\![}{#1,#2}
\DeclarePairedDelimiterX\oc[2]{]\!]}{]\!]}{#1,#2}
\title{Positivity of mild solution to stochastic evolution equations with
  an application to forward rates}
\author{Carlo Marinelli\thanks{Department of Mathematics, University
    College London, Gower Street, London WC1E 6BT, UK. URL:
    \texttt{goo.gl/4GKJP}}}
\date{\normalsize April 1, 2019}
\begin{document}

\maketitle

\begin{abstract}
  We prove a maximum principle for mild solutions to stochastic
  evolution equations with (locally) Lipschitz coefficients and Wiener
  noise on weighted $L^2$ spaces. As an application, we provide
  sufficient conditions for the positivity of forward rates in the
  Heath-Jarrow-Morton model, considering the associated Musiela SPDE
  on a homogeneous weighted Sobolev space.
\end{abstract}


\section{Introduction}
Consider stochastic evolution equations of the type
\begin{equation}
  \label{eq:0}
  du(t) + Au(t)\,dt = f(u(t))\,dt + B(u(t))\,dW(t),
  \qquad u(0)=u_0,
\end{equation}
on a Hilbert function space $H$, where $A$ is a linear maximal
monotone operator on $H$, $W$ is a cylindrical Wiener process, and the
coefficients $f$ and $B$, which can be random and time-dependent,
satisfy suitable measurability and Lipschitz continuity
conditions. Precise assumptions on the data of the problem are given
in {\S}\ref{sec:main} below. Our goal is to establish conditions on the
coefficients $A$, $f$, and $B$ implying that the ``flow'' associated
to \eqref{eq:0} is positivity preserving, i.e. such that its solution
is positive at all times provided the initial datum is positive.

Maximum principles for stochastic PDEs have a long history, and some
references to the earlier literature can be found in
\cite{Kry:MP-SPDE}. Most recent contributions seem to consider
equations that admits a variational formulation (in the sense of
\cite{KR-spde,Pard}). As examples of such results, let us mention
\cite{Kry:MP-SPDE, Kry:shortIto}, where large classes of second-order
stochastic PDEs are considered, as well as \cite{DaGy} for the case of
equations driven by certain classes of discontinuous noise. Further
classes of second-order parabolic stochastic PDEs that can be treated
by techniques reminiscent of the variational ones are discussed in
\cite{DM:13,DMS:09}.
However, it is well known that not every (linear) maximal monotone
operator admits a variational formulation (i.e., grosso modo, it
cannot be represented as a bounded operator from a Banach space to its
dual), hence, in particular, not every equation admitting a mild
solution can be treated in the variational approach. The maximum
principle for mild solutions to \eqref{eq:0} obtained here is optimal,
as far as the conditions imposed on $A$ is concerned, in the sense
that we simply ask that the semigroup generated by $-A$ is positivity
preserving. Large classes of operators are thus included, beyond
second-order elliptic operators. On the other hand, we cannot treat,
at least for the time being, time-dependent random operators, some of
which are within the scope of the variational approach.
Some results on the maximum principle for mild solutions to stochastic
evolution equations can be found in the literature. One of the first
results in this direction seems to be that in \cite{Kote:comp}, where
$A$ can also be time-dependent, the coefficients $f$ and $B$ are
superposition operators, and the proof relies on some delicate
discretization arguments (unfortunately it was shown in
\cite{TessZab:Trotter} that such arguments are not entirely
correct). A different approach, again based on discretization, has
been developed in \cite{Ass:comp}.
Our proof seems conceptually quite simpler, as it relies only on
approximation arguments and a version of It\^o's formula.

As an application of the maximum principle for mild solutions to
stochastic evolution equations, we discuss the problem of positivity
of forward rates in the Heath-Jarrow-Morton framework
(see~\cite{HJM}), considering the associated Musiela SPDE on a
weighted Sobolev space (see, e.g., \cite{filipo,cm:MF10}). The same
problem is treated in \cite{FTT}, also for models driven by Poisson
random measures, by means of support theorems for Hilbert-space-valued
SDEs, under a strong smoothness assumption on the diffusion
coefficient $B$. We use instead a self-contained approximation
argument allowing to infer the positivity of mild solutions to the
Musiela SPDE in the above-mentioned weighted Sobolev space by the
positivity of mild solutions to regularized equations on weighted
$L^2$ spaces, which is in turn obtained by the maximum principle.

The rest of the text is organized as follows: in \S\ref{sec:prel},
after fixing some notation, we collect the main tools used in the
proof of the maximum principle of \S\ref{sec:main}. Basic facts about
the Heath-Jarrow-Morton model of the term structure of interest rates,
as well as about the well-posedness of Musiela's SPDE in different
function spaces, are contained in \S\ref{sec:HJM}. Two different
approximations of the volatility coefficient in Musiela's SPDE are
introduced and investigated in
\S\S\ref{sec:approx1}--\ref{sec:approx2}. These are the crucial
technical tools to deduce positivity of forward rates, discussed in
\S\ref{sec:fw}.

\medskip

\noindent%
\textbf{Acknowledgments.} The author is thankful to Fausto Gozzi for some
useful discussions on Ornstein-Uhlenbeck semigroups. The
hospitality of the Interdisziplin\"ares Zentrum f\"ur Komplexe Systeme
(IZKS) at the University of Bonn, Germany, where large part of the
work for this paper was done, is gratefully acknowledged.


\section{Preliminaries}
\label{sec:prel}

\subsection{Notation}
Let $(\Omega,\cF,\P)$ be a probability space endowed with a filtration
$(\cF_t)_{t \in \erre_+}$ satisfying the usual conditions of
right-continuity and completeness, with predictable $\sigma$-algebra
$\mathscr{P}$. All random elements will be defined on this stochastic
basis without further notice. We also stipulate that random variables
equal outside an event of probability zero are considered equal, and
that equality of stochastic processes is intended in the sense of
indistinguishability.
We shall denote by $W$ a cylindrical Wiener process on a separable
Hilbert space $U$.  

Let $H$ be a further Hilbert space. The spaces of linear bounded
operators and of Hilbert-Schmidt operators from $U$ to $H$ will be
denoted by $\cL(U,H)$ and $\cL^2(U,H)$, respectively.
Given a bilinear form $\Phi$ and a linear bounded operator $L$ on $H$,
denoting an orthonormal basis of $H$ by $(h_k)$, we set
\[
  \Tr_L \Phi := \sum_{k=1}^\infty \Phi(Lh_k,Lh_k).
\]
Given two Banach spaces $E$ and $F$, we shall write $E \embed F$ to
mean that $E$ is continuously embedded in $F$.

Let $m>0$. The space of measurable functions $\phi \colon H \to \erre$
such that
\[
  \sup_{x \in H} \frac{\abs{\phi(x)}}{1+\norm{x}^m} < \infty
\]
is denoted by $B_m(H)$. The subspace of $B_m(H)$ of continuous
functions satisfying the same boundedness condition is denoted by
$C_m(H)$. The first and second Fr\'echet derivatives of a function
$\phi \colon E \to F$ are denoted by $\phi'$ and $\phi''$,
respectively, while its G\^ateaux derivative is denoted by
$\delta\phi$. Let $k \in \enne$ and $m>0$. The space of $k$ times
continuously (Fr\'echet or, equivalently, G\^ateaux) differentiable
functions $\phi \colon H \to \erre$ such that
\[
  \sup_{x \in H} \frac{\norm{\phi^{(j)}(x)}}{1+\norm{x}^m} < \infty
  \qquad \forall j \leq k,
\]
where $\phi^{(k)}$ stands for the $k$-th order (Fr\'echet) derivative
of $\phi$, is denoted by $C^k_m(H)$. In the previous inequality, the
norm of $\phi^{(j)}(x)$ is the norm of the space of $j$-linear functions
on $H$ with values in $\erre$.

\subsection{Function spaces on $\erre_+$}
\label{ssec:sp}
For any $\alpha \in \erre$, let $\mu_\alpha$ be the measure on the
Borel $\sigma$-algebra of $\erre_+$ whose density with respect to
Lebesgue measure is equal to $x \mapsto e^{\alpha x}$. We shall denote
$L^2(\erre_+,\mu_\alpha)$ by $L^2_\alpha$. Note that
$\mu_{-\alpha}/\alpha$ is a probability measure if $\alpha>0$. We
shall use the elementary observation that, for any $\alpha>0$,
$L^2_\alpha$ is continuously embedded in $L^1(\erre_+)$. In fact, for
any $f \in L^2_\alpha$, one has, by the Cauchy-Schwarz inequality,
\[
  \int_0^\infty \abs{f(x)}\,dx = \int_0^\infty \abs{f(x)}
  e^{\frac{\alpha}{2}x} e^{-\frac{\alpha}{2}x} \,dx \leq
  \frac{1}{\sqrt{\alpha}} \biggl( \int_0^\infty \abs{f(x)}^2
  e^{\alpha x} \,dx \biggr)^{1/2}.
\]

For any $\alpha>0$, let $H_\alpha$ denote the vector space of
functions $\phi \in L^1_{\mathrm{loc}}(\erre_+)$ such that $\phi' \in
L^2_\alpha$, where $\phi'$ stands for the derivative of $\phi$ in the
sense of distributions. Since $L^2_\alpha \embed L^1(\erre_+)$, it
follows that every $\phi \in H_\alpha$ admits a finite limit at
$+\infty$. In fact, for any $a \in \erre_+$ such that
$\abs{\phi(a)}<\infty$, one has
\[
\phi(x) = \phi(a) + \int_a^x \phi'(y)\,dy,
\]
hence
\[
  \phi(\infty) := \lim_{x \to +\infty} \phi(x)
  = \phi(a) + \int_a^\infty \phi'(y)\,dy < \infty.
\]
The vector space $H_\alpha$ endowed with the scalar product
\[
\ip{\phi}{\psi}_{H_\alpha} := \phi(\infty)\psi(\infty)
+ \int_0^\infty \phi'(x)\psi'(x) e^{\alpha x}\,dx,
\]
and corresponding norm
\[
\norm[\big]{\phi}^2_{H_\alpha} :=
\phi(\infty)^2 + \int_0^\infty \abs{\phi'(x)}^2 e^{\alpha x}\,dx,
\]
is a separable Hilbert space. This is a slight modification of a
construction suggested by Filipovi\'c in \cite{filipo} (where an
equivalent norm is adopted) already used, e.g., in \cite{cm:QF10,
  tehranchi}.
The spaces $H_\alpha$ satisfy simple but crucial embedding properties.
\begin{lemma}
  Let $\alpha>0$. Then $H_\alpha$ is continuously embedded in
  $C_b(\erre_+)$ and in $L^p(\erre_+) \oplus \erre$ for every
  $p \in [1,\infty\mathclose[$.
\end{lemma}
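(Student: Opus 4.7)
The plan is to exploit the representation $\phi(x)=\phi(\infty)-\int_x^\infty \phi'(y)\,dy$, which is already implicit in the definition of $H_\alpha$, and to combine it with a weighted Cauchy--Schwarz estimate analogous to the one carried out in the excerpt for $L^2_\alpha\embed L^1(\erre_+)$.

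First I would justify that every $\phi\in H_\alpha$ admits a continuous representative. Since $\phi'\in L^2_\alpha\embed L^1(\erre_+)$, the distributional derivative is in $L^1_{\mathrm{loc}}$, so $\phi$ coincides a.e.\ with an absolutely continuous function; relabelling $\phi$ by this representative, the identity $\phi(x)=\phi(\infty)-\int_x^\infty \phi'(y)\,dy$ holds pointwise for every $x\in\erre_+$. Now applying Cauchy--Schwarz with weight $e^{\alpha y}$ gives
\[
  \int_x^\infty \abs{\phi'(y)}\,dy \leq
  \Bigl(\int_x^\infty e^{-\alpha y}\,dy\Bigr)^{1/2}
  \Bigl(\int_x^\infty \abs{\phi'(y)}^2 e^{\alpha y}\,dy\Bigr)^{1/2}
  \leq \frac{e^{-\alpha x/2}}{\sqrt{\alpha}}\,\norm{\phi'}_{L^2_\alpha},
\]
so that $\abs{\phi(x)-\phi(\infty)}\leq \alpha^{-1/2} e^{-\alpha x/2}\norm{\phi'}_{L^2_\alpha}$. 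This controls the sup norm by
\[
  \norm{\phi}_{\infty} \leq \abs{\phi(\infty)} + \frac{1}{\sqrt{\alpha}}\norm{\phi'}_{L^2_\alpha}
  \leq \Bigl(1+\tfrac{1}{\sqrt{\alpha}}\Bigr)\norm{\phi}_{H_\alpha},
\]
and continuity of the representative gives the embedding into $C_b(\erre_+)$.

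For the second embedding I would write $\phi = (\phi-\phi(\infty)) + \phi(\infty)$, interpreting the first summand as an element of $L^p(\erre_+)$ and the second as a constant in $\erre$. Using the pointwise exponential decay established above,
\[
  \int_0^\infty \abs{\phi(x)-\phi(\infty)}^p \,dx \leq
  \alpha^{-p/2}\,\norm{\phi'}_{L^2_\alpha}^p \int_0^\infty e^{-\alpha p x/2}\,dx
  = \frac{2}{\alpha p}\,\alpha^{-p/2}\,\norm{\phi'}_{L^2_\alpha}^p,
\]
which is finite for every $p\in[1,\infty\mathclose[$ and is dominated by a constant times $\norm{\phi}_{H_\alpha}^p$. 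Together with the trivial estimate $\abs{\phi(\infty)}\leq\norm{\phi}_{H_\alpha}$, this shows that the map $\phi\mapsto(\phi-\phi(\infty),\phi(\infty))$ is a continuous linear injection $H_\alpha\to L^p(\erre_+)\oplus\erre$.

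There is no real obstacle: the decomposition into a value-at-infinity plus a tail integral is forced by the definition of the norm, and the weighted Cauchy--Schwarz estimate upgrades the local $L^1$ information on $\phi'$ to pointwise exponential decay of $\phi-\phi(\infty)$, which feeds directly both embeddings. The only point that requires a little care is choosing the continuous representative so that the formula $\phi(x)=\phi(\infty)-\int_x^\infty \phi'$ is valid at every $x$, rather than just almost everywhere.
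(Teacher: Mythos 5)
Your proof is correct and follows essentially the same route as the paper's: the weighted Cauchy--Schwarz estimate giving $\abs{\phi(x)-\phi(\infty)}\leq \alpha^{-1/2}e^{-\alpha x/2}\norm{\phi}_{H_\alpha}$ is exactly the paper's key inequality, and both arguments then read off the $C_b$ and $L^p\oplus\erre$ embeddings from the resulting exponential decay. Your added care about the choice of continuous representative is a welcome elaboration of a point the paper leaves implicit.
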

\begin{proof}
  Let $\phi \in H_\alpha$. The Cauchy-Schwarz inequality yields
  \begin{align*}
    \abs[\big]{\phi(x) - \phi(\infty)}
    &\leq \int_x^\infty \abs[\big]{\phi'(y)}\,dy%
    = \int_x^\infty \abs[\big]{\phi'(y)} e^{\frac{\alpha}{2} y}
    e^{-\frac{\alpha}{2} y}\,dy\\
    &\leq \left( \int_0^\infty \abs[\big]{\phi'(y)}^2 e^{\alpha y}\,dy
    \right)^{1/2} \left( \int_x^\infty e^{-\alpha y}\,dy \right)^{1/2}\\
    &\leq \norm[\big]{\phi}_{H_\alpha} \frac{1}{\sqrt\alpha}
    e^{-\frac{\alpha}{2}x},
  \end{align*}
  where the last term, as a function of $x$, belongs to $L^p(\erre_+)$
  for all $p \in [1,\infty]$. The continuity of $\phi$ can be
  established by a completely similar argument.
\end{proof}

We shall also need basic properties of the translation semigroup
$S=(S(t))_{t \geq 0}$, $S(t)\colon \phi \mapsto \phi(\cdot + t)$.
\begin{lemma}
  Let $\alpha>0$. The translation semigroup $S$ is a strongly
  continuous semigroup of contractions on $H_\alpha$ with
  infinitesimal generator $-A$,
  \begin{align*}
    A\colon \dom(A) \subset H_\alpha &\longto H_\alpha\\
    \phi &\longmapsto -\phi',
  \end{align*}
  where
  $\dom(A) = \{\phi \in H_\alpha \cap C^1(\erre_+):\; \phi' \in
  H_\alpha\}$.
\end{lemma}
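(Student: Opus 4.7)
The plan is to verify four things in order: the semigroup law, contractivity, strong continuity, and identification of the generator. The semigroup law $S(t+s) = S(t)S(s)$ follows from $\phi(x+t+s) = (S(s)\phi)(x+t)$. For contractivity I observe that translation leaves the limit at infinity unchanged, and the change of variable $y = x+t$ in the integral part of the norm yields $\|S(t)\phi\|_{H_\alpha}^2 = \phi(\infty)^2 + e^{-\alpha t}\int_t^\infty |\phi'(y)|^2 e^{\alpha y}\,dy \leq \|\phi\|^2_{H_\alpha}$. For strong continuity I would first exhibit a dense subspace on which the convergence is elementary: let $V := \{\phi \in H_\alpha : \phi' \in C_c(\erre_+)\}$, which is dense since for any $\phi \in H_\alpha$ one can pick $g_n \in C_c(\erre_+)$ with $g_n \to \phi'$ in $L^2_\alpha$ and set $\phi_n(x) := \phi(\infty) - \int_x^\infty g_n$, obtaining $\phi_n \in V$ with $\|\phi - \phi_n\|_{H_\alpha} = \|\phi' - g_n\|_{L^2_\alpha} \to 0$. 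On $V$, translation preserves the value at infinity and acts continuously on the uniformly continuous, compactly supported derivative, so $\|S(t)\phi - \phi\|_{H_\alpha} \to 0$; the general case then follows by a standard $\varepsilon/3$ argument based on contractivity.

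For the inclusion $\dom(A) \subset \dom(G)$, where $G$ denotes the infinitesimal generator of $S$, take $\phi \in H_\alpha \cap C^1(\erre_+)$ with $\phi' \in H_\alpha$, and write $\chi := (\phi')' \in L^2_\alpha$. The key initial observation is that $\phi'(\infty) = 0$: from $\phi \in H_\alpha$ one has $\phi' \in L^2_\alpha \embed L^1(\erre_+)$, while $\phi' \in H_\alpha$ is continuous (by the previous lemma) and admits a finite limit at $+\infty$; a continuous $L^1$-function with a limit at $+\infty$ must have that limit equal to zero. Consequently the value at infinity of $t^{-1}(S(t)\phi - \phi) - \phi'$ vanishes identically in $t > 0$, and only the $L^2_\alpha$ integral of the derivative needs to be controlled. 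Writing $t^{-1}(\phi'(x+t) - \phi'(x)) - \chi(x) = t^{-1}\int_0^t (\chi(x+s) - \chi(x))\,ds$, Minkowski's integral inequality dominates the corresponding $L^2_\alpha$ norm by $t^{-1}\int_0^t \|\chi(\cdot+s) - \chi\|_{L^2_\alpha}\,ds$, which tends to $0$ as $t \to 0$ by the standard strong continuity of translation on $L^2_\alpha$ applied to $\chi$.

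For the reverse inclusion $\dom(G) \subset \dom(A)$, assume $t^{-1}(S(t)\phi - \phi) \to \psi$ in $H_\alpha$. The embedding $H_\alpha \embed C_b(\erre_+)$ from the previous lemma upgrades this to uniform convergence on $\erre_+$, so the right derivative of $\phi$ exists pointwise and equals $\psi \in H_\alpha \subset C_b(\erre_+)$; the standard fact that a continuous function with a continuous right derivative is $C^1$ then yields $\phi \in C^1(\erre_+)$ with $\phi' = \psi \in H_\alpha$, i.e.\ $\phi \in \dom(A)$ and $G\phi = \psi = -A\phi$. The points I expect to need the most care with are the vanishing $\phi'(\infty) = 0$ in the forward direction, without which the ``value-at-infinity'' piece of the $H_\alpha$ norm would not vanish, and the use of the $C_b$-embedding to pass from $H_\alpha$-convergence of the incremental ratios to pointwise convergence in the reverse direction; once these are settled, the remainder is a routine adaptation of translation-semigroup theory on weighted $L^p$ spaces.
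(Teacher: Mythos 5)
Your argument is correct, but it is organized quite differently from the paper's: the paper proves only the contractivity estimate directly (by the same change of variables you use) and delegates both the strong continuity and the identification of the generator to Filipovi\'c's monograph, where they are established for an equivalent norm on $H_\alpha$. You instead supply self-contained proofs of all four ingredients. Your density argument (approximating $\phi'$ in $L^2_\alpha$ by $g_n \in C_c(\erre_+)$ and reconstructing $\phi_n$ from the value at infinity, so that $\norm{\phi-\phi_n}_{H_\alpha}=\norm{\phi'-g_n}_{L^2_\alpha}$) combined with contractivity and an $\varepsilon/3$ argument is the standard route and is sound. In the generator identification you correctly isolate the two genuinely delicate points: that $\phi'(\infty)=0$ for $\phi \in \dom(A)$ (needed so that the value-at-infinity component of the $H_\alpha$ norm of the difference quotient minus $\phi'$ vanishes), which you justify by combining $\phi' \in L^2_\alpha \embed L^1(\erre_+)$ with the existence of the limit at infinity for elements of $H_\alpha$; and, for the converse inclusion, the upgrade from $H_\alpha$-convergence of the difference quotients to uniform convergence via $H_\alpha \embed C_b(\erre_+)$, followed by the classical fact that a continuous function with a continuous right derivative is $C^1$. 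The only implicit ingredient you should make sure to record is the strong continuity of translation on $L^2_\alpha$ with the \emph{growing} weight $e^{\alpha x}$ (used for $\chi=(\phi')'$ via Minkowski's integral inequality); this holds because translation is in fact a contraction on $L^2_\alpha$ and $C_c(\erre_+)$ is dense, but it is a different space from the $L^2_{-\alpha}$ treated explicitly in the paper, so it deserves a line of its own. What your approach buys is independence from the external reference and from the equivalence of norms; what it costs is length, since the paper's proof occupies only a few lines.
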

\begin{proof}
  The strong continuity of $S$ in $H_\alpha$, equipped with an
  equivalent norm, has been proved in \cite[pp.~78--79]{filipo}, hence
  it continues to hold in our case too. The identification of the
  negative generator $A$ is an immediate consequence thereof
  (cf. \cite[Lemma~4.2.2]{filipo}). The contractivity of $S$ is
  established as follows: for any $\phi \in H_\alpha$,
  \begin{align*}
    \norm[\big]{S(t)\phi}^2_{H_\alpha}
    &\leq \phi(\infty)^2 + \int_0^\infty \abs[\big]{\phi'(x+t)}^2
      e^{\alpha x}\,dx\\
    &= \phi(\infty)^2 
      + e^{-\alpha t} \int_t^\infty \abs[\big]{\phi'(x)}^2 e^{\alpha x}\,dx\\
    &\leq \norm[\big]{\phi}_{H_\alpha}^2.
    \qedhere
  \end{align*}
\end{proof}

The strong continuity of $S$ in $L^2_\alpha$ is most likely well
known, but we have not been able to find a reference, hence we provide the
(simple) proof for the reader's convenience.
\begin{lemma}
  Let $\alpha>0$. The translation semigroup $S$ is strongly continuous
  in $L^2_{-\alpha}$, and the semigroup $e^{-\frac{\alpha}{2} \cdot}S$
  is contractive in $L^2_{-\alpha}$.
\end{lemma}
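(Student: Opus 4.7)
The proof plan breaks naturally into two parts: a direct norm estimate that yields contractivity of the rescaled semigroup (and a uniform bound for $S$ itself on bounded time intervals), followed by a standard density argument for strong continuity.

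First I would carry out the change-of-variable computation that controls $\norm{S(t)\phi}_{L^2_{-\alpha}}$. For $\phi \in L^2_{-\alpha}$ and $t \geq 0$, substituting $y = x+t$ gives
\[
  \norm[\big]{S(t)\phi}^2_{L^2_{-\alpha}}
  = \int_0^\infty \abs{\phi(x+t)}^2 e^{-\alpha x}\,dx
  = e^{\alpha t} \int_t^\infty \abs{\phi(y)}^2 e^{-\alpha y}\,dy
  \leq e^{\alpha t} \norm[\big]{\phi}^2_{L^2_{-\alpha}}.
\]
Thus $\norm{S(t)}_{\cL(L^2_{-\alpha})} \leq e^{\alpha t/2}$, which immediately implies that $t \mapsto e^{-\alpha t/2} S(t)$ is a contraction semigroup on $L^2_{-\alpha}$, proving the second assertion. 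This also supplies the uniform bound on $\norm{S(t)}$ needed in the next step.

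Next I would establish strong continuity by the usual three-$\varepsilon$ approximation. The space $C_c(\erre_+)$ of continuous functions with compact support is dense in $L^2_{-\alpha}$. For $\phi \in C_c(\erre_+)$ supported in $[0,R]$, $\phi$ is uniformly continuous, so
\[
  \norm[\big]{S(t)\phi - \phi}^2_{L^2_{-\alpha}}
  \leq \sup_{x \in \erre_+} \abs[\big]{\phi(x+t)-\phi(x)}^2
        \int_0^{R+1} e^{-\alpha x}\,dx
  \longto 0
\]
as $t \to 0^+$, since for $t \leq 1$ the integrand vanishes outside $[0,R+1]$. For general $\phi \in L^2_{-\alpha}$ and $\varepsilon>0$, pick $\phi_\varepsilon \in C_c(\erre_+)$ with $\norm{\phi-\phi_\varepsilon}_{L^2_{-\alpha}}<\varepsilon$; then for $t \in [0,1]$,
\[
  \norm[\big]{S(t)\phi - \phi}_{L^2_{-\alpha}}
  \leq \bigl(1 + e^{\alpha/2}\bigr)\varepsilon
    + \norm[\big]{S(t)\phi_\varepsilon - \phi_\varepsilon}_{L^2_{-\alpha}},
\]
and letting $t \to 0$ followed by $\varepsilon \to 0$ gives the claim. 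The semigroup property $S(t+s)=S(t)S(s)$ is trivial, so combined with the uniform bound this yields strong continuity.

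There is no real obstacle here: the whole argument is a density computation plus a one-line change of variables, and the only point worth being careful about is that the measure $\mu_{-\alpha}$ has \emph{decreasing} density $e^{-\alpha x}$, so the translation has norm slightly larger than one (giving the factor $e^{\alpha t/2}$ that one must compensate to get contractivity).
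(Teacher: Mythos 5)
Your proof is correct, and the contractivity estimate is essentially the same change-of-variables computation as in the paper. For strong continuity, however, you take a genuinely different and more self-contained route: the paper conjugates by the isometry $f \mapsto f e^{-\frac{\alpha}{2}\cdot}$ from $L^2_{-\alpha}$ into the unweighted $L^2(\erre_+)$, splits $S(t)f - f$ into a term controlled by the strong continuity of translations on $L^2$ (quoted from Engel--Nagel) plus a remainder of size $(e^{\frac{\alpha}{2}t}-1)\norm{f}_{L^2_{-\alpha}}$, whereas you prove strong continuity from scratch via density of $C_c(\erre_+)$ in $L^2_{-\alpha}$, uniform continuity of compactly supported functions, and the locally uniform bound $\norm{S(t)}\leq e^{\frac{\alpha}{2}t}$ in a three-$\varepsilon$ argument. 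The paper's reduction is shorter on the page because it outsources the hard step to a cited result; your argument is longer but elementary and independent of the unweighted case, in effect re-proving that cited fact directly in the weighted setting. Both are complete. One minor cosmetic remark: for $\phi$ supported in $[0,R]$ the difference $S(t)\phi-\phi$ is already supported in $[0,R]$ (since $\phi(\cdot+t)$ is supported in $[0,R-t]$), so your enlargement of the domain of integration to $[0,R+1]$ is harmless but unnecessary.
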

\begin{proof}
  Let $f \in L^2_{-\alpha}$. One has, for any $t>0$,
  \begin{align*}
  \norm[\big]{S(t)f - f}_{L^2_{-\alpha}} 
  &= \norm[\big]{f(\cdot + t)e^{-\frac{\alpha}{2}\cdot} 
     - f e^{-\frac{\alpha}{2}\cdot}}_{L^2}\\
  &\leq \norm[\big]{f(\cdot + t) e^{-\frac{\alpha}{2}\cdot} 
     - f(\cdot + t) e^{-\frac{\alpha}{2}(\cdot + t)}}_{L^2}\\
  &\quad + \norm[\big]{f(\cdot + t) e^{-\frac{\alpha}{2}(\cdot + t)}
     - f e^{-\frac{\alpha}{2}\cdot}}_{L^2},
  \end{align*}
  where, since $f \in L^2_{-\alpha}$ implies that
  $fe^{\frac{\alpha}{2} \cdot} \in L^2$, the last term converges to
  zero as $t \to 0$ by the strong continuity of $S$ in $L^2$ (see,
  e.g., \cite[{\S}I.4.16]{EnNa}). Moreover,
  \begin{align*}
  &\norm[\big]{f(\cdot + t) e^{-\frac{\alpha}{2}\cdot} 
     - f(\cdot + t) e^{-\frac{\alpha}{2}(\cdot + t)}}_{L^2}\\
  &\hspace{3em} = \norm[\big]{f(\cdot + t) e^{-\frac{\alpha}{2}(\cdot+t)}
      e^{\frac{\alpha}{2}t} 
     - f(\cdot + t) e^{-\frac{\alpha}{2}(\cdot + t)}}_{L^2}\\
  &\hspace{3em} = \bigl( e^{\frac{\alpha}{2}t} - 1 \bigr)
  \norm[\big]{f(\cdot + t) e^{-\frac{\alpha}{2}(\cdot+t)}}_{L^2}\\
  &\hspace{3em} \leq \bigl( e^{\frac{\alpha}{2}t} - 1 \bigr)
  \norm[\big]{f}_{L^2},
  \end{align*}
  where the last term also converges to zero as $t \to 0$. Strong
  continuity of $S$ in $L^2_{-\alpha}$ is thus proved. The
  contractivity of $\bigl(e^{-\frac{\alpha}{2}t}S(t)\bigr)_{t\geq 0}$
  is a consequence of the following entirely analogous computation:
  \[
  \norm[\big]{S(t)f}_{L^2_{-\alpha}}
  = \norm[\big]{f(\cdot + t)e^{-\frac{\alpha}{2} \cdot}}_{L^2}
  = e^{\frac{\alpha}{2} t} \norm[\big]{f(\cdot + t)e^{-\frac{\alpha}{2}(\cdot+t)}}_{L^2}
  \leq e^{\frac{\alpha}{2} t} \norm[\big]{f}_{L^2_{-\alpha}}.
  \qedhere
  \]
\end{proof}

The lemma implies that there exists a densely defined linear operator
$A$ such that $-A$ is the infinitesimal generator of $S$, and that
$A+\alpha/2$ is maximal monotone in $L^2_{-\alpha}$. Moreover, one has
$A\phi=-\phi'$ for every $\phi$ belonging to $C^\infty_c(\erre_+)$,
which is dense in $L^2_{-\alpha}$.
We shall use the same notation for the generators of $S$ in $H_\alpha$
and in $L^2_{-\alpha}$, as they coincide on the intersection of their
domains.

\subsection{A hypoelliptic Ornstein-Uhlenbeck semigroup}
\label{ssec:OU}
Let $R=(R_\alpha)_{\alpha>0}$ be the hypoelliptic Ornstein-Uhlenbeck
semigroup on $C_b(H)$, the space of continuous bounded functions on
$H$ with values in $\erre$, defined as
\[
  R_\alpha g(x) := \int_H g\bigl( e^{\alpha C}x + y \bigr)
  \,\gamma_\alpha(dy),
\]
where $C$ is a strictly negative self-adjoint operator with $C^{-1}$ of
trace class, and $\gamma_\alpha$ is a centered Gaussian measure on $H$
with covariance operator
\[
  Q_\alpha := -\frac12 C^{-1} \bigl( I - e^{2\alpha C} \bigr).
\]
We shall need the following properties of the semigroup $R$, for the
proof of which we refer to the monographs \cite{DP:K,DZ02,FGS}.
\begin{lemma}
  \label{lm:OU}
  Let $m>0$. The semigroup $R$ extends to a semigroup of continuous
  linear operators on $C_m(H)$. Moreover, the image of $B_m(H)$
  through $R_\alpha$ is contained in $C^\infty(H)$.
\end{lemma}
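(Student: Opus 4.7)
\textbf{Plan for Lemma~\ref{lm:OU}.} The plan is to treat the two assertions separately, using only standard Gaussian calculus on Hilbert spaces.

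For the extension to $C_m(H)$: given $g\in C_m(H)$ with $\abs{g(z)}\leq K_g(1+\norm{z}^m)$, the elementary inequality $(a+b)^m\leq c_m(a^m+b^m)$, the contractivity $\norm{e^{\alpha C}}_{\cL(H)}\leq 1$ (since $C$ is strictly negative self-adjoint), and the finiteness of all polynomial moments of $\gamma_\alpha$ (Fernique's theorem) together yield
\[
  \abs[\big]{R_\alpha g(x)} \leq c_m K_g \Bigl( 1 + \norm{x}^m + \int_H \norm{y}^m\,\gamma_\alpha(dy) \Bigr).
\]
Hence $R_\alpha$ maps $C_m(H)$ continuously into $B_m(H)$. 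Continuity of $R_\alpha g$ as a function of $x$ follows from continuity of $g$ and dominated convergence along any sequence $x_n\to x$, with the dominating integrand the same but with $\sup_n \norm{x_n}^m$ in place of $\norm{x}^m$. The semigroup identity $R_\alpha R_\beta=R_{\alpha+\beta}$ on $C_b(H)$ is the Chapman--Kolmogorov identity for the associated Ornstein--Uhlenbeck process, and extends to $C_m(H)$ by the continuity just established.

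For the smoothing property $R_\alpha(B_m(H))\subset C^\infty(H)$: the key analytic input is the null-controllability condition
\[
  e^{\alpha C}(H)\subset Q_\alpha^{1/2}(H).
\]
Diagonalising $C$ in an orthonormal basis $(e_k)$ with $Ce_k=-\lambda_k e_k$, $\lambda_k>0$, $\sum\lambda_k^{-1}<\infty$, the operator $\Lambda_\alpha:=Q_\alpha^{-1/2}e^{\alpha C}$ has eigenvalues $\bigl(2\lambda_k/(e^{2\alpha\lambda_k}-1)\bigr)^{1/2}$, which are uniformly bounded in $k$ (as $\lambda\mapsto 2\lambda/(e^{2\alpha\lambda}-1)$ is bounded on $(0,\infty)$), so $\Lambda_\alpha\in\cL(H)$. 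The Cameron--Martin formula then rewrites
\[
  R_\alpha g(x) = \int_H g(z)\,\exp\!\Bigl( W_{\Lambda_\alpha x}(z) - \tfrac12 \norm{\Lambda_\alpha x}^2 \Bigr)\,\gamma_\alpha(dz),
\]
where $h\mapsto W_h$ is the Paley--Wiener map of $(H,\gamma_\alpha)$, so that each $W_h$ is a centered Gaussian random variable of variance $\norm{h}^2$, linear in $h$. Since $x\mapsto\Lambda_\alpha x$ is linear and bounded, the integrand is smooth in $x$, and its Fr\'echet derivatives are polynomials in $W_{\Lambda_\alpha(\cdot)}$ multiplied by the same density. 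Differentiation under the integral is justified by H\"older's inequality, combining the polynomial growth of $g$, the polynomial moments of $\gamma_\alpha$, and the local boundedness in $x$ of the Gaussian exponential moments of $W_{\Lambda_\alpha x}$.

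The main obstacle is the verification of the null-controllability condition $e^{\alpha C}(H)\subset Q_\alpha^{1/2}(H)$; once this eigenvalue estimate is in hand, the remainder is routine Gaussian calculus, which is presumably the reason the author simply refers to the cited monographs.
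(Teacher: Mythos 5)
The paper offers no proof of this lemma at all, delegating it to the cited monographs; your sketch reproduces exactly the standard argument found there --- the moment bound plus dominated convergence for the extension to $C_m(H)$, and the null-controllability condition $e^{\alpha C}(H)\subset Q_\alpha^{1/2}(H)$ (your eigenvalue computation $\bigl(2\lambda_k/(e^{2\alpha\lambda_k}-1)\bigr)^{1/2}$ and the boundedness of $\lambda\mapsto 2\lambda/(e^{2\alpha\lambda}-1)$ are correct) combined with the Cameron--Martin formula for the smoothing --- and it is correct. Nothing essential is missing beyond the routine estimates you already flag as such.
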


The following pointwise continuity property of $\alpha \mapsto R_\alpha$
will play a fundamental role.
\begin{prop}
  \label{prop:OU}
  Let $m>0$ and $g \in C_m(H)$. Then $R_\alpha g$ converges pointwise to $g$
  as $\alpha \to 0$.
\end{prop}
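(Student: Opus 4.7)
The plan is to realize $R_\alpha g(x)$ as an expectation and then apply a convergence-in-probability + uniform-integrability argument. Fix $x\in H$ and, on some auxiliary probability space, let $(Y_\alpha)_{\alpha>0}$ be $H$-valued centered Gaussian random variables with laws $\gamma_\alpha$, so that
\[
R_\alpha g(x) = \E\, g\bigl(e^{\alpha C}x + Y_\alpha\bigr),
\]
and one needs to show the right-hand side tends to $g(x)$ as $\alpha\to 0$.

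First I would prove $e^{\alpha C}x + Y_\alpha \to x$ in probability. The deterministic part converges because $C$ is negative self-adjoint, hence generates a strongly continuous contraction semigroup on $H$, giving $e^{\alpha C}x \to x$. For the Gaussian part, I would diagonalize $C$ along an orthonormal basis with eigenvalues $\lambda_k<0$ and compute
\[
\E\norm{Y_\alpha}^2 = \Tr Q_\alpha = \sum_{k=1}^\infty \frac{1-e^{2\alpha\lambda_k}}{2\abs{\lambda_k}}.
\]
Each summand is dominated by the summable sequence $1/(2\abs{\lambda_k})$ (summability is precisely the trace class hypothesis on $-C^{-1}$) and vanishes pointwise as $\alpha\to 0$, so dominated convergence yields $\Tr Q_\alpha\to 0$, and therefore $Y_\alpha\to 0$ in $L^2(\Omega;H)$. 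Continuity of $g$ then gives $g(e^{\alpha C}x+Y_\alpha)\to g(x)$ in probability.

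To upgrade to convergence of expectations I would appeal to uniform integrability. Using $\abs{g(z)}\leq K(1+\norm{z}^m)$ together with the contractivity $\norm{e^{\alpha C}x}\leq\norm{x}$, the task reduces to a moment bound of the form $\sup_{\alpha\in(0,1]}\E\norm{Y_\alpha}^{mp}<\infty$ for some $p>1$. This is supplied by Fernique's theorem: for centered Gaussian random variables with values in a Hilbert space all norm-moments are controlled by the second moment, so $\E\norm{Y_\alpha}^{mp}\lesssim (\Tr Q_\alpha)^{mp/2}$, which is in fact vanishing. Convergence in probability together with this uniform $L^p$-bound gives convergence in $L^1$ of $g(e^{\alpha C}x+Y_\alpha)$ to $g(x)$, which is the desired conclusion.

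The only mildly delicate point is the uniform integrability step, which is exactly where the trace class assumption reappears through Fernique's theorem; the spectral computation of $\Tr Q_\alpha$ is otherwise a routine application of dominated convergence.
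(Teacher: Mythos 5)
Your argument is correct and complete. Note, however, that the paper does not actually prove Proposition~\ref{prop:OU}: it is stated as a known property of the hypoelliptic Ornstein--Uhlenbeck semigroup, with the proof deferred to the cited monographs, so there is no in-text proof to compare against. Your route is the standard one and all the steps check out: the spectral computation $\Tr Q_\alpha = \sum_k (1-e^{2\alpha\lambda_k})/(2\abs{\lambda_k})$ is right, the domination by the summable sequence $1/(2\abs{\lambda_k})$ is exactly where the trace-class hypothesis on $C^{-1}$ enters, and dominated convergence gives $\E\norm{Y_\alpha}^2 = \Tr Q_\alpha \to 0$, hence $e^{\alpha C}x + Y_\alpha \to x$ in probability; combined with continuity of $g$ this gives convergence in probability of the integrands. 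The uniform-integrability step is also sound, and you correctly identify the one point that needs care: the bound $\E\norm{Y_\alpha}^{mp} \lesssim (\Tr Q_\alpha)^{mp/2}$ must hold with a constant depending only on the exponent, not on $\alpha$, which is precisely what the Gaussian moment comparison (Fernique/hypercontractivity) supplies. One could slightly streamline the last step by noting that, since $\Tr Q_\alpha \to 0$, the $L^p$ bound is not merely uniform but vanishing, but as written the argument is already a valid self-contained proof of the proposition.
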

\begin{cor}
  \label{cor:OU}
  Let $g \in C^1_m(H)$ and $x \in H$. Then $R_\alpha g$ is Fr\'echet
  differentiable with
  \[
    (R_\alpha g)'(x) \colon v \longmapsto \int_H g'\bigl( e^{\alpha C}x + y\bigr)
    e^{\alpha C}v \,\gamma_\alpha(dy)
  \]
  and $(R_\alpha g)'(x)v \to g'(x)v$ as $\alpha \to 0$ for every $v
  \in H$. Moreover, if $g'$ is G\^ateaux differentiable with
  \[
  \norm[\big]{\delta g'(x)}_{\cL_2(H^2;\erre)} \lesssim 1 +
  \norm{x}^m
  \]
  and $x \mapsto \delta g'(x)(v,w)$ is continuous for all $v,w
  \in H$, then $R_\alpha g$ is twice Fr\'echet differentiable with
  \[
    (R_\alpha g)''(x) \colon (v,w) \longmapsto
    \int_H \delta g'\bigl( e^{\alpha C}x + y\bigr)%
    \bigl( e^{\alpha C}v,e^{\alpha C}w \bigr)\,\,\gamma_\alpha(dy)
  \]
  and $(R_\alpha g)''(x)(v,w) \to \delta g'(x)(v,w)$ as $\alpha \to 0$
  for every $v,w \in H$.
\end{cor}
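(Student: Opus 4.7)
The plan is to differentiate $R_\alpha g(x)=\int_H g(e^{\alpha C}x+y)\,\gamma_\alpha(dy)$ under the integral sign and then identify the limit as $\alpha\to 0$ by an application of Proposition~\ref{prop:OU}. First I would establish the formula for $(R_\alpha g)'(x)v$ as a G\^ateaux derivative by a standard difference-quotient argument: the mean value theorem rewrites the quotient as $\int_0^1 g'(e^{\alpha C}x+st e^{\alpha C}v+y)\,e^{\alpha C}v\,ds$, and the polynomial growth of $g'$ together with the finiteness of all $\gamma_\alpha$-moments supplies a dominant independent of small $t$. Dominated convergence yields the claimed formula, and Fr\'echet differentiability is then automatic from Lemma~\ref{lm:OU}, which asserts that $R_\alpha g$ is of class $C^\infty$.

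For the pointwise limit $(R_\alpha g)'(x)v\to g'(x)v$, I would fix $v$, set $h(z):=g'(z)v$, and observe that $h\in C_m(H)$. Proposition~\ref{prop:OU} gives $R_\alpha h(x)\to g'(x)v$, whereas the residual
\[
(R_\alpha g)'(x)v - R_\alpha h(x) = \int_H g'(e^{\alpha C}x+y)\,(e^{\alpha C}-I)v\,\gamma_\alpha(dy)
\]
is dominated by $\|(e^{\alpha C}-I)v\|$ times $\int_H(1+\|e^{\alpha C}x+y\|^m)\,\gamma_\alpha(dy)$, an integral that stays bounded as $\alpha\to 0$. Since $(e^{\alpha C}-I)v\to 0$, this residual vanishes.

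The second derivative is handled in the same spirit. Under the additional hypotheses on $\delta g'$, one more round of dominated convergence applied to $v\mapsto(R_\alpha g)'(x)v$ yields the announced formula for $(R_\alpha g)''(x)(v,w)$, again upgraded to a bona fide Fr\'echet second derivative via Lemma~\ref{lm:OU}. For the pointwise limit, setting $H_{v,w}(z):=\delta g'(z)(v,w)$ produces an element of $C_m(H)$ by the continuity and growth hypotheses, so Proposition~\ref{prop:OU} delivers $R_\alpha H_{v,w}(x)\to\delta g'(x)(v,w)$, while the remainder
\[
\int_H \bigl[\delta g'(e^{\alpha C}x+y)(e^{\alpha C}v,e^{\alpha C}w)-\delta g'(e^{\alpha C}x+y)(v,w)\bigr]\,\gamma_\alpha(dy)
\]
is estimated through bilinearity of $\delta g'$ by $\|(e^{\alpha C}-I)v\|\,\|e^{\alpha C}w\|+\|v\|\,\|(e^{\alpha C}-I)w\|$ times a bounded integral, hence tends to zero.

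The main obstacle I anticipate is purely bookkeeping: each exchange of derivative and integral demands a $\gamma_\alpha$-integrable dominant uniform in the relevant small parameter, and the $\alpha\to 0$ step requires the $\gamma_\alpha$-moments of polynomials to stay bounded as $\alpha$ shrinks. Both are routine given the explicit form of $Q_\alpha$ and standard Gaussian estimates, so the only genuinely conceptual ingredient is the reduction to Proposition~\ref{prop:OU} via the auxiliary functions $h$ and $H_{v,w}$.
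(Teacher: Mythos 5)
Your proposal is correct and follows essentially the same route as the paper: difference quotients plus a mean-value/fundamental-theorem bound and dominated convergence (using the finite moments of $\gamma_\alpha$) for the derivative formulas, then the split $e^{\alpha C}v = v + (e^{\alpha C}-I)v$ to reduce the $\alpha\to 0$ limit to Proposition~\ref{prop:OU} applied to $z\mapsto g'(z)v$ and $z\mapsto\delta g'(z)(v,w)$, with the residual killed by strong continuity of $e^{\alpha C}$ and boundedness of the $m$-th moments. The only cosmetic difference is that your bilinearity decomposition of the second-order remainder uses two terms where the paper uses three, which changes nothing of substance.
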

\begin{proof}
  Let $\abs{t} \in \mathopen]0,1]$ and $x$, $v \in H$. One has
  \[
  \frac{R_\alpha g(x+tv) - R_\alpha g(x)}{t} = \int_H
  \frac{g\bigl(e^{\alpha C}(x+tv) + y\bigr) %
  - g\bigl(e^{\alpha C}x + y\bigr)}{t}\,\gamma_\alpha(dy),
  \]
  where, recalling that $g \in C^1(H)$,
  \[
  \lim_{t \to 0} \frac{g\bigl(e^{\alpha C}(x+tv) + y\bigr) %
  - g\bigl(e^{\alpha C}x + y\bigr)}{t} = g'\bigl(e^{\alpha C}x + y\bigr)
  e^{\alpha C} v.
  \]
  Defining the continuously differentiable function $\tilde{g}:[0,1]
  \to \erre$ as $\tilde{g}(t):=g\bigl(e^{\alpha C}(x+tv) + y\bigr)$,
  the mean value theorem yields the existence of $t_0 \in [0,t]$
  (dependent on $t$) such that
  \[
  g\bigl(e^{\alpha C}(x+tv) + y\bigr) - g\bigl(e^{\alpha C}x + y\bigr)
  = g'\bigl(e^{\alpha C}(x+t_0v) + y\bigr) \, t \, e^{\alpha C}v,
  \]
  from which it follows, recalling that $C$ is negative-definite,
  \[
  \abs[\big]{g\bigl(e^{\alpha C}(x+tv) + y\bigr) %
    - g\bigl(e^{\alpha C}x + y\bigr)} \lesssim \abs{t} \, \bigl( 1 +
  \norm{x}^m + \norm{v}^m + \norm{y}^m \bigr) \norm{v}.
  \]
  Since $\gamma_\alpha$ has finite moments of all (positive) orders,
  the dominated convergence theorem implies that
  \[
    (R_\alpha g)'(x) \colon v \longmapsto \int_H g'\bigl( e^{\alpha C}x
    + y\bigr) e^{\alpha C}v \,\gamma_\alpha(dy).
  \]
  Writing
  \begin{align*}
  \int_H g'\bigl( e^{\alpha C}x + y\bigr) e^{\alpha C}v \,\gamma_\alpha(dy)
  &= \int_H g'\bigl( e^{\alpha C}x + y\bigr) v \,\gamma_\alpha(dy)\\
  &\quad + \int_H g'\bigl( e^{\alpha C}x + y\bigr) (e^{\alpha C}v - v)
  \,\gamma_\alpha(dy),
  \end{align*}
  the first term on the right-hand side converges to $g'(x)v$ as
  $\alpha \to 0$ by proposition~\ref{prop:OU}. It suffices then to
  show that the second term on the right-hand side converges to zero
  as $\alpha \to 0$. One has, using once again that $C$ is
  negative-definite,
  \[
  \abs[\bigg]{\int_H g'\bigl( e^{\alpha C}x + y\bigr) (e^{\alpha C}v -
    v) \,\gamma_\alpha(dy)} \lesssim \norm[\big]{e^{\alpha C}v-v}
  \int_H \bigl( \norm{x}^m + \norm{y}^m \bigr)\,\gamma_\alpha(dy),
  \]
  where the right-hand side converges to zero because the semigroup
  $\alpha \mapsto e^{\alpha C}$ is strongly continuous and the $m$-th
  moment of $\gamma_\alpha$, as a function of $\alpha$, is bounded.

  Similarly, the G\^ateaux differentiability of $g'$ implies
  \[
  \lim_{t \to 0} \frac{g'\bigl(e^{\alpha C}(x+tw)+y\bigr)e^{\alpha C}v %
  - g'\bigl(e^{\alpha C}x+y\bigr)e^{\alpha C}v}{t} %
  = \delta g'\bigl(e^{\alpha C}x+y\bigr)(e^{\alpha C}v,e^{\alpha C}w)
  \]
  for all $x, y, v, w \in H$, and, defining the differentiable
  function $\phi:[0,1] \to \erre$ as
  \[
  \phi(t) := g'\bigl(e^{\alpha C}(x+tw)+y\bigr)e^{\alpha C}v,
  \]
  the fundamental theorem of calculus yields
  \begin{align*}
    &g'\bigl(e^{\alpha C}(x+tw)+y\bigr)e^{\alpha C}v %
    - g'\bigl(e^{\alpha C}x+y\bigr)e^{\alpha C}v\\
    &\hspace{3em} = \phi(t)-\phi(0) = \int_0^t \delta
    g'\bigl(e^{\alpha C}(x+sw)+y\bigr)(e^{\alpha C}v,e^{\alpha C}w)\,ds,
  \end{align*}
  hence, as $C$ is negative definite,
  \begin{align*}
    &\abs[\big]{g'\bigl(e^{\alpha C}(x+tw)+y\bigr)e^{\alpha C}v %
      - g'\bigl(e^{\alpha C}x+y\bigr)e^{\alpha C}v}\\
    &\hspace{3em} \lesssim \abs{t} \, \bigl( 1 + \norm{x}^m +
    \norm{w}^m + \norm{y}^m \bigr) \norm{v} \, \norm{w}.
  \end{align*}
  The dominated convergence theorem then implies
  \begin{align*}
  (R_\alpha g)''(x)(v,w) %
  &= \lim_{t \to 0} \frac{(R_\alpha g)'(x+tw)v - (R_\alpha g)'(x)v}{t}\\
  &= \int_H \delta g'\bigl( e^{\alpha C}x + y\bigr)%
    \bigl( e^{\alpha C}v,e^{\alpha C}w \bigr)\,\,\gamma_\alpha(dy).
  \end{align*}
  Furthermore, writing $e^{\alpha C}v=v+e^{\alpha C}v-v$ and similarly
  for $e^{\alpha C}w$, it follows by bilinearity of $\delta g'$ that
  \begin{align*}
  \delta g'\bigl( e^{\alpha C}x + y\bigr)(e^{\alpha C}v,e^{\alpha C}w)
  &= \delta g'\bigl( e^{\alpha C}x + y\bigr)(v,w)\\
  &\quad + \delta g'\bigl( e^{\alpha C}x + y\bigr)(v,e^{\alpha C}w-w)\\
  &\quad + \delta g'\bigl( e^{\alpha C}x + y\bigr)(e^{\alpha C}v-v,w)\\
  &\quad + \delta g'\bigl( e^{\alpha C}x + y\bigr)(e^{\alpha C}v-v,e^{\alpha C}w-w).
  \end{align*}
  Since $x \mapsto \delta g'(x)(v,w) \in C_m(H)$ for all $v,w \in H$,
  proposition~\ref{prop:OU} implies
  \[
  \int_H \delta g'\bigl( e^{\alpha C}x + y\bigr)(v,w)
  \,\gamma_\alpha(dy) \longto \delta g'(x)(v,w)
  \]
  as $\alpha \to 0$. It remains to show that the integrals over
  $H$ with respect to $\gamma_\alpha$ of the other three terms on the
  right-hand side of the previous identity converge to zero as $\alpha
  \to 0$. It follows by the polynomial boundedness of $\delta g'$ and
  the contractivity of $e^{\alpha C}$ that
  \begin{align*}
    &\abs[\bigg]{\int_H \delta g'\bigl( e^{\alpha C}x + y\bigr)%
      (e^{\alpha C}v-v,e^{\alpha C}w-w) \,\gamma_\alpha(dy)}\\
    &\hspace{3em} \lesssim \norm[\big]{e^{\alpha C}v-v} \,
    \norm[\big]{e^{\alpha C}w-w} \int_H \bigl( 1 + \norm{x}^m +
    \norm{y}^m \bigr) \,\gamma_\alpha(dy),
  \end{align*}
  where the right-hand side converges to zero as $\alpha \to 0$
  because, as before, $e^{\alpha C}$ is strongly continuous and the
  $m$-th moment of $\gamma_\alpha$ is bounded with respect to
  $\alpha$. The remaining two terms can be treated in a completely
  similar way.
\end{proof}

\subsection{A differentiability result}
The following result is most likely well known, but since we could not
locate a proof in the literature, we include a proof for the
convenience of the reader.
\begin{lemma}
  \label{lm:gigi}
  Let $E=L^2(X,\mathscr{A},m)$, $g \in C^2(\erre)$ with $g(0)=g'(0)=0$ and
  $g''\in C_b(\erre)$, and $G:E \to \erre$ be defined as
  \[
  G: u \longmapsto \int_X g(u) \,dm.
  \]
  Then $G$ is continuously Fr\'echet differentiable on $E$, with
  Fr\'echet derivative at $u \in E$ given by
  \[
    G'(u): v \longmapsto \ip{g'(u)}{v} = \int_X g'(u)v \,dm.
  \]
  Moreover, $G'$ is G\^ateaux differentiable on $E$, with G\^ateaux
  derivative at $u \in E$ given by
  \[
    \delta G'(u): (v,w) \longmapsto \ip{g''(u)}{vw}
    = \int_X g''(u)vw \,dm,
  \]
  and $(u,v,w) \mapsto \delta G'(u)(v,w)$ is continuous.
\end{lemma}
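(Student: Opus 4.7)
The plan is to exploit the two conditions $g(0) = g'(0) = 0$ together with the boundedness of $g''$. Taylor's theorem with integral remainder yields the pointwise bounds
\[
\abs{g(x)} \leq \tfrac{1}{2}\norm{g''}_\infty x^2,
\qquad
\abs{g'(x)} \leq \norm{g''}_\infty\abs{x},
\qquad
\abs{g'(x)-g'(y)} \leq \norm{g''}_\infty\abs{x-y},
\]
which will be used repeatedly. In particular, for $u \in E$ one has $g(u) \in L^1(X,m)$ and $g'(u) \in L^2(X,m)$, so $G(u)$ is well defined and $v \mapsto \int_X g'(u)v\,dm$ is a bounded linear functional on $E$ of norm at most $\norm{g''}_\infty \norm{u}_E$.

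First I would establish Fr\'echet differentiability directly from the second-order Taylor identity
\[
g(u+v) - g(u) - g'(u)v = v^2 \int_0^1 (1-t)\,g''(u+tv)\,dt,
\]
whose right-hand side is bounded in modulus by $\tfrac12\norm{g''}_\infty v^2$. Integrating over $X$ gives $\abs{G(u+v)-G(u)-\int_X g'(u)v\,dm} \leq \tfrac12\norm{g''}_\infty \norm{v}_E^2$, proving that $G$ is Fr\'echet differentiable at every $u$ with the claimed derivative. Continuity of $u \mapsto G'(u)$ from $E$ to $E^*$ then follows from the Lipschitz estimate $\norm{g'(u)-g'(\tilde u)}_{L^2(m)} \leq \norm{g''}_\infty \norm{u-\tilde u}_E$.

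For the G\^ateaux differentiability of $G'$ at $u$ in the direction $w$, I would apply dominated convergence to the incremental quotient $\int_X \bigl(g'(u+tw)-g'(u)\bigr)t^{-1}v\,dm$: the integrand converges pointwise to $g''(u)vw$, and is dominated by the integrable function $\norm{g''}_\infty\abs{vw}$ by the mean value theorem and Cauchy-Schwarz. This yields the formula $\delta G'(u)(v,w) = \int_X g''(u)vw\,dm$.

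The subtlest point, which I expect to be the main obstacle, is the joint continuity of $(u,v,w) \mapsto \delta G'(u)(v,w)$. Given $(u_n,v_n,w_n)\to(u,v,w)$ in $E^3$, I would split
\[
\int_X g''(u_n)v_n w_n\,dm - \int_X g''(u)vw\,dm
= \int_X g''(u_n)(v_n w_n - vw)\,dm + \int_X \bigl(g''(u_n)-g''(u)\bigr)vw\,dm.
\]
The first summand is controlled by $\norm{g''}_\infty\norm{v_n w_n - vw}_{L^1(m)}$, which vanishes because $v_n w_n \to vw$ in $L^1(m)$ by Cauchy-Schwarz and the $L^2$ convergence of $v_n$ and $w_n$. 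For the second summand, since $g''$ is only continuous and bounded (no Lipschitz regularity), I would use that $u_n \to u$ in $L^2(m)$ implies convergence in measure, hence $g''(u_n) \to g''(u)$ in measure by continuity of $g''$. A standard double-subsequence argument then applies: from any subsequence I can extract a sub-subsequence along which $u_{n_{k_j}} \to u$ almost everywhere, whence $g''(u_{n_{k_j}}) \to g''(u)$ almost everywhere, and the dominated convergence theorem with dominating function $2\norm{g''}_\infty\abs{vw} \in L^1(m)$ forces the corresponding subsequence of integrals to tend to zero; since every subsequence admits such a sub-subsequence, the whole sequence converges to zero, completing the argument.
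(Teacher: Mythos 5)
Your argument is essentially the paper's: the same Taylor/Lipschitz bounds on $g$ and $g'$, the same identification of $G'(u)$ with $g'(u)\in E$ for the continuity of $G'$, and, for the joint continuity of $\delta G'$, the same passage through convergence in $m$-measure followed by domination by $\norm{g''}_\infty\abs{vw}\in L^1(m)$ --- your double-subsequence extraction merely makes explicit the version of dominated convergence that the paper invokes implicitly, and your second-order Taylor identity in the Fr\'echet step is an equivalent variant of the paper's first-order expansion combined with the Lipschitz continuity of $g'$.

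The one step you should tighten is the G\^ateaux differentiability of $G'$. As written, you only show that, for each \emph{fixed} $v$, the scalar quantity $t^{-1}\bigl(G'(u+tw)v-G'(u)v\bigr)$ converges to $\int_X g''(u)vw\,dm$; this is weak-* convergence of the difference quotients in $E^*$, whereas G\^ateaux differentiability of the map $G'\colon E\to\cL(E,\erre)$ requires convergence in the operator norm. The fix costs nothing beyond the tools you already use: by the Cauchy--Schwarz inequality, the $\cL(E,\erre)$-norm of $v\mapsto t^{-1}\bigl(G'(u+tw)v-G'(u)v\bigr)-\ip{g''(u)w}{v}$ is bounded by
\[
\norm[\Big]{\frac{g'(u+tw)-g'(u)}{t}-g''(u)w}_{L^2(m)},
\]
and this tends to zero as $t\to 0$ by dominated convergence, the integrand of its square being dominated by $4\norm{g''}_\infty^2\abs{w}^2\in L^1(m)$. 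This is precisely how the paper argues at that point. Everything else in your proposal is correct.
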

\begin{proof}
  Let us first show that $G$, as well as the (for now just formal)
  expressions for $G'$ and $\delta G'$ are well defined: since
  $g'(0)=0$ and $g''$ is bounded, the fundamental theorem of calculus
  yields $\abs{g'(x)} \lesssim \abs{x}$ for all $x \in \erre$, hence
  also, as $g(0)=0$, $\abs{g(x)} \lesssim \abs{x}^2$. This immediately
  implies that the integral defining $G(u)$ is finite for every
  $u \in E$. Similarly, in view of the Cauchy-Schwarz inequality, the
  linear bound on $g'$ implies that $G'(u)$ is a continuous linear map
  from $E$ to $\erre$ for every $u \in E$, and the boundedness of
  $g''$ implies that $\delta G'(u)$ is a continuous bilinear map from
  $E \times E$ to $\erre$ for every $u \in E$.

  In order to show the Fr\'echet differentiability of $G$ at $u$,
  note that, for any $v \in E$, one has
  \[
    G(u+v) - G(u) - \ip{g'(u)}{v}
    = \ip[\bigg]{v}{\int_0^1 \bigl(g'(u+tv) - g'(u)\bigr)\,dt},
  \]
  hence
  \[
    \norm[\big]{G(u+v) - G(u) - \ip{g'(u)}{v}} \leq \norm{v} \,
    \norm[\bigg]{\int_0^1 \bigl(g'(u+tv) - g'(u)\bigr)\,dt}.
  \]
  Taking into account that $g''$ is bounded, elementary calculus shows
  that the second term on the right-hand side is bounded by
  $\norm{g''}_{C(\erre)} \, \norm{v}$, so that
  \[
    \frac{\norm{G(u+v) - G(u) - \ip{g'(u)}{v}}}{\norm{v}} \longrightarrow 0
  \]
  as $v \to 0$ in $E$, i.e. the Fr\'echet derivative of $G$ at
  $u \in E$ is indeed the linear map $G'(u): v \mapsto
  \ip{g'(u)}{v}$. Let us prove its continuity: by the isomorphism
  $\cL(E,\erre)=E' \simeq E$, it is enough to show that, for any
  sequence $(u_n) \subset E$ converging to $u$ in $E$, $g'(u_n)$
  converges to $g'(u)$ in $E$. But this follows immediately, in
  analogy to an argument already used, by the Lipschitz continuity of
  $g'$. We have thus established that $G \in C^1(E)$.
  To prove that $G'$ is G\^ateaux differentiable, let us write, for
  any $u$, $v$, $w \in E$ and $t>0$,
  \[
    \frac{G'(u+tv)w - G'(u)w}{t} - \ip{g''(u)}{vw} = \int_X \Bigl(
    \frac{g'(u+tv) - g'(u)}{t} - g''(u)v \Bigr) w \,dm,
  \]
  from which it follows, by the Cauchy-Schwarz inequality,
  \[
    \norm[\bigg]{w \mapsto \frac{G'(u+tv)w - G'(u)w}{t} -
      \ip{g''(u)}{vw}}_{\cL(E,\erre)}
    \leq \int_X \abs[\Big]{\frac{g'(u+tv) - g'(u)}{t} - g''(u)v}^2\,dm.
  \]
  Since $g'' \in C_b(\erre)$, hence $g'$ is Lipschitz continuous, the
  integrand on the right-hand side is bounded above by $\abs{v}^2$,
  modulo a multiplicative constant, and $v \in L^2(m)$. Therefore the
  dominated convergence theorem immediately yields that $G'$ is
  G\^ateaux differentiable on $E$ with G\^ateaux derivative $\delta
  G'(u): (v,w) \mapsto \ip{g''(u)}{vw}$. Let $u_n \to u$, $v_n \to v$,
  and $w_n \to w$ in $H$ as $n \to \infty$. Then $u_n \to u$ in
  $m$-measure, and $g''(u_n) \to g''(u)$ by the continuous mapping
  theorem. Setting $\bar{v}_n:=g''(u_n)v_n$ and $\bar{v}=g''(u)v$,
  this implies that $\bar{v}_n \to v$ in $L^2(m)$, hence in
  $m$-measure. In fact,
  \[
  \norm[\big]{\bar{v}_n - v}_{L^2(m)} \leq
  \norm[\big]{g''(u_n)v_n - g''(u_n)v}_{L^2(m)}
  + \norm[\big]{g''(u_n)v - g''(u)v}_{L^2(m)},
  \]
  where
  \[
  \norm[\big]{g''(u_n)v_n - g''(u_n)v}_{L^2(m)}
  \leq \norm[\big]{g''}_{C(X)} \norm[\big]{v_n - v}_{L^2(m)} \to 0,
  \]
  and $g''(u_n)v - g''(u)v \to 0$ in $m$-measure and $\abs{g''(u_n)v -
    g''(u)v} \leq \norm{g''}_{C(X)} \abs{v} \in L^2(m)$, hence
  \[
  \norm[\big]{g''(u_n)v - g''(u)v}_{L^2(m)} \to 0
  \]
  by the dominated convergence theorem. Then we have
  \[
  \abs[\big]{\delta G'(u_n)(v_n,w_n) - \delta G'(u)(v,w)}
  = \abs[\big]{\ip{\bar{v}_n}{w_n} - \ip{\bar{v}}{w}} \to 0,
  \]
  thus completing the proof.
\end{proof}
\begin{rmk}
  The mapping $G$ is \emph{never} twice Fr\'echet differentiable,
  unless $g(x)=ax^2$ for some constant $a \in \erre$. In other words,
  $G$ is twice Fr\'echet differentiable if and only if it is
  proportional to the square of the norm. This is an immediate
  consequence of the fact that the superposition operator on $L^2(m)$
  associated to a function $\phi \in C^\infty(\erre)$ is Fr\'echet
  differentiable if and only if $\phi$ is linear. In fact, identifying
  $\cL(E,\erre)$ with $E$, the derivative $G'$ can be identified with
  the superposition operator on $E$ associated to the function
  $g':\erre \to \erre$.
\end{rmk}

\subsection{Approximation and convergence in locally Lipschitzian
  SPDEs}
Consider the stochastic evolution equation on the Hilbert space $H$
\begin{equation}
  \label{eq:cozza}
du + Au\,dt + f(u)\,dt = B(u)\,dW, \qquad u(0)=u_0,
\end{equation}
where $A$ is a linear maximal monotone operator on $A$, $u_0$ is an
$\cF_0$-measurable $H$-valued random variable, and $f$, $B$ satisfy
the same measurability conditions as in (b) of \S\ref{sec:main} below.

For any Banach space $E$ and any sequence of maps $(F_n)$,
$F_n: \Omega \times \erre_+ \times H \to E$, we shall say that $(F_n)$
is uniformly locally Lipschitz continuous if for every
$R \in \erre_+$ there exists a constant $N=N(R)$, independent of $n$,
such that
\begin{equation}
  \label{eq:loli}
  \norm[\big]{F_n(\omega,t,x) - F_n(\omega,t,y)}_E \leq N \norm{x-y}
  \qquad \forall x, y \in B_R(H)
\end{equation}
and there exists $a \in H$ such that
$(\omega,t,n) \mapsto F_n(\omega,t,a)$ is bounded. If the sequence
$(F_n)$ reduces to a singleton $F$, we say that $F$ is locally
Lipschitz continuous.

If $f: \Omega \times \erre_+ \times H \to H$ and
$B: \Omega \times \erre_+ \times H \to \cL^2(U,H)$ are locally
Lipschitz continuous, there exists a unique local mild solution $u$ to
\eqref{eq:cozza} with lifetime $T$ (i.e., $T$ is a stopping time such
that the norm of $u(t)$ tends to infinity as $t$ tends to $T$ from the
left).
Furthermore, let $(f_n)$ and $(B_n)$ be sequences of locally Lipschitz
continuous maps with the same domains and codomains of $f$ and $B$,
respectively, and $(u_{0n})$ a sequence of $\cF_0$-measurable
$H$-valued random variables.  Let $u_n$, with lifetime $T_n$, be the
unique local mild solution to the equation obtained replacing $f$,
$B$, and $u_0$ in \eqref{eq:cozza} with $f_n$, $B_n$, and $u_{0n}$,
respectively.
One has the following convergence result, proved (in a more general
setting) in \cite{KvN2}.
\begin{thm}
  \label{thm:cozza}
  Assume that
  \begin{itemize}
  \item[\emph{(i)}] $(f_n)$ and $(B_n)$ are uniformly locally
    Lipschitz continuous;
  \item[\emph{(ii)}] $f_n$ and $B_n$ converge pointwise to $f$ and
    $B$, respectively, as $n \to \infty$;
  \item[\emph{(iii)}] $u_{0n} \to u_0$ in probability as $n \to \infty$.
  \end{itemize}
  Then
  \[
    u_n \ind{[\![0,T \wedge T_n]\!]} \longrightarrow
    u \ind{[\![0,T]\!]}
  \]
  in $L^0(\Omega \times \erre_+;H)$ as $n \to \infty$.
\end{thm}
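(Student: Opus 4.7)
The plan is a truncation/stopping-time argument that reduces matters to the globally Lipschitz setting, in which a Gronwall estimate on the mild formulation yields convergence. The first step is to truncate: fix $R>0$ and let $\psi_R \colon H \to [0,1]$ be a Lipschitz cutoff equal to $1$ on $B_R(H)$ and supported in $B_{2R}(H)$, and set $f^R := \psi_R f$, $B^R := \psi_R B$, $f_n^R := \psi_R f_n$, $B_n^R := \psi_R B_n$. Hypothesis (i) together with the anchoring point $a$ implies that these truncated coefficients are globally Lipschitz with a constant that depends on $R$ but \emph{not} on $n$, and are uniformly bounded in $n$. Standard fixed-point arguments then produce global mild solutions $u^R$ and $u_n^R$ of \eqref{eq:cozza} with the truncated data and initial values $u_0$, $u_{0n}$ respectively.

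Next I would prove $u_n^R \to u^R$ in probability, uniformly on $[0,t]$, for every $t>0$. Writing down the mild formulations, subtracting, and splitting each difference as
\[
f_n^R(u_n^R) - f^R(u^R) = \bigl(f_n^R(u_n^R) - f_n^R(u^R)\bigr) + \bigl(f_n^R(u^R) - f^R(u^R)\bigr),
\]
and analogously for $B$, one controls the first summand by the uniform Lipschitz constant of the truncated coefficients and the second by the residual
\[
\varepsilon_n(R,t) := \E \int_0^t \Bigl( \norm[\big]{(f_n^R - f^R)(u^R(s))}^2 + \norm[\big]{(B_n^R - B^R)(u^R(s))}_{\cL^2(U,H)}^2 \Bigr) ds,
\]
which tends to zero by hypothesis (ii), the uniform boundedness of the truncated coefficients, and dominated convergence. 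Combining this with hypothesis (iii), the contractivity of the semigroup generated by $-A$, the It\^o isometry, and Gronwall's lemma yields the claimed convergence (a subsequence argument handles the upgrade from convergence in probability of $u_{0n}$).

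Finally I would glue back to the local equation. Set $\tau_R := \inf\{t \colon \norm{u(t)} \geq R\}$ and define $\tau_{n,R}$ analogously; pathwise uniqueness of mild solutions gives $u = u^R$ on $\cc{0}{\tau_R}$ and $u_n = u_n^R$ on $\cc{0}{\tau_{n,R}}$, while the definition of the lifetime forces $\tau_R \nearrow T$ almost surely as $R \to \infty$. The convergence of $u_n^R$ to $u^R$ established in the previous step then yields $\liminf_n \tau_{n,R'} \geq \tau_R$ in probability whenever $R'>R$, so that $T \wedge T_n$ cannot collapse below $T$ in probability. A routine union-bound combining the three convergences, followed by a diagonal procedure in $t$, produces the required $L^0(\Omega \times \erre_+;H)$-convergence. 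The hard part is this last step: turning \emph{pointwise} convergence of the coefficients into convergence of the \emph{lifetimes}. A priori $T_n$ could collapse on a set of positive probability if the $f_n,B_n$ were much wilder than $f,B$ outside some ball, and it is precisely the \emph{uniformity} in $n$ of the local Lipschitz constant in (i) that prevents this and allows the comparison $\tau_{n,R} \approx \tau_R$ to be propagated as $R \to \infty$.
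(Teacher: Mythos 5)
The paper offers no proof of this theorem---it is imported from \cite{KvN2}---and the argument given there is precisely the truncation--localization scheme you outline: cut the coefficients off outside a ball so that hypothesis (i) yields an $n$-independent global Lipschitz constant, prove convergence of the truncated solutions via a Gronwall estimate on the mild formulations, and then compare exit times from balls to control the lifetimes $T_n$. Your sketch is therefore correct and follows essentially the same route; the only step that needs more than the passing mention you give it is the mismatch between the $L^2$-Gronwall estimate and convergence of $u_{0n}$ in probability only, which is resolved not by subsequence extraction alone but by additionally localizing on $\cF_0$-measurable events where the initial data are uniformly bounded and close, exploiting that the solution map of the truncated equation is local over $\cF_0$.
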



\section{Main result}
\label{sec:main}
The following hypotheses on the data of \eqref{eq:0} will be in force
throughout this section:
\begin{itemize}
\item[(a)] $A$ is the generator of a positive $C_0$-semigroup $S$ on
  $H$.
\item[(b)] The mappings $f: \Omega \times \erre_+ \times H \to H$ and
  $B: \Omega \times \erre_+ \times H \to \cL^2(U,H)$ are measurable
  and adapted, and there exist constants $C_f$ and $C_B$ such that
\begin{align*}
  \norm[\big]{f(\omega,t,h_1)-f(\omega,t,h_2)}
  &\leq C_f \norm{h_1-h_2},\\
  \norm[\big]{f(\omega,t,h)}
  &\leq C_f \bigl( 1 + \norm{h} \bigr),\\
  \norm[\big]{B(\omega,t,h_1) - B(\omega,t,h_2)}_{\cL^2(U,H)}
  &\leq C_B \norm{h_1-h_2},\\
  \norm[\big]{B(\omega,t,h)}_{\cL^2(U,H)}
  &\leq C_B \bigl( 1 + \norm{h} \bigr)
\end{align*}
for all $\omega \in \Omega$, $t \in \erre_+$, and $h,h_1,h_2 \in H$.
\item[(c)] One has, for every $\omega \in \Omega$, $t \in \erre_+$ and
  $h \in H$,
\[
  - \ip[\big]{h^-}{f(\omega,t,h)} + 
  \norm[\big]{\ind{\{h \leq 0\}}B(\omega,t,h)}^2_{\cL^2(U,H)} \lesssim
  \norm[\big]{h^-}^2
\]
\end{itemize}

By classical results on the well-posedness in the mild sense of
stochastic evolution equations, it is known that, assuming $u_0 \in
L^p(\Omega,\cF_0,\P)$ for some $p \geq 0$, for any $T>0$ there exists
a unique measurable, adapted, continuous (hence predictable) process
\[
u \in L^p(\Omega;C([0,T];H)
\]
such that
\begin{gather*}
  s \mapsto S(t-s)f(s,u(s)) \in L^1(0,t;H),\\
  s \mapsto S(t-s)B(s,u(s)) \in L^2(0,t;\cL^2(U,H)),
\end{gather*}
and
\[
u(t) = S(t)u_0 + \int_0^t S(t-s)f(s,u(s))\,ds + \int_0^t
S(t-s)B(s,u(s))\,dW(s)
\]
$\P$-a.s. for all $t \in [0,T]$.

\medskip

We can now formulate the main result of this section.
\begin{thm}
\label{thm:pos}
Let $u$ be a mild solution to \eqref{eq:0}. If $u_0$ is positive, then
$u(t)$ is positive for all $t \in \erre_+$.
\end{thm}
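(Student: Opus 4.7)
The plan is to prove the maximum principle by showing that $\E\norm{u(t)^-}^2=0$ for all $t\geq 0$, via an It\^o-type formula applied to a smoothing of the (non-smooth) functional $\Phi\colon h\mapsto\norm{h^-}^2$, closing the resulting estimate by Gronwall's lemma and $\norm{u_0^-}=0$. Since $\Phi$ is not $C^2$ Fr\'echet, a two-level regularization of $\Phi$ together with a Yosida regularization of $A$ is needed.

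First, approximate $x\mapsto(x^-)^2$ by convex $g_\epsilon\in C^2(\erre)$ with $g_\epsilon(0)=g_\epsilon'(0)=0$ and $g_\epsilon''\in C_b(\erre)$ nonnegative, converging pointwise (with $\epsilon$-uniform domination) to $(x^-)^2$, $-2x^-$, and $2\ind{\{x\leq 0\}}$ respectively. Reading $H$ as a weighted $L^2(X,m)$ (as in the application of \S\ref{sec:HJM}), Lemma~\ref{lm:gigi} promotes $G_\epsilon(u):=\int_X g_\epsilon(u)\,dm$ to $C^1(H)$ with continuously G\^ateaux differentiable Fr\'echet derivative. Since the Remark above precludes $C^2$ Fr\'echet regularity for $G_\epsilon$, smooth it further by the hypoelliptic Ornstein--Uhlenbeck semigroup of \S\ref{ssec:OU}: the function $G_{\epsilon,\alpha}:=R_\alpha G_\epsilon$ is of class $C^\infty$ by Lemma~\ref{lm:OU}, and Corollary~\ref{cor:OU} provides pointwise convergence (with polynomial domination) of $(R_\alpha G_\epsilon)'$ and $(R_\alpha G_\epsilon)''$ to $G_\epsilon'$ and $\delta G_\epsilon'$ as $\alpha\to 0$. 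Finally, replace $A$ by its Yosida regularization $A_\lambda:=A(I+\lambda A)^{-1}$: positivity of $S$ entails positivity of $J_\lambda:=(I+\lambda A)^{-1}$, hence $-A_\lambda$ generates a positive uniformly continuous semigroup, and standard resolvent convergence yields that the corresponding strong solution $u^\lambda$ converges to $u$ in probability.

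Apply the classical It\^o formula to $G_{\epsilon,\alpha}(u^\lambda(t))$, take expectation after localization, and pass $\alpha\to 0$ and $\epsilon\to 0$ using Corollary~\ref{cor:OU}, Lemma~\ref{lm:gigi}, dominated convergence, and the moment estimates on $u^\lambda$ supplied by hypothesis~(b). The resulting identity
\[
\E\norm{u^\lambda(t)^-}^2
= \E\int_0^t \Bigl(-2\ip{u^\lambda(s)^-}{-A_\lambda u^\lambda(s)+f(s,u^\lambda(s))}
 + \norm[\big]{\ind{\{u^\lambda(s)\leq 0\}}B(s,u^\lambda(s))}^2_{\cL^2(U,H)}\Bigr)\,ds
\]
combines with two uniform-in-$\lambda$ bounds: on the one hand, $-A_\lambda u=\lambda^{-1}(J_\lambda u-u)$, $\ip{u^-}{u}=-\norm{u^-}^2$, and positivity of $J_\lambda$ (with $\norm{J_\lambda}\leq 1+O(\lambda)$) yield the dispersive estimate $-\ip{u^\lambda(s)^-}{-A_\lambda u^\lambda(s)}\leq C\norm{u^\lambda(s)^-}^2$; on the other hand, hypothesis~(c) controls the remaining drift--diffusion contribution by a constant times $\norm{u^\lambda(s)^-}^2$. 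Gronwall's lemma then forces $\E\norm{u^\lambda(t)^-}^2=0$, so $u^\lambda(t)\geq 0$ a.s., and passing $\lambda\to 0$ yields $u(t)\geq 0$ a.s. The main technical difficulty lies in the careful control of the triple limit in the It\^o identity---most delicately in the trace term, where the Hilbert--Schmidt growth of $B$ from hypothesis~(b), the second-derivative convergence from Corollary~\ref{cor:OU}, and dominated convergence with respect to the Gaussian measure $\gamma_\alpha$ must be orchestrated simultaneously.
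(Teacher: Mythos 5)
Your proposal follows essentially the same route as the paper: a two-stage regularization of $h\mapsto\norm{h^-}^2$ (a $C^2$ approximation of $(x^-)^2$ combined with the Ornstein--Uhlenbeck smoothing of \S\ref{ssec:OU}), It\^o's formula applied to the Yosida-regularized equation, positivity of the resolvent to control the $A_\lambda$ term, hypothesis (c), and Gronwall. The only step you omit is the final reduction from a general positive $u_0$ to $u_0\in L^2(\P)$ (needed so that $t\mapsto\E\norm{u^-(t)}^2$ is finite and Gronwall actually applies), which the paper handles by truncating $u_0$ and invoking Theorem~\ref{thm:cozza}.
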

\begin{proof}
  As a first step, we assume that $A$ is a bounded operator, so that
  $u$ is in fact a strong solution, in particular a semimartingale,
  that $\ip{Ah}{h^-} \leq 0$ for every $h \in H$, and that
  $\E\norm{u_0}^2<\infty$.
  Let $g$ and $G$ be as in lemma~\ref{lm:gigi}, and
  $G_\alpha := R_\alpha G$, where $(R_\alpha)$ is the hypoelliptic
  Ornstein-Uhlenbeck semigroup introduced in
  \S\ref{ssec:OU}. Recalling that $G_\alpha \in C^\infty(H)$ by
  lemma~\ref{lm:OU}, It\^o's formula yields,
  \begin{equation}
    \label{eq:Ito}
  \begin{split}
    G_\alpha(u) + \int_0^\cdot \ip{Au}{G'_\alpha(u)}\,ds
    &= G_\alpha(u_0)
       + \int_0^\cdot \ip{f(u)}{G'_\alpha(u)}\,ds
       + \int_0^\cdot G_\alpha'(u) B(u) \,dW\\
    &\quad + \frac12 \int_0^\cdot \Tr_{B(u)} G_\alpha''(u)\,ds.
    \end{split}
  \end{equation}
  We are going to pass to the limit as $\alpha \to 0$ in each term of
  this identity. Proposition~\ref{prop:OU} implies that
  \begin{alignat*}{3}
    G_\alpha(u_0) &\longto G(u_0),\\
    G_\alpha(u(t)) &\longto G(u(t)) &\quad \forall t>0.
  \end{alignat*}
  Moreover, $G'_\alpha(u(s)) \to G'(u(s))$ weakly for every $s \geq 0$
  by corollary~\ref{cor:OU}, hence
  \begin{align*}
    \ip[\big]{Au(s)}{G'_\alpha(u(s))}
    &\longto \ip[\big]{Au(s)}{G'(u(s))}\\
    \ip[\big]{f(u(s))}{G'_\alpha(u(s))}
    &\longto \ip[\big]{f(u(s))}{G'(u(s))}
  \end{align*}
  for every $s \geq 0$. The boundedness of $g''$ implies that $G'$
  grows at most linearly, therefore, recalling that $C$ is
  negative-definite,
  \[
    \norm{G'_\alpha(u(s))} \leq \int_H \norm[\big]{G'(e^{\alpha C}
      u(s)+y)}\,\gamma_\alpha(dy)
    \lesssim \norm{u(s)} + \int_H \norm{y}\,\gamma_\alpha(dy).
  \]
  Since the last term on the right-hand side is finite for every
  $\alpha>0$, $A$ is bounded, and $f$ also grows at most linearly, one
  has
  \[
    \abs[\big]{\ip[\big]{Au(s)}{G'_\alpha(u(s))}}
    + \abs[\big]{\ip[\big]{f(u(s))}{G'_\alpha(u(s))}} \lesssim_\alpha
    1 + \norm{u(s)}^2.
  \]
  As $u$ is pathwise continuous, the dominated convergence theorem
  yields, for every $t \geq 0$,
  \begin{align*}
    \int_0^t \ip[\big]{Au(s)}{G'_\alpha(u(s))}\,ds 
    &\longto \int_0^t \ip[\big]{Au(s)}{G'(u(s))}\,ds\\
    \int_0^t \ip[\big]{f(u(s))}{G'_\alpha(u(s))}\,ds
    &\longto \int_0^t \ip[\big]{f(u(s))}{G'(u(s))}\,ds.
  \end{align*}
  In order to have
  \[
    \int_0^t G'_\alpha(u(s)) B(u(s))\,dW(s) \longto
    \int_0^t G'(u(s)) B(u(s))\,dW(s)
  \]
  in probability, it suffices to show that
  \[
    \int_0^t
    \norm[\big]{G'_\alpha(u)B(u)-G'(u)B(u)}^2_{\cL^2(U,\erre)}\,ds \longto 0
  \]
  in probability. Since $G'_\alpha(h) \to G'(h)$ weakly for every
  $h \in H$, i.e. the convergence takes place in $\cL(H,\erre)$, ideal
  properties of the space of Hilbert-Schmidt operators imply that
  \[
    \norm[\big]{G'_\alpha(u(s))B(u(s))-G'(u(s))B(u(s))}^2_{\cL^2(U,\erre)}
    \longto 0
  \]
  for every $s \in [0,t]$. We can thus conclude once again by the
  dominated convergence theorem, as
  \begin{align*}
    \norm[\big]{G'_\alpha(u)B(u)-G'(u)B(u)}_{\cL^2(U,\erre)}
    &\leq \norm[\big]{G'_\alpha(u)-G'(u)}
      \norm[\big]{B(u)}_{\cL^2(U,H)}\\
    &\lesssim_\alpha 1 + \norm{u}^2.
  \end{align*}
  Finally, denoting a complete orthonormal basis of $U$ by $(e_k)$,
  corollary~\ref{cor:OU} also implies
  \[
    G''_\alpha(u(s))\bigl(B(u(s))e_k,B(u(s))e_k\bigr)
    \longto \delta G'(u(s))\bigl(B(u(s))e_k,B(u(s))e_k\bigr)
  \]
  for every $s \in [0,t]$ and $k \in \enne$. Moreover, since $g''$ is
  bounded and $C$ is negative-definite, lemma~\ref{lm:gigi} implies
  \begin{align*}
    \abs[\big]{G''_\alpha(u)(v,w)}
    &= \abs[\bigg]{\int_H \delta G'(e^{\alpha C}u + y)
      (e^{\alpha C}v,e^{\alpha C}w) \, \gamma_\alpha(dy)}\\
    &\leq \int_H \norm{g''}_{C(\erre)} \norm{v} \, \norm{w}\, \gamma_\alpha(dy)
      \lesssim \norm{v} \, \norm{w}
  \end{align*}
  for every $v$, $w \in H$. Therefore, since $B(u(s)) \in \cL^2(U,H)$,
  the dominated convergence theorem yields
  \[
    \int_0^t \Tr_{B(u)} G''_\alpha(u)\,ds \longto \int_0^t \Tr_{B(u)}
    \delta G'(u)\,ds
  \]
  for every $t \geq 0$. We have thus shown that
  \begin{equation}
    \label{eq:ItoG}
    \begin{split}
    G(u) + \int_0^\cdot \ip{Au}{G'(u)}\,ds
    &= G(u_0)
      + \int_0^\cdot \ip{f(u)}{G'(u)}\,ds
      + \int_0^\cdot G'(u) B(u) \,dW\\
    &\quad + \frac12 \int_0^\cdot \Tr_{B(u)} \delta G'(u)\,ds.
    \end{split}
  \end{equation}
  Let $(g_n) \subset C^2(\erre)$ be a sequence of functions such that
  $g_n(0)=g'_n(0)=0$ for all $n \in \enne$, $(g_n'')$ is uniformly
  bounded, and
  \[
    g_n(x) \to \frac12 \abs{x^-}^2, \qquad g_n'(x) \to -x^-, \qquad
    g''_n(x) \to \ind{\{x \leq 0\}}
  \]
  for every $x \in \erre$. Defining the sequence of maps $(G_n)$ as
  \begin{align*}
    G_n \colon H &\longrightarrow \erre\\
    v &\longmapsto \int_X g_n(v(x))\,m(dx),
  \end{align*}
  \eqref{eq:ItoG} holds with $G$ replaced by $G_n$ for each
  $n \in \enne$, taking the limit as $n \to \infty$ yields
  \begin{align*}
    \frac12 \norm{u^-}^2 - \int_0^\cdot \ip{Au}{u^-}\,ds
    &= \frac12 \norm{u_0^-}^2 - \int_0^\cdot \ip{f(u)}{u^-}\,ds
      + \int_0^\cdot u^- B(u)\,dW\\
    &\quad + \frac{1}{2} \int_0^\cdot \Tr_{B(u)} \ind{\{u \leq 0\}}\,ds,
  \end{align*}
  where
  \[
    \Tr_{B(u)} \ind{\{u \leq 0\}} = \norm[\big]{\ind{\{u \leq 0\}}
      B(u)}^2_{\cL^2(U,H)}.
  \]
  Since $\ip{Ah}{h^-} \leq 0$ for every $h \in H$ by assumption, the
  second term on the left-hand side is positive. Moreover, thanks to
  assumption (c), the sum of the second and fourth term on the
  right-hand side can be estimated by
  \[
    C \int_0^\cdot \norm{u^-}^2\,ds,
  \]
  with $C$ a positive constant.  We are thus left with
  \begin{equation}
    \label{eq:toppa}
    \norm{u^-}^2 \leq \norm{u_0^-}^2
    + C \int_0^\cdot \norm{u^-}^2\,ds + \int_0^\cdot u^- B(u) dW.
  \end{equation}
  Let $(T_n)$ be a localizing sequence for the continuous local
  martingale on the right-hand side \eqref{eq:toppa}. Introducing the
  stopped process $u_n := u^{T_n}$, one has
  \begin{align*}
    \norm[\big]{u_n^-}^2
    &\leq \norm[\big]{u_0^-}^2
      + C \int_0^{\cdot \wedge T_n} \norm[\big]{u^-}^2
      + \int_0^{\cdot \wedge T_n} u^- B(u) \,dW\\
    &\leq \norm[\big]{u_0^-}^2
      + C \int_0^\cdot \norm[\big]{u_n^-}^2
      + \int_0^{\cdot \wedge T_n} u^- B(u) \,dW.
  \end{align*}
  Recalling that $\E\norm{u_0}^2<\infty$ by assumption, taking
  expectation on both sides and applying Tonelli's theorem yields
  \[
    \E\norm[\big]{u_n^-}^2 \leq \E\norm[\big]{u_0^-}^2
    + C \int_0^\cdot \E\norm[\big]{u_n^-}^2,
  \]
  hence also, by Gronwall's inequality,
  \[
    \E\norm[\big]{u_n^-(t)}^2 \lesssim_t \E\norm[\big]{u_0^-}^2 \qquad
    \forall t \in \erre_+.
  \]
  Passing to the limit as $n \to \infty$, Fatou's lemma yields
  \[
    \E\norm[\big]{u^-(t)}^2 \lesssim_t \E\norm[\big]{u_0^-}^2 \qquad
    \forall t \in \erre_+.
  \]
  The claim is thus proved under the additional assumptions that $A$
  is bounded, that $\ip{Ah}{h^-}\leq 0$ for every $h \in H$, and
  that $u_0$ has finite second moment.
  We are now going to show that the result continues to hold also when
  these additional assumptions are not satisfied.

  Let us assume that $A$ is unbounded, and introduce its Yosida
  approximation
  \[
    A_\lambda := \frac{1}{\lambda} \bigl( I-(I+\lambda A)^{-1} \bigr),
    \qquad \lambda>0.
  \]
  It is well known that $A_\lambda$ is a bounded linear monotone
  operator. Let us show that the positivity preserving property of the
  semigroup $S$ implies that $\ip{A_\lambda h}{h^-} \leq 0$ for every
  $h \in H$ and $\lambda>0$: one has
  \[
    \lambda \ip[\big]{A_\lambda h}{h^-}
    = \ip[\big]{h}{h^-} - \ip[\big]{(I+\lambda A)^{-1}h}{h^-},
  \]
  where $\ip{h}{h^-} \leq -\norm{h^-}^2$ and, recalling that the
  resolvent of $A$ is a positivity preserving contraction of $H$,
  \begin{align*}
  -\ip[\big]{(I+\lambda A)^{-1}h}{h^-}
  &= - \ip[\big]{(I+\lambda A)^{-1}h^+}{h^-}
  + \ip[\big]{(I+\lambda A)^{-1}h^-}{h^-}\\
  &\leq \norm{h^-}^2,
  \end{align*}
  thus establishing the claim.
  Let us now consider the regularized equation
  \[
    du_\lambda + A_\lambda u_\lambda\,dt = f(u_\lambda)\,dt + B(u_\lambda)\,dW,
    \qquad u_\lambda(0)=u_0,
  \]
  for which the first part of the proof implies that
  $u_\lambda \geq 0$. By virtue of the assumption that $u_0$ has
  finite second moment, one has, for any $T>0$,
  \[
    \lim_{\lambda \to 0} \E\sup_{t \leq T}
    \norm[\big]{u_\lambda(t) - u(t)}^2 = 0,
  \]
  where $u$ is the (unique) mild of \eqref{eq:0} (see, e.g.,
  \cite{cm:JFA13}), from which it follows that the positivity of $u_0$
  implies the positivity of $u(t)$ for all $t \geq 0$.

  Finally, we can remove the assumption that $u_0 \in L^2(\P)$. In
  fact, let $u_{0n}:=\ind{\{\norm{u_0}\leq n\}} u_0 \in L^2(\P)$, and
  denote the unique mild solution to \eqref{eq:0} with initial
  condition $u_{0n}$ by $u_n$. Then $u_n(t) \geq 0$ for all $t \geq 0$
  by the previous reasoning, and theorem~\ref{thm:cozza} implies that
  $u_n \to u$ in $L^0(\Omega \times [0,T];H)$ for every $T>0$. This in
  turn implies that $u(t) \geq 0$ for all $t \geq 0$, and the proof is
  completed.
\end{proof}


\section{Heath-Jarrow-Morton models and Musiela's equation}
\label{sec:HJM}
As an application of the abstract positivity-preserving property of
the previous section, we are going to provide sufficient conditions
for the positivity of forward rates in the Heath-Jarrow-Morton (HJM)
model (see~\cite{HJM}). We first briefly recall its origin and the
reparametrization introduced by Musiela in \cite{musiela}.

Denoting the forward rate at time $t$ for date
$T \geq t$ by $f(t,T)$, (a version of) the HJM model assumes that
\begin{equation}
  \label{eq:f}
  f(t,T) = f(0,T) + \int_0^t \bar{\alpha}(s,T)\,ds
  + \sum_{k=1}^\infty \int_0^t \bar{\sigma}_k(s,T)\,dw^k(s),
\end{equation}
where $(w^k)$ is a sequence of independent standard Wiener processes,
$f(0,T)$ is an $\cF_0$-measurable random variable, and
$\bar{\alpha}(\cdot,T)$, $(\bar{\sigma}_k(\cdot,T))$ are predictable
processes such that
\[
  \int_0^T \abs[\big]{\bar{\alpha}(s,T)}
  + \norm[\big]{\bar{\sigma}(s,T)}^2_{\ell^2}\,ds < \infty
\]
$\P$-almost surely. One of the major results of $\cite{HJM}$ is that
the discounted bond price process $\check{B}(\cdot,T)$ implied by the
forward rates $f(\cdot, T)$, i.e.
\[
  \check{B}(t,T) = \exp\biggl(-\int_0^t f(s,s)\,ds
  - \int_t^T f(t,s)\,ds \biggr),
\]
is a local martingale (with respect to $\P$) if and only if
\begin{equation}
  \label{eq:HJM}
  \bar{\alpha}(t,T) = \sum_{k=1}^\infty \bar{\sigma}_k(t,T)
  \int_t^T \bar{\sigma}_k(t,s)\,ds.
\end{equation}
Musiela observed in \cite{musiela} that a simple change of variable
allows to write \eqref{eq:f}, interpreted as a family of processes
indexed by $T \in \erre_+$, as the mild solution to a first-order
stochastic PDE. In particular, setting $x:=T-t$ (which corresponds to
considering the time \emph{to} maturity rather than the time \emph{of}
maturity), \eqref{eq:f} can be written as
\[
  f(t,t+x) = f(0,t+x) + \int_0^t \bar{\alpha}(s,t+x)\,ds
  + \sum_{k=1}^\infty \int_0^t \bar{\sigma}_k(s,t+x)\,dw^k(s).
\]
Introducing (for now in a purely formal way) the family of shift
operators $(S(t))_{t \in \erre_+}$ as
\begin{align*}
  S(t) \colon \erre^\erre &\longto \erre^\erre\\
  \phi(\cdot) &\longmapsto \phi(\cdot+t),
\end{align*}
and setting, for any function $\phi:\erre^2 \to \erre$,
$S(t)\phi(x,y):=\phi(x,y+t)$, \eqref{eq:f} can also be written as
\[
  f(t,t+x) = S(t)f(0,x) + \int_0^t S(t-s) \bar{\alpha}(s,s+x)\,ds
  + \sum_{k=1}^\infty \int_0^t S(t-s) \bar{\sigma}_k(s,s+x)\,dw^k(s).
\]
If there exists a Hilbert space of functions $H$ such that the family
of shift operators $S(t)_{t \geq 0}$ is a strongly continuous
semigroup and, on an event of probability one,
$x \mapsto f(t,t+x) \in H$,
$x \mapsto \alpha(\cdot,\cdot+x) \in L^1(0,t;H)$, and
$x \mapsto \sigma(\cdot,\cdot+x) \in L^2(0,t;\ell^2(H))$, setting
\[
  u(t) := f(t,t+\cdot), \qquad
  \alpha(t) := \bar{\alpha}(t,t+\cdot), \qquad
  \sigma(t) := \bar{\sigma}(t,t+\cdot),
\]
one has
\begin{equation}
  \label{eq:u}
  u(t) = S(t)u(0) + \int_0^t S(t-s) \alpha(s)\,ds
  + \sum_{k=1}^\infty \int_0^t S(t-s) \sigma_k(s)\,dw^k(s),
\end{equation}
i.e., in differential notation,
\[
du + Au\,dt = \alpha(t)\,dt + \sum_k \sigma_k(t)\,dw^k(t),
\]
where $-A$ is the infinitesimal generator of the translation semigroup
$S$, hence an operator that acts on smooth functions as a first
derivative.

It is by now well known that Hilbert spaces of functions indeed exist
that permit to make the above reasoning rigorous. A particularly
convenient choice of forward curves is the class of Hilbert spaces
$H_\alpha$, $\alpha>0$, defined in \S\ref{sec:prel}. In fact, elements
of $H_\alpha$ are continuous functions, and constant functions belong
to $H_\alpha$. These properties are essential, as empirical
observations suggest that forward curves are smooth (at least
continuous) and tend to flatten out for large times to maturity
without necessarily decaying to zero.
For technical reasons that will become apparent later, we shall also
consider as state space the Hilbert space $L^2_{-\alpha}$ defined in
\S\ref{ssec:sp}.

In order to consider HJM models (in Musiela's parametrization) for
which the diffusion coefficient depends on the forward curve itself,
we are naturally led to consider stochastic evolution equations, on
the Hilbert spaces $H_\alpha$ and $L^2_{-\alpha}$, $\alpha>0$, of the
form
\begin{equation}
  \label{eq:ugo}
  du + Au\,dt = \beta(t,u)\,dt + \sigma(t,u)\,dW,
  \qquad u(0)=u_0,
\end{equation}
to which we shall refer as Musiela's equation. Here $W$ is a
cylindrical Wiener process on a separable Hilbert space $U$, as in
\S\ref{sec:main}, and, writing $H$ for either $H_\alpha$ or
$L^2_{-\alpha}$, 
$\sigma:\Omega \times \erre_+ \times H \to \cL^2(U,H)$
satisfies measurability, Lipschitz-continuity, and linear growth
assumptions completely analogous to those imposed on $B$ in
\S\ref{sec:main}, and
$\beta: \Omega \times \erre_+ \times H \to H$ is such
that, at least formally,
\[
\beta(t,v) = \ip[\big]{\sigma(t,v)}{I\sigma(t,v)}_U,
\]
where, for any separable Hilbert space $K$, $I$ is the operator
defined by
\begin{align*}
  I\colon L^1_{\mathrm{loc}}(\erre_+;K)
  &\longrightarrow L^1_{\mathrm{loc}}(\erre_+;K)\\
  v &\longmapsto \int_0^\cdot v(y)\,dy.
\end{align*}
In other words, denoting a complete orthonormal basis of $U$ by
$(e_k)$ and setting $\sigma_k(\cdot)=\ip{\sigma(\cdot)}{e_k}$, one has
\[
\beta(t,v) = \sum_{k=1}^\infty \sigma_k(t,v) I\sigma_k(t,v) =
\sum_{k=1}^\infty \sigma_k(t,v) \int_0^\cdot
\bigl[\sigma_k(t,v)\bigr](y)\,dy.
\]

\subsection{Well-posedness on $H_\alpha$}
It is not difficult to show (cf.~\cite[Corollary~5.1.2]{filipo}) that
the map $h \mapsto hIh$ is a locally Lipschitz continuous endomorphism
of $H_\alpha$. More precisely, one has
\[
  \norm[\big]{hIh - gIg}_{H_\alpha} \lesssim \bigl(
  \norm{h}_{H_\alpha} + \norm{g}_{H_\alpha} \bigr) \norm{h-g}_{H_\alpha}
  \qquad \forall h,g \in H_\alpha.
\]
The following simple extension will be used several times.
\begin{lemma}
  \label{lm:hagi}
  The map formally defined on $(H_\alpha)^\enne$ as
  \[
  h \longmapsto \ip{h}{Ih} := \sum_{k=1}^\infty h_kIh_k
  \]
  is a locally Lipschitz continuous map from $\ell^2(H_\alpha)$ to
  $H_\alpha$. More precisely,
  \[
  \norm[\big]{\ip{h}{Ih} - \ip{g}{Ig}}_{H_\alpha} \lesssim
  \bigl(\norm{h}_{\ell^2(H_\alpha)} + \norm{g}_{\ell^2(H_\alpha)}
  \bigr) \norm{h-g}_{\ell^2(H_\alpha)}
  \]
  for every $h, g \in \ell^2(H_\alpha)$.
\end{lemma}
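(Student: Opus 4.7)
The plan is to reduce the sequence-valued statement to the scalar local Lipschitz estimate of Filipovi\'c recalled just above, namely
\[
\norm{hIh - gIg}_{H_\alpha} \lesssim \bigl(\norm{h}_{H_\alpha} + \norm{g}_{H_\alpha}\bigr) \norm{h-g}_{H_\alpha},
\]
applied componentwise, and then to sum the resulting termwise bound with Cauchy--Schwarz in $\ell^2$. The key observation is that the quadratic form structure in $h$ pairs perfectly, under Cauchy--Schwarz, with the bilinear structure of $\ell^2(H_\alpha)$, so that the extension is essentially mechanical.

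First I would verify that for any $h \in \ell^2(H_\alpha)$ the series defining $\ip{h}{Ih}$ converges absolutely in $H_\alpha$, so that it indeed lies in $H_\alpha$. Setting $g = 0$ in the scalar estimate gives $\norm{h_k I h_k}_{H_\alpha} \lesssim \norm{h_k}^2_{H_\alpha}$, so
\[
\sum_{k=1}^\infty \norm{h_k I h_k}_{H_\alpha} \lesssim \sum_{k=1}^\infty \norm{h_k}^2_{H_\alpha} = \norm{h}^2_{\ell^2(H_\alpha)} < \infty,
\]
and completeness of $H_\alpha$ yields a bona fide element $\ip{h}{Ih} \in H_\alpha$.

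For the Lipschitz bound I apply the scalar estimate to each index $k$ and sum, using the triangle inequality in $H_\alpha$ and then Cauchy--Schwarz in $\ell^2$:
\[
\norm[\big]{\ip{h}{Ih} - \ip{g}{Ig}}_{H_\alpha}
\leq \sum_k \norm{h_k I h_k - g_k I g_k}_{H_\alpha}
\lesssim \sum_k \bigl(\norm{h_k}_{H_\alpha} + \norm{g_k}_{H_\alpha}\bigr) \norm{h_k - g_k}_{H_\alpha},
\]
and the last sum is bounded by $\bigl(\norm{h}_{\ell^2(H_\alpha)} + \norm{g}_{\ell^2(H_\alpha)}\bigr) \norm{h-g}_{\ell^2(H_\alpha)}$, which is exactly the claim.

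There is no real obstacle: once Filipovi\'c's scalar bound is taken for granted, the passage to $\ell^2(H_\alpha)$ reduces to triangle inequality and Cauchy--Schwarz. It is worth noting that one never needs a separate bilinear bound of the form $\norm{aIb}_{H_\alpha} \lesssim \norm{a}_{H_\alpha} \norm{b}_{H_\alpha}$, which would be delicate to establish directly because $Ib$ can grow linearly at infinity; working with the symmetric difference $h_k I h_k - g_k I g_k$ lets the cited quadratic estimate absorb all the relevant cancellations inside each summand.
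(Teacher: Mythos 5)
Your proposal is correct and follows essentially the same route as the paper: apply the scalar estimate $\norm{h_kIh_k-g_kIg_k}_{H_\alpha}\lesssim(\norm{h_k}_{H_\alpha}+\norm{g_k}_{H_\alpha})\norm{h_k-g_k}_{H_\alpha}$ termwise, sum via the triangle (Minkowski) inequality, and conclude with Cauchy--Schwarz in $\ell^2$. The only addition is your explicit absolute-convergence check, which the paper leaves implicit.
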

\begin{proof}
  Let $h$, $g \in \ell^2(H_\alpha)$. The Minkowski and Cauchy-Schwarz
  inequalities imply
  \begin{align*}
    \norm[\big]{\ip{h}{Ih} - \ip{g}{Ig}}_{H_\alpha}
    &= \norm[\Big]{\sum_{k=1}^\infty \bigl( h_kIh_k - g_kIg_k \bigr)}_{H_\alpha}\\
    &\leq \sum_{k=1}^\infty \norm[\big]{h_kIh_k - g_kIg_k}_{H_\alpha}\\
    &\lesssim \sum_{k=1}^\infty \bigl(\norm{h_k}_{H_\alpha} +
    \norm{g_k}_{H_\alpha}
    \bigr) \norm{h_k-g_k}_{H_\alpha}\\
    &\leq \bigl(\norm{h}_{\ell^2(H_\alpha)} +
    \norm{g}_{\ell^2(H_\alpha)} \bigr) \norm{h-g}_{\ell^2(H_\alpha)}.
  \qedhere
  \end{align*}
\end{proof}

As an immediate corollary it follows that
\[
  \norm[\big]{\beta(v_1) - \beta(v_2)}_{H_\alpha}
  \lesssim \bigl( \norm[\big]{\sigma(v_1)}_{\cL^2(U,H_\alpha)}
  + \norm[\big]{\sigma(v_2)}_{\cL^2(U,H_\alpha)} \bigr)
  \norm[\big]{\sigma(v_1) - \sigma(v_2)}_{\cL^2(U,H_\alpha)},
\]
for any $v_1$, $v_2 \in H_\alpha$ (here and in the following we
suppress the explicit indication of the dependence on $\omega$ and
$t$). In particular, $\beta$ is Lipschitz continuous if $\sigma$ is
Lipschitz continuous and bounded, uniformly over
$\Omega \times \erre_+$. If $\sigma$ is just locally Lipschitz
continuous and locally bounded, uniformly over
$\Omega \times \erre_+$, the same holds for $\beta$.

It turns out, however, that diffusion coefficients $\sigma=(\sigma_k)$
given by superposition operators are \emph{not} Lipschitz continuous
and bounded, even for very regular functions, so that global
well-posedness (in the mild sense) of \eqref{eq:ugo} is not
guaranteed. However, we are going to show that, under suitable
conditions, they are locally Lipschitz continuous and bounded, so that
\eqref{eq:ugo} is locally well posed. Analogous results are proved in
\cite[{\S}5.4]{filipo}, but we provide nonetheless a proof for several
reasons: we use a different norm on $H_\alpha$, our assumptions are
slightly different, and we shall extensively employ these estimates
later.

In the following, for any function
$\phi: \Omega \times \erre_+ \times \erre_+ \times \erre \to \erre$,
we shall denote by $\partial_1\phi$ and $\partial_2\phi$ the partial
derivatives of $\phi$ with respect to its third and fourth argument,
respectively.

\begin{hyp}
  Let $\psi = (\psi_k) \in \ell^2(L^2_{\alpha})$, with $\psi_k \geq 0$
  for all $k \in \enne$, and $\eta = (\eta_k) \subset \erre^\erre$ be
  a sequence of positive increasing even functions such that
  $\widetilde{\eta} := \norm{\eta(\cdot)}_{\ell^2}: \erre \to \erre$ is
  bounded on bounded sets. The functions
  \[
  \sigma_k: \Omega \times \erre_+^2 \times \erre \to \erre, \qquad k
  \in \enne,
  \]
  are measurable with respect to the $\sigma$-algebra
  $\mathscr{P} \otimes \mathscr{B}(\erre_+) \otimes
  \mathscr{B}(\erre)$, and
  $\sigma_k(\omega,t,\cdot,\cdot) \in C^1(\erre \times \erre_+)$ for
  all $(\omega,t) \in \Omega \times \erre_+$. Moreover, they
  satisfy the following conditions:
  \begin{itemize}
  \item[(a)] $\lim_{x\to\infty} \sigma_k(\omega,t,x,r)=0$ for all $r \in \erre$;
  \item[(b)] $\abs[\big]{\partial_1 \sigma_k(\omega,t,x,r)} \leq
    \psi_k(x) \eta_k(r)$ for all $(\omega,t,x,r) \in \Omega \times
    \erre_+^2 \times \erre$;
  \item[(c)] $\abs[\big]{\partial_2 \sigma_k(\omega,t,x,r)} \leq
    \eta_k(r)$ for all $(\omega,t,x,r) \in \Omega \times \erre_+^2
    \times \erre$;
  \item[(d)] $\abs[\big]{\partial_1 \sigma_k(\omega,t,x,r_1)
      - \partial_1 \sigma_k(\omega,t,x,r_2)} \leq \psi_k(x)
    \bigl(\eta_k(r_1) + \eta_k(r_2)\bigr) \abs{r_1-r_2}$ for all
    $(\omega,t,x) \in \Omega \times \erre_+^2$ and $r_1, r_2 \in
    \erre$;
  \item[(e)] $\abs[\big]{\partial_2 \sigma_k(\omega,t,x,r_1)
      - \partial_2 \sigma_k(\omega,t,x,r_2)} \leq \bigl(\eta_k(r_1) +
    \eta_k(r_2)\bigr) \abs{r_1-r_2}$ for all $(\omega,t,x) \in \Omega
    \times \erre_+^2$ and $r_1, r_2 \in \erre$.
  \end{itemize}
\end{hyp}

\begin{prop}
  \label{prop:pippo}
  Assume that Hypothesis~1 is satisfied.  Then $\sigma$ is a
  well-defined map from $\Omega \times \erre_+ \times H_\alpha$ to
  $\cL^2(U,H_\alpha)$, measurable with respect to the $\sigma$-algebra
  $\mathscr{P} \otimes \mathscr{B}(H_\alpha)$, and it satisfies the
  estimates
  \begin{align*}
    \norm[\big]{\sigma(\omega,t,v)}_{\cL^2(U,H_\alpha)}
    &\leq \widetilde{\eta}\bigl( \delta \norm{v}_{H_\alpha} \bigr)
      \bigl( \norm{v}_{H_\alpha} + \norm{\psi}_{\ell^2(L^2_\alpha)} \bigr),\\
    \norm[\big]{\sigma(\omega,t,v_1)-\sigma(\omega,t,v_2)}_{\cL^2(U,H_\alpha)}
    &\lesssim_\alpha
    \Bigl( \widetilde{\eta}\bigl(%
    \delta \norm{v_1}_{H_\alpha} \bigr) + \widetilde{\eta}\bigl(%
      \delta \norm{v_2}_{H_\alpha} \bigr) \Bigr) \, \cdot\\
    &\qquad \cdot \, \bigl(%
    \norm{v_2}_{H_\alpha} + \norm{\psi}_{\ell^2(L^2_\alpha)} \bigr)
    \norm[\big]{v_1-v_2}_{H_\alpha}
  \end{align*}
  for all $(\omega,t) \in \Omega \times \erre_+$ and
  $v, v_1, v_2 \in H_\alpha$. In particular, $\sigma$ is locally
  bounded and locally Lipschitz continuous in its third argument,
  uniformly over $\Omega \times \erre_+$.
\end{prop}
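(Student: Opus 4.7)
The plan is to unfold the definition of the $H_\alpha$ norm and reduce every bound to the pointwise estimates in Hypothesis~1, using the embedding $H_\alpha \embed C_b(\erre_+)$ to convert $L^\infty$-in-$x$ bounds of the composition $v(x)$ into multiples of $\norm{v}_{H_\alpha}$. Write $[\sigma_k(v)](x) = \sigma_k(\omega,t,x,v(x))$. Because $v \in H_\alpha$ admits the finite limit $v(\infty)$ and $\sigma_k(\omega,t,\cdot,\cdot)$ is $C^1$, the distributional derivative of $\sigma_k(v)$ is the classical one given by the chain rule
\[
[\sigma_k(v)]'(x) = \partial_1\sigma_k(\omega,t,x,v(x)) + \partial_2\sigma_k(\omega,t,x,v(x))\,v'(x).
\]
First I would check that $\sigma_k(v)(\infty)=0$: combining (a) with the Lipschitz estimate in $r$ implied by (c), and using $v(x)\to v(\infty)$, gives $\sigma_k(\omega,t,x,v(x))\to 0$ as $x\to\infty$.

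Next I would bound the boundary term and the $L^2_\alpha$-norm of $[\sigma_k(v)]'$. The boundary term vanishes. For the derivative, I would split using the triangle inequality, then use (b) together with the monotonicity of $\eta_k$ (even and increasing, so $\eta_k(v(x))\leq\eta_k(\delta\norm{v}_{H_\alpha})$ where $\delta$ is the embedding constant from $H_\alpha$ into $C_b$) to get
\[
\int_0^\infty \abs{\partial_1\sigma_k(\cdot,v(\cdot))}^2 e^{\alpha x}\,dx
\leq \eta_k(\delta\norm{v}_{H_\alpha})^2 \norm{\psi_k}_{L^2_\alpha}^2,
\]
and (c) to treat the $\partial_2$ piece by $\eta_k(\delta\norm{v}_{H_\alpha})^2\norm{v}_{H_\alpha}^2$. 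Summing over $k$ and using the trivial bound $\eta_k(r)^2 \leq \widetilde{\eta}(r)^2$ together with $\sum_k\norm{\psi_k}_{L^2_\alpha}^2 = \norm{\psi}_{\ell^2(L^2_\alpha)}^2$ yields the first claimed estimate; in particular $\sigma(\omega,t,v) \in \cL^2(U,H_\alpha)$.

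For the Lipschitz bound I would expand the derivative of $\sigma_k(v_1)-\sigma_k(v_2)$ into three pieces,
\[
\partial_1\sigma_k(\cdot,v_1) - \partial_1\sigma_k(\cdot,v_2),
\quad \partial_2\sigma_k(\cdot,v_1)(v_1'-v_2'),
\quad (\partial_2\sigma_k(\cdot,v_1)-\partial_2\sigma_k(\cdot,v_2))v_2',
\]
estimate the first and third by (d) and (e) respectively, using $\abs{v_1(x)-v_2(x)}\leq\delta\norm{v_1-v_2}_{H_\alpha}$ to pull out the $C_b$-norm, and the middle one directly by (c). Integrating against $e^{\alpha x}\,dx$, the first piece contributes $\psi_k(x)^2(\eta_k(\delta\norm{v_1}_{H_\alpha})+\eta_k(\delta\norm{v_2}_{H_\alpha}))^2\norm{v_1-v_2}_{H_\alpha}^2$, which is controlled via $\norm{\psi_k}_{L^2_\alpha}^2$; the other two contribute terms controlled by $\norm{v_1-v_2}_{H_\alpha}^2$ and $\norm{v_2}_{H_\alpha}^2\norm{v_1-v_2}_{H_\alpha}^2$. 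Summing in $k$ exactly as before, with the same $\eta_k\leq\widetilde{\eta}$ trick, produces the claimed Lipschitz inequality.

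The measurability claim is standard: each $(\omega,t,v)\mapsto\sigma_k(\omega,t,\cdot,v(\cdot))$ is $\mathscr{P}\otimes\mathscr{B}(H_\alpha)$-measurable by the joint measurability of $\sigma_k$ and continuity of the superposition operator just proved, and countable-coordinate measurability passes to the $\cL^2(U,H_\alpha)$-valued map. The only subtle point I anticipate is bookkeeping with the two distinct indices of summation (over $k$ and over the decomposition above) together with the $\eta_k\le\widetilde\eta$ replacement: one must resist applying Cauchy--Schwarz between $(\eta_k)$ and $(\norm{\psi_k}_{L^2_\alpha})$, and instead use the crude pointwise bound $\eta_k(r)\leq\widetilde{\eta}(r)$ to factor $\widetilde{\eta}(\delta\norm{v}_{H_\alpha})^2$ out of the $k$-sum, leaving $\norm{\psi}_{\ell^2(L^2_\alpha)}^2$ behind; this is what produces the exact form of the stated estimates.
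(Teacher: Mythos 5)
Your proof is correct and follows essentially the same route as the paper: reduce the $H_\alpha$-norm of $\sigma_k(v)$ to the $L^2_\alpha$-norm of its chain-rule derivative (the boundary term at infinity vanishing, which you justify more explicitly than the paper does), bound the $\partial_1$ and $\partial_2$ contributions via (b)--(c), and split the Lipschitz difference into exactly the three pieces matched to (d), (c), and (e). The only cosmetic divergence is in the summation over $k$: where you insist on the pointwise bound $\eta_k \le \widetilde{\eta}$, the paper instead applies Cauchy--Schwarz followed by $\norm{\cdot}_{\ell^4}\le\norm{\cdot}_{\ell^2}$ and, contrary to your closing warning, arrives at the very same bound $\norm{\psi}_{\ell^2(L^2_\alpha)}\,\widetilde{\eta}\bigl(\delta\norm{v}_{H_\alpha}\bigr)$.
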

\begin{proof}
  Throughout the proof we shall omit the explicit indication of the
  first two arguments of $\sigma$ as well as of $\sigma_k$. Since
  $\sigma_k(\cdot,r)$ is zero at infinity for all $r \in \erre$, one
  has, by the triangle inequality in $\ell^2(L^2_{\alpha})$,
  \begin{align*}
  \norm[\big]{\sigma(v)}_{\cL^2(U,H_\alpha)}
  &= \biggl( \sum_{k=1}^\infty
  \norm[\big]{\partial_1 \sigma_k(\cdot,v)
  + \partial_2 \sigma_k(\cdot,v)v'}^2_{L^2_{\alpha}} \biggr)^{1/2}\\
  &\leq \biggl( \sum_{k=1}^\infty \norm[\big]{\partial_1%
  \sigma_k(\cdot,v)}^2_{L^2_{\alpha}} \biggr)^{1/2}
  + \biggl( \sum_{k=1}^\infty \norm[\big]{\partial_2%
  \sigma_k(\cdot,v)v'}^2_{L^2_{\alpha}} \biggr)^{1/2}
  \end{align*}
  for every $v \in H_\alpha$. It follows by (b) that
  \[
  \norm[\big]{\partial_1 \sigma_k(\cdot,v)}_{L^2_{\alpha}} \leq
  \norm[\big]{\psi_k \eta_k(v)}_{L^2_{\alpha}} \leq
  \norm[\big]{\psi_k}_{L^2_{\alpha}} \,
  \norm[\big]{\eta_k(v)}_{L^\infty},
  \]
  where, denoting by $\delta=\delta(\alpha)$ the (operator) norm of
  the embedding $H_\alpha \embed L^\infty$, the assumptions on
  $\eta_k$ imply
  \[
  \norm[\big]{\eta_k(v)}_{L^\infty} \leq \eta_k\bigl( \norm{v}_{L^\infty} \bigr)
  \leq \eta_k\bigl( \delta \norm{v}_{H_\alpha} \bigr),
  \]
  hence also
  \[
  \norm[\big]{\partial_1 \sigma_k(\cdot,v)}_{L^2_{\alpha}}
  \leq \norm[\big]{\psi_k}_{L^2_{\alpha}} \,
  \eta_k\bigl( \delta \norm{v}_{H_\alpha} \bigr).
  \]
  Therefore, applying the Cauchy-Schwarz inequality and
  $\norm{\cdot}_{\ell^4} \leq \norm{\cdot}_{\ell^2}$, one has
  \begin{align*}
    \biggl( \sum_{k=1}^\infty \norm[\big]{\partial_1%
    \sigma_k(\cdot,v)}^2_{L^2_{\alpha}} \biggr)^{1/2}
    &\leq \biggl( \sum_{k=1}^\infty \norm[\big]{\psi_k}^2_{L^2_{\alpha}}
      \eta^2_k\bigl( \delta \norm{v}_{H_\alpha} \bigr) \biggr)^{1/2}\\
    &\leq \biggl( \sum_{k=1}^\infty \norm[\big]{\psi_k}^4_{L^2_{\alpha}}
      \biggr)^{1/4}%
      \biggl( \sum_{k=1}^\infty \eta^4_k\bigl( \delta \norm{v}_{H_\alpha} \bigr)
      \biggr)^{1/4}\\
    &\leq \norm[\big]{\psi}_{\ell^2(L^2_\alpha)} \, \widetilde{\eta}\bigl(
      \delta \norm{v}_{H_\alpha} \bigr).
  \end{align*}
  Similarly, (c) yields
  \begin{equation*}
    \norm[\big]{\partial_2 \sigma_k(\cdot,v)v'}_{L^2_{\alpha}}
    \leq \norm[\big]{\eta_k(v) v'}_{L^2_{\alpha}}
    \leq \norm[\big]{\eta_k(v)}_{L^\infty} \norm[\big]{v}_{H_\alpha}
    \leq \eta_k\bigl( \delta \norm{v}_{H_\alpha} \bigr)
    \norm[\big]{v}_{H_\alpha},
  \end{equation*}
  hence
  \begin{align*}
    \biggl( \sum_{k=1}^\infty \norm[\big]{\partial_2%
    \sigma_k(\cdot,v)v'}^2_{L^2_{\alpha}} \biggr)^{1/2}
    &\leq \widetilde{\eta}\bigl( \delta \norm{v}_{H_\alpha} \bigr)
    \norm[\big]{v}_{H_\alpha}.
  \end{align*}
  We have thus shown that
  \[
  \norm[\big]{\sigma(v)}_{\cL^2(U,H_\alpha)} \leq
  \widetilde{\eta}\bigl( \delta \norm{v}_{H_\alpha} \bigr)
  \bigl( \norm{v}_{H_\alpha} + \norm{\psi}_{\ell^2(L^2_\alpha)} \bigr),
  \]
  from which it follows immediately that $\sigma$ is well defined and
  locally bounded, recalling that $\widetilde{\eta}$ is bounded on
  bounded sets.
  Let us now establish the local Lipschitz continuity of $\sigma$. In
  analogy to a previous computation, one has, for any $v_1, v_2 \in
  H_\alpha$,
  \begin{align*}
    \norm[\big]{\sigma(v_1)-\sigma(v_2)}_{\cL^2(U,H_\alpha)} &\leq
    \biggl( \sum_{k=1}^\infty \norm[\big]{\partial_1 \sigma_k(\cdot,v_1)
    - \partial_1\sigma_k(\cdot,v_2)}^2_{L^2_{\alpha}} \biggr)^{1/2}\\
    &\quad + \biggl( \sum_{k=1}^\infty \norm[\big]{\partial_2%
    \sigma_k(\cdot,v_1)v_1'
    - \partial_2\sigma_k(\cdot,v_2)v_2'}^2_{L^2_{\alpha}}
    \biggr)^{1/2},
  \end{align*}
  where, thanks to (d),
  \begin{align*}
  \norm[\big]{\partial_1 \sigma_k(\cdot,v_1)
    - \partial_1\sigma_k(\cdot,v_2)}^2_{L^2_{\alpha}}
  &\leq \norm[\big]{\psi_k (\eta_k(v_1) + \eta_k(v_2))
    \abs{v_1-v_2}}_{L^2_{\alpha}}\\
  &\leq \norm[\big]{\psi_k}_{L^2_{\alpha}}
        \norm[\big]{(\eta_k(v_1) + \eta_k(v_2))}_{L^\infty}
        \norm[\big]{v_1 - v_2}_{L^\infty}\\
  &\leq \delta \norm[\big]{\psi_k}_{L^2_{\alpha}} \,
  \bigl( \eta_k(\delta \norm{v_1}_{H_\alpha})
  + \eta_k(\delta \norm{v_2}_{H_\alpha}) \bigr)
  \norm[\big]{v_1-v_2}_{H_\alpha},
  \end{align*}
  which implies, similarly to a previous computation,
  \begin{align*}
    &\biggl( \sum_{k=1}^\infty \norm[\big]{\partial_1 \sigma_k(\cdot,v_1)
    - \partial_1\sigma_k(\cdot,v_2)}^2_{L^2_{\alpha}} \biggr)^{1/2}\\
    &\hspace{3em} \leq \delta \norm[\big]{\psi}_{\ell^2(L^2_{\alpha})} \,
  \Bigl( \widetilde{\eta}\bigl(\delta \norm{v_1}_{H_\alpha}\bigr)
  + \widetilde{\eta}\bigl(\delta \norm{v_2}_{H_\alpha}\bigr) \Bigr)
  \norm[\big]{v_1-v_2}_{H_\alpha}.
  \end{align*}
  Moreover, one has
  \begin{align*}
  &\norm[\big]{\partial_2 \sigma_k(\cdot,v_1)v_1'
    - \partial_2\sigma_k(\cdot,v_2)v_2'}_{L^2_{\alpha}}\\
  &\hspace{3em} \leq \norm[\big]{\partial_2 \sigma_k(\cdot,v_1)%
    (v_1' - v_2')}_{L^2_{\alpha}}
  + \norm[\big]{\bigl(\partial_2 \sigma_k(\cdot,v_1) 
    - \partial_2\sigma_k(\cdot,v_2)\bigr)v_2'}_{L^2_{\alpha}},
  \end{align*}
  where, by (c),
  \begin{align*}
  \norm[\big]{\partial_2 \sigma_k(\cdot,v_1) (v_1' - v_2')}^2_{L^2_{\alpha}}
  &\leq \norm[\big]{\eta_k(v_1)}_{L^\infty} \norm[\big]{v_1-v_2}_{H_\alpha}\\
  &\leq \eta_k\bigl( \delta \norm{v_1}_{H_\alpha} \bigr)
  \norm[\big]{v_1-v_2}_{H_\alpha},
  \end{align*}
  and, by (e),
  \begin{align*}
  &\norm[\big]{\bigl(\partial_2 \sigma_k(\cdot,v_1)
    - \partial_2\sigma_k(\cdot,v_2)\bigr)v_2'}_{L^2_{\alpha}}\\
  &\hspace{3em} \leq \norm[\big]{\bigl(\eta_k(v_1) +
    \eta_k(v_2)\bigr) (v_1-v_2) v_2'}_{L^2_{\alpha}}\\
  &\hspace{3em} \leq \norm[\big]{\eta_k(v_1) +
    \eta_k(v_2)}_{L^\infty} \, \norm[\big]{v_1-v_2}_{L^\infty}
  \, \norm[\big]{v_2}_{H_\alpha}\\
  &\hspace{3em} \leq \delta \Bigl( \eta_k\bigl(%
   \delta \norm{v_1}_{H_\alpha} \bigr) + \eta_k\bigl(%
   \delta \norm{v_2}_{H_\alpha} \bigr) \Bigr)
   \norm[\big]{v_2}_{H_\alpha} \, \norm[\big]{v_1-v_2}_{H_\alpha},
  \end{align*}
  so that
  \begin{align*}
  &\norm[\big]{\partial_2 \sigma_k(\cdot,v_1)v_1'
    - \partial_2\sigma_k(\cdot,v_2)v_2'}_{L^2_{\alpha}}\\
  &\hspace{3em} \lesssim_\alpha \Bigl( \eta_k\bigl(%
   \delta \norm{v_1}_{H_\alpha} \bigr) + \eta_k\bigl(%
   \delta \norm{v_2}_{H_\alpha} \bigr) \Bigr)
   \norm[\big]{v_2}_{H_\alpha} \, \norm[\big]{v_1-v_2}_{H_\alpha}
  \end{align*}
  and
  \begin{align*}
    &\biggl( \sum_{k=1}^\infty \norm[\big]{\partial_2 \sigma_k(\cdot,v_1)v_1'
    - \partial_2\sigma_k(\cdot,v_2)v_2'}_{L^2_{\alpha}} \biggr)^{1/2}\\
    &\hspace{3em} \lesssim_\alpha \Bigl( \widetilde{\eta}\bigl(%
   \delta \norm{v_1}_{H_\alpha} \bigr) + \widetilde{\eta}\bigl(%
   \delta \norm{v_2}_{H_\alpha} \bigr) \Bigr)
   \norm[\big]{v_2}_{H_\alpha} \, \norm[\big]{v_1-v_2}_{H_\alpha}.
  \end{align*}
  We have thus proved that
  \begin{align*}
    &\norm[\big]{\sigma(v_1)-\sigma(v_2)}_{\cL^2(U,H_\alpha)}\\
    &\hspace{3em} \lesssim_\alpha
    \Bigl( \widetilde{\eta}\bigl(%
    \delta \norm{v_1}_{H_\alpha} \bigr) + \widetilde{\eta}\bigl(%
    \delta \norm{v_2}_{H_\alpha} \bigr) \Bigr) \Bigl(%
    \norm[\big]{\psi}_{\ell^2(L^2_\alpha)} + \norm[\big]{v_2}_{H_\alpha} \Bigr)
    \norm[\big]{v_1-v_2}_{H_\alpha},
  \end{align*}
  which implies, thanks to the assumptions on $\eta$, the asserted
  local Lipschitz continuity of $\sigma$.
\end{proof}

\subsection{Well-posedness on $L^2_{-\alpha}$}
Let us begin with the following estimate.
\begin{lemma}
  Let $\alpha>0$, and $f=(f_k)$, $g=(g_k)$ be sequences of real-valued
  functions on $\erre_+$. Then
  \[
    \norm[\big]{\ip{f}{Ig}_{\ell^2}}_{L^2_{-\alpha}}
    \lesssim_\alpha \norm[\big]{f}_{\ell^2(L^2_\alpha)}^2 \;
      \norm[\big]{g}^2_{\ell^2(L^2_{-\alpha})}.
  \]
\end{lemma}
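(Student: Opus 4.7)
The plan is a direct two-step Cauchy--Schwarz argument. First I would bound the integrand pointwise by applying Cauchy--Schwarz in $\ell^{2}$ to the inner product: for almost every $x \in \erre_+$,
\[
  \abs[\big]{\ip{f(x)}{Ig(x)}_{\ell^2}}^{2} \leq \Bigl(\sum_{k} f_k(x)^{2}\Bigr)\Bigl(\sum_{k} \abs{Ig_k(x)}^{2}\Bigr).
\]
Second, each $\abs{Ig_k(x)}$ is controlled by splitting the integrand as $g_k(y) = g_k(y) e^{-\alpha y/2} \cdot e^{\alpha y/2}$ on $[0,x]$ and applying Cauchy--Schwarz in $L^{2}(0,x)$:
\[
  \abs{Ig_k(x)}^{2} \leq \Bigl(\int_{0}^{x} e^{\alpha y}\,dy\Bigr)\Bigl(\int_{0}^{x} \abs{g_k(y)}^{2} e^{-\alpha y}\,dy\Bigr) \leq \frac{e^{\alpha x}}{\alpha}\,\norm{g_k}_{L^{2}_{-\alpha}}^{2}.
\]
Summing over $k$ gives $\sum_{k}\abs{Ig_k(x)}^{2} \leq \alpha^{-1} e^{\alpha x}\,\norm{g}_{\ell^{2}(L^{2}_{-\alpha})}^{2}$.

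Next I would plug these two estimates together, multiply by the weight $e^{-\alpha x}$, and integrate over $\erre_+$. The exponentials cancel exactly, leaving
\[
  \norm[\big]{\ip{f}{Ig}_{\ell^{2}}}_{L^{2}_{-\alpha}}^{2} \leq \frac{1}{\alpha}\,\norm{g}_{\ell^{2}(L^{2}_{-\alpha})}^{2}\int_{0}^{\infty} \sum_{k} f_k(x)^{2}\,dx,
\]
where the swap of the integral and the sum is legitimate by Tonelli's theorem applied to the nonnegative series. Since $1 \leq e^{\alpha x}$ on $\erre_+$, the remaining unweighted integral is bounded by $\norm{f}_{\ell^{2}(L^{2}_{\alpha})}^{2}$, which corresponds to the trivial embedding $L^{2}_{\alpha} \embed L^{2}(\erre_+)$ extended coordinatewise to $\ell^{2}$-valued functions.

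There is no real obstacle here: the whole argument consists of two applications of Cauchy--Schwarz and one use of $e^{\alpha x} \geq 1$. A scaling remark is however in order. The left-hand side of the stated inequality is bilinear in $(f,g)$, whereas the right-hand side carries squares on both norms; the computation above in fact produces the correctly-scaling estimate
\[
  \norm[\big]{\ip{f}{Ig}_{\ell^{2}}}_{L^{2}_{-\alpha}} \leq \alpha^{-1/2}\,\norm{f}_{\ell^{2}(L^{2}_{\alpha})}\,\norm{g}_{\ell^{2}(L^{2}_{-\alpha})},
\]
so the squares in the printed statement appear to be a typographical artifact. Either form follows from the same two-line calculation.
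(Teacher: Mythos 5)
Your proof is correct and follows essentially the same route as the paper's: two applications of the Cauchy--Schwarz inequality (one in $\ell^2$, one against the weight $e^{\pm\alpha y/2}$), with the exponentials cancelling; the paper merely performs the steps in the opposite order, first bounding $\int_0^x g_k(y)\,dy \leq e^{\alpha x}\norm{g_k}_{L^1_{-\alpha}} \lesssim_\alpha e^{\alpha x}\norm{g_k}_{L^2_{-\alpha}}$ and then applying Cauchy--Schwarz in $\ell^2$ to pair $f_k(x)e^{\alpha x/2}$ with $\norm{g_k}_{L^2_{-\alpha}}$. Your remark about the scaling is also right: the paper's own argument concludes with $\norm{\ip{f}{Ig}_{\ell^2}}_{L^2_{-\alpha}}^2 \lesssim_\alpha \norm{f}_{\ell^2(L^2_\alpha)}^2\,\norm{g}_{\ell^2(L^2_{-\alpha})}^2$, i.e.\ the bilinear estimate, which is also the form in which the lemma is invoked later in the proof of Lemma~\ref{lm:lipL}.
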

\begin{proof}
  One has
  \begin{align*}
    \norm[\big]{\ip{f}{Ig}_{\ell^2}}^2_{L^2_{-\alpha}}
    &= \int_0^\infty \abs[\bigg]{\sum_k f_k(x) \int_0^x g_k(y)\,dy}^2
      e^{-\alpha x}\,dx\\
    &\leq \int_0^\infty \abs[\bigg]{\sum_k f_k(x) e^{-\frac{\alpha}{2}x}
      \int_0^x g_k(y)\,dy}^2 \,dx\\
    &\leq \int_0^\infty \abs[\bigg]{\sum_k f_k(x) e^{\frac{\alpha}{2}x}
      \int_0^\infty \abs{g_k(y)} e^{-\alpha y}\,dy}^2 \,dx,
  \end{align*}
  where
  \[
    \int_0^\infty \abs{g_k(y)} e^{-\alpha y}\,dy
    = \norm[\big]{g_k}_{L^1_{-\alpha}}
    \lesssim_\alpha \norm[\big]{g_k}_{L^2_{-\alpha}},
  \]
  thus also 
  \begin{align*}
    \sum_k f_k(x) e^{\frac{\alpha}{2}x} \int_0^\infty \abs{g_k(y)}
    e^{-\alpha y}\,dy
    &= \ip[\big]{(f_k e^{\frac{\alpha}{2}\cdot})}%
      {\norm{g_k}_{L^2_{-\alpha}}}_{\ell^2}\\
    &\leq \norm[\big]{(f_k e^{\frac{\alpha}{2}\cdot})}_{\ell^2}
      \norm[\big]{g}_{\ell^2(L^2_{-\alpha})}.
  \end{align*}
  Therefore
  \begin{align*}
    \norm[\big]{\ip{f}{Ig}_{\ell^2}}^2_{L^2_{-\alpha}}
    &\lesssim_\alpha \norm[\big]{\norm{(f_k)}_{\ell^2}%
      e^{\frac{\alpha}{2}\cdot}}^2_{L^2} \;
      \norm[\big]{g}^2_{\ell^2(L^2_{-\alpha})}\\
    &= \norm[\big]{f}_{L^2_\alpha(\ell^2)}^2 \;
      \norm[\big]{g}^2_{\ell^2(L^2_{-\alpha})}.
      \qedhere
  \end{align*}
\end{proof}

We can now provide conditions on $\sigma$ implying that $\beta$
satisfies a suitable form of Lipschitz continuity.
\begin{lemma}
  \label{lm:lipL}
  Assume that $\sigma(\omega,t,\cdot)$ is Lipschitz continuous and
  bounded from $L^2_{-\alpha}$ to $\cL^2(U,L^2_\alpha)$ uniformly
  with respect to $(\omega,t) \in \Omega \times \erre_+$, i.e. that
  there exists a constant $N$ such that
  \[
    \sum_{k=1}^\infty
    \norm[\big]{\sigma_k(v_1) - \sigma_k(v_2)}^2_{L^2_{-\alpha}}
    \leq N \norm[\big]{v_1-v_2}^2_{L^2_{-\alpha}}, \qquad
    \sum_{k=1}^\infty \norm[\big]{\sigma_k(v)}^2_{L^2_\alpha}
    < N
  \]
  for all $v$, $v_1$, $v_2 \in L^2_{-\alpha}$, uniformly over
  $\Omega \times \erre_+$. Then $\beta(\omega,t,\cdot)$ is a Lipschitz
  continuous endomorphism of $L^2_{-\alpha}$, uniformly with respect
  to $(\omega,t) \in \Omega \times \erre_+$.
\end{lemma}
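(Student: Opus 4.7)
The plan is to linearise $\beta$ via the telescoping identity
\[
\sigma_k(v_1)I\sigma_k(v_1) - \sigma_k(v_2)I\sigma_k(v_2)
= \sigma_k(v_1)\,I\bigl(\sigma_k(v_1)-\sigma_k(v_2)\bigr)
+ \bigl(\sigma_k(v_1)-\sigma_k(v_2)\bigr)\, I\sigma_k(v_2),
\]
sum in $k$, and bound the $L^2_{-\alpha}$ norm of the two resulting series separately. The first piece, $\sum_k \sigma_k(v_1)\, I(\sigma_k(v_1)-\sigma_k(v_2))$, has its outer factor in $\ell^2(L^2_\alpha)$ by the boundedness hypothesis and its inner factor in $\ell^2(L^2_{-\alpha})$ by the Lipschitz hypothesis, so the preceding lemma applies verbatim and yields a bound of order $\sqrt{N}\cdot\sqrt{N}\,\norm{v_1-v_2}_{L^2_{-\alpha}}$.

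The second piece, $\sum_k (\sigma_k(v_1)-\sigma_k(v_2))\, I\sigma_k(v_2)$, does not fit the hypothesis of the preceding lemma: the outer factor is now only in $\ell^2(L^2_{-\alpha})$, not in $\ell^2(L^2_\alpha)$. I plan to handle it by exploiting the smoothing effect of $I$ directly. The continuous embedding $L^2_\alpha \embed L^1(\erre_+)$ recalled in \S\ref{ssec:sp} gives $\abs{I\sigma_k(v_2)(x)} \leq \alpha^{-1/2}\norm{\sigma_k(v_2)}_{L^2_\alpha}$, whence, by the uniform $\ell^2(L^2_\alpha)$ bound on $\sigma(v_2)$,
\[
\sum_k \abs{I\sigma_k(v_2)(x)}^2 \leq \frac{N}{\alpha} \qquad \forall x \in \erre_+.
\]
Applying Cauchy--Schwarz in $\ell^2$ pointwise in $x$ and then integrating the square against $e^{-\alpha x}\,dx$ produces a bound of the form $\sqrt{N/\alpha}\cdot\sqrt{N}\,\norm{v_1-v_2}_{L^2_{-\alpha}}$, by the Lipschitz assumption on $\sigma$. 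Adding this to the estimate of the first piece yields the claimed Lipschitz property.

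It remains to check that $\beta$ is actually $L^2_{-\alpha}$-valued: this follows by applying the preceding lemma with $f=g=\sigma(\omega,t,0)$ and using the elementary embedding $L^2_\alpha \embed L^2_{-\alpha}$ (which holds because $e^{-\alpha x}\leq e^{\alpha x}$ on $\erre_+$), so that the $\ell^2(L^2_\alpha)$ boundedness of $\sigma(\cdot,\cdot,0)$ controls both factors on the right-hand side. Combined with the Lipschitz estimate, this gives $\beta(\omega,t,v) \in L^2_{-\alpha}$ for every $v$. The main obstacle is precisely the asymmetry of the estimate in the preceding lemma: one factor demands the strong weight $e^{\alpha x}$ while the other tolerates only $e^{-\alpha x}$, and the telescoping decomposition puts the $L^2_{-\alpha}$ factor in the ``wrong'' position in the second piece. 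The workaround relies on the fact that $I$ maps $L^2_\alpha$ continuously into $L^\infty$, which lets one pull the inner factor out in sup norm and restore the balance without invoking the preceding lemma in its swapped form.
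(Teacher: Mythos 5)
Your argument is correct and follows essentially the same route as the paper: the paper uses the telescoping decomposition $\beta(v_1)-\beta(v_2)=\ip{\sigma(v_1)-\sigma(v_2)}{I\sigma(v_1)}_{\ell^2}+\ip{\sigma(v_2)}{I\sigma(v_1)-I\sigma(v_2)}_{\ell^2}$ (yours splits at the other intermediate term, which merely swaps the roles of $v_1$ and $v_2$), treats one piece with the preceding lemma and the other with exactly the pointwise $\ell^2$ bound $\sum_k\abs{I\sigma_k(v_j)(x)}^2\lesssim_\alpha\norm{\sigma(v_j)}^2_{\ell^2(L^2_\alpha)}$ coming from $L^2_\alpha\embed L^1$. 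Your extra check that $\beta$ actually maps into $L^2_{-\alpha}$ is a welcome addition that the paper leaves implicit.
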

\begin{proof}
  For any $v_1$, $v_2 \in L^2_{-\alpha}$ one has
  \begin{align*}
    \norm[\big]{\beta(v_1) - \beta(v_2)}_{L^2_{-\alpha}}
    &= \norm[\big]{\ip[\big]{\sigma(v_1)}{I\sigma(v_1)}_{\ell^2}
      - \ip[\big]{\sigma(v_2)}{I\sigma(v_2)}_{\ell^2}}_{L^2_{-\alpha}}\\
    &= \norm[\big]{\ip[\big]{\sigma(v_1)-\sigma(v_2)}%
      {I\sigma(v_1)}_{\ell^2} - \ip[\big]{\sigma(v_2)}%
      {I\sigma(v_1) - I\sigma(v_2)}_{\ell^2}}_{L^2_{-\alpha}}\\
    &= \norm[\big]{\ip[\big]{\sigma(v_1)%
      -\sigma(v_2)}{I\sigma(v_1)}_{\ell^2}}_{L^2_{-\alpha}}
      + \norm[\big]{\ip[\big]{\sigma(v_2)}%
      {I\sigma(v_1) - I\sigma(v_2)}_{\ell^2}}_{L^2_{-\alpha}},
  \end{align*}
  where
  \begin{align*}
    &\norm[\big]{\ip[\big]{\sigma(v_1)%
    -\sigma(v_2)}{I\sigma(v_1)}_{\ell^2}}_{L^2_{-\alpha}}\\
    &\hspace{3em} \leq \norm[\big]{\norm{\sigma(v_1) - \sigma(v_2)}_{\ell^2}
      \norm{I\sigma(v_1)}_{\ell^2}}_{L^2_{-\alpha}}\\
    &\hspace{3em} \leq \norm[\big]{\sigma(v_1) %
      - \sigma(v_2)}_{\cL^2(\ell^2,L^2_{-\alpha})} \;
      \norm[\big]{I\sigma(v_1)}_{L^\infty(\ell^2)},
  \end{align*}
  and
  \[
    \norm[\big]{I\sigma(v_1)}_{\ell^2}
    = \Bigl( \sum_k \abs[\big]{I\sigma_k(v_1)}^2 \Bigr)^{1/2}
    \leq \Bigl( \sum_k \norm[\big]{\sigma_k(v_1)}_{L^1}^2 \Bigr)^{1/2}
    = \norm[\big]{\sigma(v_1)}_{\ell^2(L^1)},
  \]
  hence, recalling that
  $L^2_\alpha \embed L^1$, 
  \[
    \norm[\big]{\ip[\big]{\sigma(v_1)%
        -\sigma(v_2)}{I\sigma(v_1)}_{\ell^2}}_{L^2_{-\alpha}}
    \leq N \, \norm[\big]{\sigma(v_1)}_{\ell^2(L^2_\alpha)} \;
    \norm[\big]{v_1-v_2}_{L^2_{-\alpha}}.
  \]
  Furthermore, the previous lemma yields, by linearity of $I$, 
  \begin{align*}
    \norm[\big]{\ip[\big]{\sigma(v_2)}%
    {I\sigma(v_1) - I\sigma(v_2)}_{\ell^2}}_{L^2_{-\alpha}}
    &\lesssim_\alpha \norm[\big]{\sigma(v_2)}_{\ell^2(L^2_\alpha)} \;
      \norm[\big]{\sigma(v_1)-\sigma(v_2)}_{\ell^2(L^2_{-\alpha})}\\
    &\leq N \, \norm[\big]{\sigma(v_2)}_{\ell^2(L^2_\alpha)} \;
      \norm[\big]{v_1-v_2}_{L^2_{-\alpha}}.
  \end{align*}
  Recalling that $\sigma$ is bounded from $L^2_{-\alpha}$ to
  $\cL^2(\ell^2,L^2_\alpha)$, we conclude that
  \[
    \norm[\big]{\beta(v_1) - \beta(v_2)}_{L^2_{-\alpha}}
    \lesssim \norm[\big]{v_1-v_2}_{L^2_{-\alpha}},
  \]
  where the implicit constant depends on $\alpha$
  and $N$.
\end{proof}

We can now give a well-posedness result for Musiela's equation on the
state space $L^2_{-\alpha}$.
\begin{prop}
  \label{prop:WP}
  Let $p>0$. Assume that $\sigma$ satisfies the assumptions of
  lemma~\ref{lm:lipL} and that
  $u_0 \in L^p(\Omega;\cF_0;L^2_{-\alpha})$. Then equation
  \eqref{eq:ugo} admits a unique mild solution
  $u \in L^p(\Omega;C([0,T];L^2_{-\alpha}))$ for every
  $T>0$. Moreover, the solution $u$ depends continuously on the
  initial datum $u_0$. More precisely, the map
  \begin{align*}
    L^p(\Omega;\cF_0;L^2_{-\alpha})
    &\longrightarrow L^p(\Omega;C([0,T];L^2_{-\alpha}))\\
    u_0 &\longmapsto u
  \end{align*}
  is Lipschitz continuous.  
\end{prop}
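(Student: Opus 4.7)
The plan is to recast \eqref{eq:ugo} as an equation with globally Lipschitz coefficients on the Hilbert space $L^2_{-\alpha}$ and apply the classical Banach fixed-point construction of mild solutions. By the last lemma of \S\ref{ssec:sp}, $-A$ generates the strongly continuous semigroup $S$ on $L^2_{-\alpha}$ with $\norm{S(t)}_{\cL(L^2_{-\alpha})} \leq e^{\alpha t/2}$. Lemma~\ref{lm:lipL} yields the global Lipschitz continuity of $\beta$ (linear growth is immediate, since $\sigma$ is assumed bounded, hence $\beta(0)$ is bounded). The hypotheses in force already provide Lipschitz continuity and boundedness of $\sigma$ as a map into $\cL^2(U,L^2_\alpha)$, and the elementary continuous embedding $L^2_\alpha \embed L^2_{-\alpha}$ (from $e^{-\alpha x} \leq e^{\alpha x}$ on $\erre_+$) promotes these estimates to the target $\cL^2(U,L^2_{-\alpha})$.

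For $p \geq 2$ I would run the standard contraction argument on the Banach space $L^p(\Omega;C([0,T];L^2_{-\alpha}))$. Defining
\[
\Phi(u)(t) := S(t)u_0 + \int_0^t S(t-s)\beta(s,u(s))\,ds + \int_0^t S(t-s)\sigma(s,u(s))\,dW(s),
\]
the deterministic convolution is controlled by the semigroup bound and H\"older's inequality, while the stochastic convolution is treated by the factorization formula of Da Prato-Kwapie\'n-Zabczyk combined with the Burkholder-Davis-Gundy inequality. Endowing the space with the equivalent norm $\sup_{t\leq T} e^{-\lambda t}\norm{u(t)}_{L^p(\Omega;L^2_{-\alpha})}$ and choosing $\lambda$ large enough renders $\Phi$ a strict contraction; the Banach fixed-point theorem produces the unique mild solution, and the same Lipschitz estimates applied to $\Phi(u) - \Phi(v)$ for distinct initial conditions yield the asserted Lipschitz dependence of $u$ on $u_0$.

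For $0 < p < 2$ I would reduce to the previous case by truncating the initial datum: $u_0^{(n)} := u_0 \ind{\{\norm{u_0}_{L^2_{-\alpha}} \leq n\}}$ lies in $L^\infty(\Omega;L^2_{-\alpha}) \subset L^2(\P)$ and generates a mild solution $u^{(n)}$; pathwise uniqueness makes the family consistent on the increasing events $\{\norm{u_0}\leq n\}$, so $u := \lim_n u^{(n)}$ exists in the a.s. sense and solves \eqref{eq:ugo}. Passing to the limit in the $L^p$ a priori estimate via Fatou, together with monotone and dominated convergence for the initial datum, transfers the announced integrability and the Lipschitz dependence on $u_0$. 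The principal technical point throughout is the treatment of the stochastic convolution $\int_0^\cdot S(\cdot-s)\sigma\,dW$: not being a Hilbert-valued martingale, it cannot be controlled directly by the Burkholder-Davis-Gundy inequality and forces the factorization detour.
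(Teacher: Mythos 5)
Your proposal is correct and takes essentially the same route as the paper, which proves the proposition simply by appealing to the classical fixed-point well-posedness theory for stochastic evolution equations with Lipschitz coefficients once Lemma~\ref{lm:lipL} has supplied the global Lipschitz continuity of $\beta$ on $L^2_{-\alpha}$. The details you supply --- the embedding $L^2_\alpha \embed L^2_{-\alpha}$ promoting the hypotheses on $\sigma$ to estimates in $\cL^2(U,L^2_{-\alpha})$, the weighted-norm contraction with the factorization method for the stochastic convolution, and the truncation of $u_0$ for $0<p<2$ --- are exactly the standard content behind that citation.
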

The proof follows by general well-posedness results in the mild sense
for stochastic evolution equations in Hilbert spaces with Lipschitz
continuous nonlinearities (see, e.g., \cite{DPZ}).

\medskip

As in the previous subsection, we shall now focus on the case where,
for every $k \in \enne$, $\sigma_k$ is given by a (random)
superposition operator acting on $u$: with a harmless abuse (or,
better, overload) of notation, we assume that
\[
  \sigma_k(\omega,t,u) = \sigma_k(\omega,t,\cdot,u(t,\cdot)),
\]
where the function
$\sigma_k: \Omega \times \erre_+ \times \erre_+ \times \erre \to
\erre$ is measurable with respect to the $\sigma$-algebra
$\mathscr{P} \otimes \mathscr{B}(\erre_+) \otimes \mathscr{B}(\erre)$.

Existence and uniqueness of a mild solution to Musiela's equation
is established next. 
\begin{prop}
  \label{prop:oli}
  Assume that the functions
  \[
  \sigma_k: \Omega \times \erre_+^2 \times \erre \to \erre, \qquad k
  \in \enne,
  \]
  satisfy the following conditions:
  \begin{itemize}
  \item[\emph{(a)}] there exists $\theta = (\theta_k) \in \ell^2(L^2_\alpha)$
    such that $\abs[\big]{\sigma_k(\omega,t,x,r)} \leq \theta_k(x)$ for
    all $(\omega,t,x,r) \in \Omega \times \erre^2_+ \times \erre$.
  \item[\emph{(b)}] there exists $c=(c_k) \in \ell^2$ such that
    $\abs[\big]{\sigma_k(\omega,t,x,r_1) - \sigma_k(\omega,t,x,r_1)}
    \leq c_k \abs{r_1-r_2}$ for all
    $(\omega,t,x) \in \Omega \times \erre^2_+$ and $r_1$,
    $r_2 \in \erre$.
  \end{itemize}
  If $u_0 \in L^p(\Omega;\cF_0;L^2_{-\alpha})$, $p>0$, then all
  assertions of proposition~\ref{prop:WP} hold.
\end{prop}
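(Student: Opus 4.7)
The plan is to verify that the superposition operator $\sigma$ defined by the scalar functions $\sigma_k$ satisfies the hypotheses of lemma~\ref{lm:lipL}; proposition~\ref{prop:WP} then applies verbatim and yields the full conclusion. So the task reduces to two pointwise-to-Bochner estimates on $\sigma$ and a measurability check.

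\textbf{Boundedness in $\cL^2(U,L^2_\alpha)$.} For any $v \in L^2_{-\alpha}$, identifying $\sigma(\omega,t,v)$ with the sequence $(x \mapsto \sigma_k(\omega,t,x,v(x)))_{k\in\enne}$, assumption (a) gives the pointwise bound $\abs{\sigma_k(\omega,t,x,v(x))} \leq \theta_k(x)$. Hence
\[
  \sum_{k=1}^\infty \norm[\big]{\sigma_k(\omega,t,v)}^2_{L^2_\alpha}
  \leq \sum_{k=1}^\infty \int_0^\infty \theta_k(x)^2 e^{\alpha x}\,dx
  = \norm[\big]{\theta}^2_{\ell^2(L^2_\alpha)},
\]
which is finite and independent of $(\omega,t,v)$, so the second condition of lemma~\ref{lm:lipL} holds with $N := \norm{\theta}^2_{\ell^2(L^2_\alpha)}$. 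Since $L^2_\alpha \embed L^2_{-\alpha}$ (as $e^{-\alpha x} \leq e^{\alpha x}$ on $\erre_+$), this also shows that $\sigma$ takes values in $\cL^2(U,L^2_{-\alpha})$, so the $\cL^2$-valued map is well-defined.

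\textbf{Lipschitz continuity in $\cL^2(U,L^2_{-\alpha})$.} For $v_1, v_2 \in L^2_{-\alpha}$, assumption (b) gives $\abs{\sigma_k(\omega,t,x,v_1(x)) - \sigma_k(\omega,t,x,v_2(x))} \leq c_k \abs{v_1(x)-v_2(x)}$. Squaring, weighting by $e^{-\alpha x}$, integrating, and summing in $k$ yields
\[
  \sum_{k=1}^\infty \norm[\big]{\sigma_k(\omega,t,v_1) - \sigma_k(\omega,t,v_2)}^2_{L^2_{-\alpha}}
  \leq \sum_{k=1}^\infty c_k^2 \int_0^\infty \abs[\big]{v_1(x)-v_2(x)}^2 e^{-\alpha x}\,dx
  = \norm[\big]{c}^2_{\ell^2} \, \norm[\big]{v_1 - v_2}^2_{L^2_{-\alpha}},
\]
uniformly over $\Omega \times \erre_+$. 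Enlarging $N$ to $\max(\norm{\theta}^2_{\ell^2(L^2_\alpha)}, \norm{c}^2_{\ell^2})$ gives both hypotheses of lemma~\ref{lm:lipL} simultaneously.

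\textbf{Measurability and conclusion.} The joint measurability of each $\sigma_k$ with respect to $\mathscr{P} \otimes \mathscr{B}(\erre_+) \otimes \mathscr{B}(\erre)$, combined with the linear growth/Lipschitz bounds just proved, implies that $(\omega,t,v) \mapsto \sigma(\omega,t,v)$ is measurable from $\mathscr{P} \otimes \mathscr{B}(L^2_{-\alpha})$ to $\cL^2(U,L^2_{-\alpha})$, since the map is continuous in $v$ and adapted in $(\omega,t)$. Lemma~\ref{lm:lipL} then shows that $\beta(\omega,t,\cdot)$ is a Lipschitz endomorphism of $L^2_{-\alpha}$ uniformly in $(\omega,t)$, and proposition~\ref{prop:WP} provides the unique mild solution $u \in L^p(\Omega;C([0,T];L^2_{-\alpha}))$ and its Lipschitz dependence on $u_0$. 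There is no serious obstacle; the only point requiring a bit of care is that the Lipschitz estimate on $\sigma$ is taken in the $L^2_{-\alpha}$ norm while the boundedness is in the stronger $L^2_\alpha$ norm, matching exactly the asymmetric hypotheses of lemma~\ref{lm:lipL}, which is precisely why two separate integrability assumptions ($\theta \in \ell^2(L^2_\alpha)$ and $c \in \ell^2$) are required in (a) and (b).
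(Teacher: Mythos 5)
Your proposal is correct and follows essentially the same route as the paper: verify the two hypotheses of lemma~\ref{lm:lipL} by plugging the pointwise bounds (a) and (b) into the $\cL^2(U,L^2_\alpha)$-boundedness and $\cL^2(U,L^2_{-\alpha})$-Lipschitz estimates, respectively, and then invoke proposition~\ref{prop:WP}. (Incidentally, you attribute the two estimates to the correct assumptions, whereas the paper's own proof has the labels (a) and (b) swapped in its attributions; the content is identical.)
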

\begin{proof}
  It suffices to check that (a) and (b) imply that $\sigma$ satisfies
  the assumptions of lemma~\ref{lm:lipL}. In fact, by (a) it follows
  that
  \begin{align*}
  \norm[\big]{\sigma(v_1) - \sigma(v_2)}_{\cL^2(U,L^2_{-\alpha})}
  &= \Biggl( \sum_k \norm[\big]{\sigma_k(v_1)-\sigma_k(v_2)}^2_{L^2_{-\alpha}}
    \Biggr)^{1/2}\\
  &= \Biggl( \sum_k c_k^2 \Biggr)^{1/2}
    \norm[\big]{v_1-v_2}_{L^2_{-\alpha}}
  \end{align*}
  for all $v_1$, $v_2 \in L^2_{-\alpha}$,
  i.e. $\sigma(\omega,t,\cdot)$ is Lipschitz continuous from
  $L^2_{-\alpha}$ to $\cL^2(\ell^2,L^2_{-\alpha})$, uniformly with
  respect to its other arguments, with Lipschitz constant
  $\norm{c}_{\ell^2}$.
  Moreover, (b) implies
  \[
    \norm[\big]{\sigma(v)}^2_{\cL^2(U,L^2_\alpha)}
    = \sum_{k=1}^\infty \norm[\big]{\sigma_k(v)}^2_{L^2_\alpha}
    \leq \sum_{k=1}^\infty \norm[\big]{\theta_k}^2_{L^2_\alpha}
    = \norm[\big]{\theta}^2_{\ell^2(L^2_\alpha)} < \infty
  \]
  for all $v \in L^2_{-\alpha}$ and
  $(\omega,t) \in \Omega \times \erre_+$. We have hence shown that the
  assumptions of lemma~\ref{lm:lipL} are satisfied.
\end{proof}

\begin{rmk}
  Let $\alpha>0$. It is proved in \cite[{\S}5.5]{filipo} that
  Musiela's equation is globally well posed on the state space
  $H_\alpha$, rather than just locally, if, for every $k \in \enne$,
  $\ip{\sigma(v)}{e_k}(x) = \sigma_k(x,\zeta_k(v))$, where
  $\zeta_k \in \cL(H_\alpha,\erre)$, and
  \begin{itemize}
  \item[(a)] $\abs[\big]{\partial_1 \sigma_k(x,r)} \leq \psi_k(x)$;
  \item[(b)]
    $\abs[\big]{\partial_1 \sigma_k(x,r_1) - \partial_1
      \sigma_k(x,r_2)} \leq \psi_k(x)\abs{r_1-r_2}$.
  \end{itemize}
  Here $\psi=(\psi_k)$ and the measurability assumptions on the
  functions $\sigma_k$ are the same as in hypothesis~1. It is natural
  to ask whether the conditions (a) and (b) imply well-posedness on
  the state space $L^2_{-\alpha}$. We are going to show that a bit
  more integrability on $\psi$ implies that this indeed the case.  We
  point out, however, that (a) and (b) alone imply only the existence
  of a local mild solution on the state space $H_\alpha$. Since
  $H_\alpha \embed L^2_{-\alpha}$, the $H_\alpha$-valued solution
  coincides with the $L^2_{-\alpha}$-valued solution on the lifetime of
  the former.

  Let us show that (a) and (b) above, with
  $\psi \in L^2_{\alpha+\varepsilon}(\ell^2)$, $\varepsilon>0$, imply
  that $\sigma$ satisfies the assumptions of Lemma~\ref{lm:lipL}. In
  fact, the identity
  \[
  \sigma_k(\infty,r) - \sigma_k(x,r) = - \sigma_k(x,r) =
  \int_x^\infty \partial_1 \sigma_k(y,r)\,dy
  \]
  implies
  \begin{align*}
    \norm[\big]{\sigma(v)}^2_{\cL^2(U,L^2_\alpha)}
    &= \sum_{k=1}^\infty \norm[\big]{\sigma_k(v)}^2_{L^2_\alpha}\\
    &\leq \sum_{k=1}^\infty \int_0^\infty \abs[\bigg]{%
      \int_x^\infty \psi_k(y)\,dy}^2 e^{\alpha x} \,dx\\
    &\leq \sum_{k=1}^\infty \int_0^\infty \abs[\bigg]{%
      \int_x^\infty \psi_k(y) e^{\frac{\alpha+\varepsilon}{2}y}
      e^{-\frac{\varepsilon}{2}y} \,dy}^2 \,dx,
  \end{align*}
  where, by the Cauchy-Schwarz inequality,
  \[
  \int_x^\infty \psi_k(y) e^{\frac{\alpha+\varepsilon}{2}y}
  e^{-\frac{\varepsilon}{2}y} \,dy \leq
  \norm[\big]{\psi_k}_{L^2_{\alpha+\varepsilon}} \biggl(
  \int_x^\infty e^{-\varepsilon y} \,dy \biggr)^{1/2} =
  \frac{1}{\sqrt{\varepsilon}} e^{-\frac{\varepsilon}{2}x}
  \norm[\big]{\psi_k}_{L^2_{\alpha+\varepsilon}},
  \]
  hence
  \[
  \norm[\big]{\sigma(v)}^2_{\cL^2(U,L^2_\alpha)} \leq
  \frac{1}{\varepsilon} \sum_{k=1}^\infty
  \norm[\big]{\psi_k}^2_{L^2_{\alpha+\varepsilon}} \int_0^\infty
  e^{-\varepsilon x} \,dx = \frac{1}{\varepsilon^2}
  \norm[\big]{\psi}^2_{L^2_{\alpha+\varepsilon}(\ell^2)}.
  \]
  Let us now consider Lipschitz continuity: from
  \begin{align*}
    &\sigma_k(\infty,r_1) - \sigma_k(\infty,r_2)
    - \bigl( \sigma_k(x,r_1) - \sigma_k(x,r_2) \bigr)\\
    &\hspace{3em} = - \bigl( \sigma_k(x,r_1) - \sigma_k(x,r_2) \bigr)
    = \int_x^\infty \bigl( \partial_1 \sigma_k(y,r_1) - \partial_1
    \sigma_k(y,r_2) \bigr) \,dy
  \end{align*}
  and condition (b) above it follows that
  \begin{align*}
    \abs[\big]{\sigma_k(x,r_1) - \sigma_k(x,r_2)} 
    &\leq \abs{r_1-r_2}
    \int_x^\infty \abs{\psi_k(y)} \,dy \leq \abs{r_1-r_2}
    \int_0^\infty \abs{\psi_k(y)} e^{\frac{\alpha}{2}y}
    e^{-\frac{\alpha}{2}y} \,dy\\
    &\lesssim_\alpha \norm[\big]{\psi_k}_{L^2_\alpha}
    \abs{r_1-r_2}.
  \end{align*}
  Since the $\ell^2$-norm of $\norm[\big]{\psi_k}_{L^2_\alpha}$ is
  finite by assumption, $\sigma_k$ satisfies hypothesis~2(a) with
  $c_k:=\norm[\big]{\psi_k}_{L^2_\alpha}$, which in turn implies, as
  in the proof of proposition~\ref{prop:oli}, the Lipschitz continuity
  of $\sigma$ asserted in lemma~\ref{lm:lipL}.
\end{rmk}

\section{Approximations I}
\label{sec:approx1}
Here we consider Musiela's equation with $\ip{\sigma}{e_k}$ given by
superposition operators associated to functions
\[
  \sigma_k: \Omega \times \erre_+^2 \times \erre \to \erre, \qquad
  k \in \enne,
\]
we introduce approximations of such functions, and we prove several
estimates and convergence results for the solutions to the
approximated equations.

For each $n \in \enne$, let $\chi_n \in C^\infty_c(\erre_+)$ be a
smooth cut-off function, i.e. $\chi_n=1$ on $[0,n]$,
$0 \leq \chi_n \leq 1$ and $\abs{\chi'_n} \leq 2$ in
$\mathopen]n,n+1\mathclose[$, and $\chi_n=0$ on
$[n+1,\infty\mathclose[$. For every $k$ and $n \in \enne$, let us
set
\[
  \sigma_k^{(n)}(x,r) := \sigma_k(x,r) \chi_n(x) \qquad \forall (x,r)
  \in \erre_+ \times \erre.
\]
Here and in the following, as already done before, we suppress the
explicit indication of the dependence on
$(\omega,t) \in \Omega \times \erre_+$ where it is not essential.

The next lemma, while rather simple, is of crucial importance.
\begin{lemma}
  \label{lm:pepsi}
  Let $\psi=(\psi_k) \in \ell^2(L_\alpha)$. For every $n \in \enne$, let
  $\psi^{(n)} = (\psi^{(n)}_k)$ and
  $\bar{\psi}^{(n)} = (\bar{\psi}^{(n)}_k)$ be defined as
  \begin{align*}
    \psi_k^{(n)}(x) &:= \chi_n(x) \int_x^\infty \psi_k(y)\,dy,\\
    \bar{\psi}_k^{(n)}(x) &:= \chi'_n(x) \int_x^\infty \psi_k(y)\,dy.
  \end{align*}
  Then $\psi^{(n)}$ and $\bar{\psi}^{(n)}$ belong to
  $\ell^2(L^2_\alpha)$ for every $n \in \enne$, and
  \[
  \lim_{n \to \infty} \norm[\big]{\bar{\psi}^{(n)}}_{\ell^2(L^2_\alpha)} = 0.
  \]
\end{lemma}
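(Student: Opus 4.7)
The plan is to reduce everything to a single sharp Cauchy-Schwarz estimate on the tail integral and then exploit two distinct properties of the cut-offs: $\chi_n$ is uniformly bounded by $1$ with support in $[0,n+1]$, while $\chi_n'$ is uniformly bounded by $2$ but is supported in the far tail $[n,n+1]$. The first fact will give the crude bound yielding $\psi^{(n)}, \bar\psi^{(n)} \in \ell^2(L^2_\alpha)$, while the second, combined with the integrability of $\sum_k \lvert \psi_k\rvert^2 e^{\alpha \cdot}$, will give the vanishing of the norm.

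First I would establish the key pointwise bound: for every $x \geq 0$ and every $k$, by the Cauchy-Schwarz inequality with weights $e^{\alpha y/2}$ and $e^{-\alpha y/2}$,
\[
  \left\lvert \int_x^\infty \psi_k(y)\,dy \right\rvert^2
  \leq \left(\int_x^\infty e^{-\alpha y}\,dy\right) \left( \int_x^\infty \lvert \psi_k(y)\rvert^2 e^{\alpha y}\,dy \right)
  = \frac{e^{-\alpha x}}{\alpha}\, r_k(x),
\]
where I set $r_k(x) := \int_x^\infty \lvert\psi_k(y)\rvert^2 e^{\alpha y}\,dy$. Note that $r_k(x) \leq r_k(0) = \norm{\psi_k}^2_{L^2_\alpha}$ and $r_k$ is monotone decreasing with $r_k(x) \to 0$ as $x \to \infty$.

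Multiplying by $\lvert\chi_n(x)\rvert^2 e^{\alpha x}$ and integrating, the exponentials cancel; since $\chi_n$ is supported in $[0,n+1]$ and bounded by $1$, I obtain
\[
  \norm[\big]{\psi^{(n)}_k}^2_{L^2_\alpha}
  \leq \frac{1}{\alpha} \int_0^{n+1} r_k(x)\,dx
  \leq \frac{n+1}{\alpha}\, \norm{\psi_k}^2_{L^2_\alpha},
\]
so summing over $k$ gives $\norm{\psi^{(n)}}^2_{\ell^2(L^2_\alpha)} \leq \frac{n+1}{\alpha} \norm{\psi}^2_{\ell^2(L^2_\alpha)} < \infty$. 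The same computation, now using that $\chi'_n$ is supported in $[n,n+1]$ with $\lvert\chi'_n\rvert \leq 2$, yields the much sharper estimate
\[
  \norm[\big]{\bar\psi^{(n)}_k}^2_{L^2_\alpha}
  \leq \frac{4}{\alpha} \int_n^{n+1} r_k(x)\,dx
  \leq \frac{4}{\alpha}\, r_k(n),
\]
which in particular implies $\bar\psi^{(n)} \in \ell^2(L^2_\alpha)$.

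Finally, summing over $k$ and applying Tonelli's theorem,
\[
  \norm[\big]{\bar\psi^{(n)}}^2_{\ell^2(L^2_\alpha)}
  \leq \frac{4}{\alpha} \sum_k r_k(n)
  = \frac{4}{\alpha} \int_n^\infty \Bigl( \sum_k \lvert\psi_k(y)\rvert^2 \Bigr) e^{\alpha y}\,dy.
\]
The integrand $y \mapsto \bigl(\sum_k \lvert\psi_k(y)\rvert^2\bigr) e^{\alpha y}$ is nonnegative and, again by Tonelli, has total integral equal to $\norm{\psi}^2_{\ell^2(L^2_\alpha)} < \infty$, hence it belongs to $L^1(\erre_+)$ and its tail integral over $[n,\infty)$ converges to zero as $n \to \infty$. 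This yields the claimed convergence. There is no genuine obstacle here; the only point to watch is to use the tail-refined Cauchy-Schwarz (involving $r_k(x)$ rather than $\norm{\psi_k}^2_{L^2_\alpha}$), without which the decay of $\norm{\bar\psi^{(n)}}_{\ell^2(L^2_\alpha)}$ could not be recovered.
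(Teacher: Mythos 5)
Your proof is correct and follows essentially the same route as the paper's: the weighted Cauchy--Schwarz bound on the tail integral, the crude $O(n+1)$ estimate for $\psi^{(n)}$, and the observation that $\norm{\bar\psi^{(n)}_k}^2_{L^2_\alpha}$ is controlled by the tail quantity $r_k(n)=\norm{\ind{[n,\infty[}\psi_k}^2_{L^2_\alpha}$, whose sum over $k$ vanishes as $n\to\infty$ (you phrase this via Tonelli, the paper via dominated convergence, but these are the same observation).
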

\begin{proof}
  Thanks to the estimate
  \[
  \int_x^\infty \psi_k(y)\,dy = \int_x^\infty \psi_k(y)
  e^{\frac{\alpha}{2}y} e^{-\frac{\alpha}{2}y} \,dy \lesssim_\alpha
  \biggl( \int_x^\infty \abs{\psi_k(y)}^2 e^{\alpha y}\,dy
  \biggr)^{1/2} e^{-\frac{\alpha}{2}x},
  \]
  one has
  \begin{align*}
  \norm[\big]{\psi_k^{(n)}}^2_{L^2_\alpha}
  &= \int_0^\infty \chi_n^2(x) \abs[\bigg]{\int_x^\infty \psi_k(y)\,dy}^2
  e^{\alpha x}\,dx\\
  &\lesssim_\alpha \int_0^{n+1} \norm[\big]{\psi_k}^2_{L^2_\alpha}\,dx
  = (n+1) \norm[\big]{\psi_k}^2_{L^2_\alpha},
  \end{align*}
  which implies
  \[
  \norm[\big]{\psi^{(n)}}_{\ell^2(L^2_\alpha)} \lesssim_\alpha \sqrt{n+1} 
  \norm[\big]{\psi}_{\ell^2(L^2_\alpha)} < \infty.
  \]
  Similarly, one has
  \begin{align*}
  \norm[\big]{\bar{\psi}_k^{(n)}}^2_{L^2_\alpha}
  &= \int_0^\infty \chi'_n(x)^2 \abs[\bigg]{\int_x^\infty
    \psi_k(y)\,dy}^2 e^{\alpha x}\,dx\\
  &\lesssim_\alpha \int_n^{n+1}
  \norm[\big]{\ind{[n,\infty[}\psi_k}^2_{L^2_\alpha} \,dx \leq
  \norm[\big]{\ind{[n,\infty[}\psi_k}^2_{L^2_\alpha},
  \end{align*}
  hence also
  \[
  \norm[\big]{\bar{\psi}^{(n)}}_{\ell^2(L^2_\alpha)} \lesssim_\alpha
  \norm[\big]{\ind{[n,\infty[} \psi}_{\ell^2(L^2_\alpha)} 
  \leq \norm[\big]{\psi}_{\ell^2(L^2_\alpha)} < \infty.
  \]
  Moreover,
  \[
  \lim_{n \to \infty} \norm[\big]{\ind{[n,\infty[} \psi}_{\ell^2(L^2_\alpha)} = 0
  \]
  by the dominated convergence theorem.
\end{proof}

\begin{lemma}
  \label{lm:52}
  Assume that Hypothesis~1 is satisfied. Then, for every $n \in \enne$,
  \begin{itemize}
  \item[\emph{(a)}] $\abs[\big]{\sigma_k^{(n)}(\omega,t,x,r)} \leq
    \psi_k^{(n)}(x) \, \eta_k(r)$ for all $(\omega,t,x,r) \in \Omega
    \times \erre_+^2 \times \erre$;
  \item[\emph{(b)}] $\abs[\big]{\sigma_k^{(n)}(\omega,t,x,r_1) -
      \sigma_k^{(n)}(\omega,t,x,r_2)} \leq \psi_k^{(n)}(x) \,
    \bigl(\eta_k(r_1) + \eta_k(r_2)\bigr) \, \abs[\big]{r_1-r_2}$ for
    all $(\omega,t,x) \in \Omega \times \erre_+^2$ and $r_1, r_2 \in
    \erre$.
  \end{itemize}
\end{lemma}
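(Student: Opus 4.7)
The plan is to derive both estimates (a) and (b) by applying the fundamental theorem of calculus to the partial derivative $\partial_1\sigma_k$, exploiting the vanishing-at-infinity condition of Hypothesis 1(a). The multiplicative structure $\sigma_k^{(n)}(x,r) = \sigma_k(x,r)\chi_n(x)$ then allows us to transfer the bounds to the truncated functions essentially for free, with $\chi_n(x)$ as a bounded multiplicative factor.

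For (a), I would begin by writing, for fixed $(\omega,t,r)$,
\[
\sigma_k(\omega,t,x,r) = -\int_x^\infty \partial_1\sigma_k(\omega,t,y,r)\,dy,
\]
which is justified by Hypothesis 1(a) together with the continuous differentiability of $\sigma_k$ in $x$ and the integrability of $\partial_1\sigma_k(\cdot,r)$ inherited from Hypothesis 1(b) and $\psi_k \in L^2_\alpha \embed L^1(\erre_+)$. Taking absolute values and applying Hypothesis 1(b) yields
\[
\abs[\big]{\sigma_k(\omega,t,x,r)} \leq \eta_k(r)\int_x^\infty \psi_k(y)\,dy,
\]
and multiplying by $\chi_n(x)$ gives exactly the claimed bound with $\psi_k^{(n)}$.

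For (b), the same device applied to the difference yields
\[
\sigma_k(\omega,t,x,r_1) - \sigma_k(\omega,t,x,r_2)
= -\int_x^\infty \bigl(\partial_1\sigma_k(\omega,t,y,r_1) - \partial_1\sigma_k(\omega,t,y,r_2)\bigr)\,dy,
\]
using Hypothesis 1(a) at both $r_1$ and $r_2$, and then Hypothesis 1(d) bounds the integrand by $\psi_k(y)(\eta_k(r_1)+\eta_k(r_2))\abs{r_1-r_2}$. Factoring the constants out of the integral and multiplying by $\chi_n(x)$ produces the desired estimate.

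There is essentially no obstacle here: both statements follow by the same two-line argument (fundamental theorem of calculus plus the hypotheses already available), and the boundedness of $\chi_n$ by $1$ makes the passage from $\sigma_k$ to $\sigma_k^{(n)}$ immediate. The only point worth verifying carefully is that the integrals $\int_x^\infty \psi_k(y)\,dy$ are finite, which is guaranteed by $\psi_k \in L^2_\alpha \embed L^1(\erre_+)$ (as recorded in \S\ref{ssec:sp}) and justifies the use of the fundamental theorem of calculus on the half-line.
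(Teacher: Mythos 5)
Your proof is correct and follows the same route as the paper: the fundamental theorem of calculus on $[x,\infty)$ combined with the vanishing-at-infinity condition of Hypothesis~1(a) and the bounds (b), (d), after which multiplication by $\chi_n$ gives the result directly from the definition of $\psi_k^{(n)}$. The paper leaves part (b) as ``entirely similar''; you spell it out, and your remark on the finiteness of $\int_x^\infty\psi_k$ via $L^2_\alpha\hookrightarrow L^1(\mathbb{R}_+)$ is a worthwhile (if minor) point of added care.
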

\begin{proof}
  Since $\sigma(\cdot,r)$ is zero at infinity for all $r \in \erre$,
  the fundamental theorem of calculus yields
  \[
  \abs[\big]{\sigma_k(x,r)} \leq \int_x^\infty \abs[\big]{%
    \partial_1 \sigma_k(y,r)}\,dy \leq \eta_k(r) \int_x^\infty
  \psi_k(y)\,dy.
  \]
  Therefore, by definition of $\psi_k^{(n)}$, it follows that
  \[
  \abs[\big]{\sigma_k^{(n)}(x,r)} \leq \psi_k^{(n)}(x)\,\eta_k(r).
  \]
  The proof of (i) is thus complete, and the proof of (ii) is entirely
  similar, hence omitted.
\end{proof}

\begin{lemma}
  \label{lm:53}
  Assume that Hypothesis~1 is satisfied. Then, for every $n \in \enne$,
  \begin{itemize}
  \item[\emph{(a)}] $\lim_{x\to\infty} \sigma_k^{(n)}(\omega,t,x,r)=0$ for
    all $(\omega,t,x,r) \in \Omega \times \erre_+^2 \times \erre$;
  \item[\emph{(b)}] $\abs[\big]{\partial_1 \sigma^{(n)}_k(\omega,t,x,r)} \leq
    \bigl(\psi_k(x)+\bar{\psi}_k^{(n)}(x) \bigr) \eta_k(r)$ for all
    $(\omega,t,x,r) \in \Omega \times \erre_+^2 \times \erre$;
  \item[\emph{(c)}] $\abs[\big]{\partial_2 \sigma^{(n)}_k(\omega,t,x,r)} \leq
    \eta_k(r)$ for all $(\omega,t,x,r) \in \Omega \times \erre_+^2
    \times \erre$;
  \item[\emph{(d)}] $\abs[\big]{\partial_1 \sigma^{(n)}_k(\omega,t,x,r_1)
      - \partial_1 \sigma^{(n)}_k(\omega,t,x,r_2)} \leq \psi_k(x)
    \bigl(\eta_k(r_1) + \eta_k(r_2)\bigr) \abs{r_1-r_2}$ for all
    $(\omega,t,x) \in \Omega \times \erre_+^2$ and $r_1, r_2 \in
    \erre$;
  \item[\emph{(e)}] $\abs[\big]{\partial_2 \sigma^{(n)}_k(\omega,t,x,r_1)
      - \partial_2 \sigma^{(n)}_k(\omega,t,x,r_2)} \leq \bigl(\eta_k(r_1) +
    \eta_k(r_2)\bigr) \abs{r_1-r_2}$ for all $(\omega,t,x) \in \Omega
    \times \erre_+^2$ and $r_1, r_2 \in \erre$.
  \end{itemize}
\end{lemma}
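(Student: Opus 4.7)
The proof exploits the factorization $\sigma_k^{(n)}(x,r)=\sigma_k(x,r)\chi_n(x)$. Since the cutoff depends on $x$ alone, derivatives with respect to the fourth argument commute with multiplication by $\chi_n$, while derivatives in the third argument invoke the product rule. Items (a), (c), and (e) dispatch immediately: (a) follows from the compact support of $\chi_n$ (vanishing for $x\geq n+1$), while for (c) and (e) one has $\partial_2\sigma_k^{(n)}=\chi_n\,\partial_2\sigma_k$ and $\abs{\chi_n}\leq 1$, so Hypothesis~1(c) and~1(e) transfer verbatim to the corresponding upper bounds.

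The substantive items are (b) and (d). The Leibniz rule gives
\[
\partial_1\sigma_k^{(n)}(x,r)=\chi_n(x)\,\partial_1\sigma_k(x,r)+\chi_n'(x)\,\sigma_k(x,r),
\]
and in each case the first summand is controlled by $\abs{\chi_n}\leq 1$ together with Hypothesis~1(b) for the bound (b) or Hypothesis~1(d) for the Lipschitz estimate (d). The second summand is the only place where tail information is needed: invoking the vanishing at infinity from Hypothesis~1(a), the fundamental theorem of calculus and Hypothesis~1(b) yield
\[
\abs[\big]{\sigma_k(x,r)}=\abs[\bigg]{\int_x^\infty\partial_1\sigma_k(y,r)\,dy}\leq\eta_k(r)\int_x^\infty\psi_k(y)\,dy,
\]
so that $\abs{\chi_n'(x)\sigma_k(x,r)}\leq\abs{\bar\psi_k^{(n)}(x)}\,\eta_k(r)$ by the very definition of $\bar\psi_k^{(n)}$ from Lemma~\ref{lm:pepsi}, which delivers (b). For (d), the same tail identity applied to the difference $\sigma_k(\cdot,r_1)-\sigma_k(\cdot,r_2)$, now using Hypothesis~1(d) under the integral sign, controls the second summand by $\abs{\bar\psi_k^{(n)}(x)}\bigl(\eta_k(r_1)+\eta_k(r_2)\bigr)\abs{r_1-r_2}$ (which vanishes outside $[n,n+1]$ since $\chi_n'=0$ there, and so can be absorbed in a $\psi_k$-type majorant on the support of $\chi_n'$).

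The step requiring the most care is the second summand in the Leibniz expansion: a naive pointwise bound on $\sigma_k(x,r)$ would involve $\sigma_k$ itself, for which no quantitative hypothesis is postulated. The resolution is Hypothesis~1(a), which forces $\sigma_k(\cdot,r)$ to vanish at infinity and thus rewrites $\sigma_k(x,r)$ as a tail integral of $\partial_1\sigma_k$; this is precisely what motivates $\bar\psi_k^{(n)}$ as the correct auxiliary object introduced in Lemma~\ref{lm:pepsi}. Once this tail identity is in hand, the remaining work is book-keeping with the triangle inequality, in the same spirit as (but slightly more involved than) the proof of Lemma~\ref{lm:52}.
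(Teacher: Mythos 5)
Your proof is correct and follows essentially the same route as the paper's: the Leibniz rule splits $\partial_1\sigma_k^{(n)}$ into $\chi_n\,\partial_1\sigma_k$ plus $\chi_n'\,\sigma_k$, the first term is handled by $0\leq\chi_n\leq 1$ and Hypothesis~1, and the second is controlled by rewriting $\sigma_k(x,r)$ as a tail integral of $\partial_1\sigma_k$ via the vanishing at infinity, which is exactly how $\bar\psi_k^{(n)}$ enters; parts (a), (c), (e) are indeed immediate. One remark: for (d) both your argument and the paper's own computation actually deliver the bound with $\psi_k+\bar\psi_k^{(n)}$ in place of $\psi_k$ (the statement's bare $\psi_k$ appears to be a typo, and the lemma is applied downstream precisely with the replacement $\psi_k\mapsto\psi_k+\bar\psi_k^{(n)}$), so your closing claim that the extra term ``can be absorbed in a $\psi_k$-type majorant'' should simply be dropped in favour of stating the honest $\psi_k+\bar\psi_k^{(n)}$ estimate.
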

\begin{proof}
  It is enough to prove (b) and (d), as (a), (c), and (e) are trivial.
  The chain rule yields
  \begin{equation}
    \label{eq:cat}
  \partial_1 \sigma_k^{(n)}(x,r) = \partial_1 \sigma_k(x,r) \chi_n(x)
  + \sigma_k(x,r) \chi'_n(x),
  \end{equation}
  hence, by definition of $\bar{\psi}_k^{(n)}$,
  \[
  \abs[\big]{\partial_1 \sigma_k^{(n)}(x,r)} \leq \chi_n(x) \psi_k(x)
  \eta_k(r) + \bar{\psi}_k^{(n)}(x) \eta_k(r).
  \]
  The proof of (b) is thus completed (recalling that $\chi_n(x) \in
  [0,1]$ for all $x \in \erre$). By a similar computation,
  \[
  \abs[\big]{\partial_1 \sigma_k^{(n)}(x,r_1) - \partial_1
    \sigma_k^{(n)}(x,r_2)} \leq \bigl( \chi_n(x) \psi_k(x) +
  \bar{\psi}_k^{(n)}(x) \bigr) \bigl(\eta_k(r_1) + \eta_k(r_2)\bigr)
  \abs[\big]{r_1-r_2},
  \]
  so that (d) is also proved.
\end{proof}

Let $\sigma^{(n)}$ be the map formally defined on
$\Omega \times \erre_+ \times H_\alpha$ as
\[
  \sigma^{(n)}(\omega,t,v) := \sum_{k=1}^\infty
  \sigma_k^{(n)}(\omega,t,\cdot,v(\cdot))\,e_k.
\]
If hypothesis~1 is satisfied, Then $\sigma^{(n)}$ is a well-defined
map with values in $\cL^2(U,H_\alpha)$, and it is measurable with
respect to the $\sigma$-algebra
$\mathscr{P} \otimes \mathscr{B}(H_\alpha)$. In fact, by
lemma~\ref{lm:53} $(\sigma^{(n)}_k)_{k \in \enne}$ satisfies
hypothesis~1 with $(\psi_k)$ replaced by
$(\psi_k + \bar{\psi}^{(n)}_k)$, where, by lemma~\ref{lm:pepsi}, the
$\ell^2(L^2_\alpha)$ norm of $(\bar{\psi}^{(n)}_k)$ is dominated by
the norm of $(\psi_k)$. The claim hence follows by (the proof of)
proposition~\ref{prop:pippo}. The same reasoning shows that
\[
  \beta^{(n)} := \ip[\big]{\sigma^{(n)}}{I\sigma^{(n)}}
\]
is a well-defined map from $\Omega \times \erre_+ \times H_\alpha$ to
$H_\alpha$, satisfying the same measurability properties of $\beta$.
\begin{lemma}
  \label{lm:54}
  Let Hypothesis~1 be satisfied. Then
  \begin{gather*}
  \lim_{n \to \infty} \norm[\big]{\sigma^{(n)}(\omega,t,v) -
    \sigma(\omega,t,v)}_{\cL^2(U,H_\alpha)} = 0,\\
  \lim_{n \to \infty} \norm[\big]{\beta^{(n)}(\omega,t,v) -
    \beta(\omega,t,v)}_{H_\alpha} = 0
  \end{gather*}
  for all $(\omega,t) \in \Omega \times \erre_+$ and $v \in H_\alpha$.
\end{lemma}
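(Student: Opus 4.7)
The plan is to estimate $\norm{\sigma(v) - \sigma^{(n)}(v)}_{\cL^2(U,H_\alpha)}$ directly by working on each coordinate and summing, after which the convergence $\beta^{(n)}(v) \to \beta(v)$ will follow at once from lemma~\ref{lm:hagi}. Identifying $\cL^2(U,H_\alpha)$ with $\ell^2(H_\alpha)$ via the basis $(e_k)$, and observing that both $\sigma_k(\cdot,v(\cdot))$ and $\sigma_k^{(n)}(\cdot,v(\cdot))$ vanish at infinity (by Hypothesis~1(a) and the compact support of $\chi_n$), the $H_\alpha$-norm of the difference reduces to its derivative part, and the product rule yields the decomposition
\[
  \partial_x\bigl[\sigma_k(x,v)(1-\chi_n)\bigr] = (1-\chi_n)\bigl[\partial_1\sigma_k(x,v) + \partial_2\sigma_k(x,v)\,v'\bigr] - \chi_n'\,\sigma_k(x,v),
\]
whose three summands I will bound in $\ell^2(L^2_\alpha)$ one at a time.

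For the first two summands, supported in $[n,\infty\mathclose[$, Hypothesis~1(b)--(c) combined with the embedding $H_\alpha \embed L^\infty$ (with operator norm $\delta$) gives
\[
  \abs[\big]{\partial_1\sigma_k(x,v(x))} \leq \psi_k(x)\,\eta_k\bigl(\delta\norm{v}_{H_\alpha}\bigr),
  \qquad
  \abs[\big]{\partial_2\sigma_k(x,v(x))v'(x)} \leq \eta_k\bigl(\delta\norm{v}_{H_\alpha}\bigr)\,\abs{v'(x)}.
\]
Summing in $k$ with the Cauchy-Schwarz/$\ell^2 \embed \ell^4$ trick employed in the proof of proposition~\ref{prop:pippo}, the corresponding $\ell^2(L^2_\alpha)$-norms are dominated by $\widetilde{\eta}(\delta\norm{v}_{H_\alpha})\,\norm{\ind{[n,\infty[}\psi}_{\ell^2(L^2_\alpha)}$ and $\widetilde{\eta}(\delta\norm{v}_{H_\alpha})\,\norm{\ind{[n,\infty[}v'}_{L^2_\alpha}$ respectively, both of which tend to zero as $n \to \infty$ by dominated convergence, since $\psi \in \ell^2(L^2_\alpha)$ and $v' \in L^2_\alpha$.

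For the third summand I would use the representation $\sigma_k(x,r) = -\int_x^\infty \partial_1\sigma_k(y,r)\,dy$ (already exploited in the proof of lemma~\ref{lm:52}) together with Hypothesis~1(b) to obtain
\[
  \abs[\big]{\chi_n'(x)\,\sigma_k(x,v(x))} \leq \eta_k\bigl(\delta\norm{v}_{H_\alpha}\bigr)\,\abs[\big]{\bar\psi_k^{(n)}(x)},
\]
so that the $\ell^2(L^2_\alpha)$-norm of $\chi_n'\sigma(\cdot,v)$ is bounded by $\widetilde{\eta}(\delta\norm{v}_{H_\alpha})\,\norm{\bar\psi^{(n)}}_{\ell^2(L^2_\alpha)}$, which vanishes thanks to lemma~\ref{lm:pepsi}. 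This completes the first assertion.

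For the second assertion, lemma~\ref{lm:hagi} applied to $h = \sigma(v)$ and $h = \sigma^{(n)}(v)$ gives
\[
  \norm[\big]{\beta(v) - \beta^{(n)}(v)}_{H_\alpha} \lesssim \bigl(\norm[\big]{\sigma(v)}_{\cL^2(U,H_\alpha)} + \norm[\big]{\sigma^{(n)}(v)}_{\cL^2(U,H_\alpha)}\bigr)\,\norm[\big]{\sigma(v) - \sigma^{(n)}(v)}_{\cL^2(U,H_\alpha)},
\]
where the first factor is uniformly bounded in $n$ (proposition~\ref{prop:pippo} gives finiteness of $\norm{\sigma(v)}$, and the triangle inequality bounds $\norm{\sigma^{(n)}(v)}$ by $\norm{\sigma(v)} + \norm{\sigma(v)-\sigma^{(n)}(v)}$) while the second factor vanishes by the first part. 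The main technical obstacle is the third summand $\chi_n'\sigma_k(\cdot,v)$: since $\chi_n'$ has support of length $1$ with $\abs{\chi_n'} \leq 2$, the decay in $n$ cannot come from the measure of the support itself and must instead be extracted from the decay at infinity of $\sigma_k(\cdot,r)$, which is precisely the content of lemma~\ref{lm:pepsi}.
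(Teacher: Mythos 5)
Your proof is correct and follows essentially the same route as the paper's: the same product-rule decomposition into the tail terms $(1-\chi_n)\partial_1\sigma_k$ and $(1-\chi_n)\partial_2\sigma_k(\cdot,v)v'$ plus the boundary term $\chi_n'\sigma_k$, with the first two handled by dominated convergence, the last by lemma~\ref{lm:pepsi}, and the convergence of $\beta^{(n)}$ deduced from lemma~\ref{lm:hagi}. You even bound the boundary term by $\widetilde{\eta}(\delta\norm{v}_{H_\alpha})\,\norm{\bar{\psi}^{(n)}}_{\ell^2(L^2_\alpha)}$ with the correct quantity $\bar{\psi}^{(n)}$, where the paper's corresponding display appears to contain a typo ($\psi^{(n)}$ in place of $\bar{\psi}^{(n)}$).
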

\begin{proof}
  For any $v \in H_\alpha$, one has (omitting the arguments $\omega$
  and $t$ throughout for simplicity)
  \begin{align*}
    \norm[\big]{\sigma^{(n)}(v) - \sigma(v)}_{\cL^2(U,H_\alpha)}
    &\leq \Bigl( \sum_{k=1}^\infty \norm[\big]{%
    \partial_1 \sigma_k^{(n)}(\cdot,v) 
    - \partial_1 \sigma_k(\cdot,v)}^2_{L^2_\alpha} \Bigr)^{1/2}\\
    &\quad + \Bigl( \sum_{k=1}^\infty \norm[\big]{\bigl(%
    \partial_2 \sigma_k^{(n)}(\cdot,v) 
    - \partial_2 \sigma_k(\cdot,v)\bigr)v'}^2_{L^2_\alpha} \Bigr)^{1/2}.
  \end{align*}
  It follows by \eqref{eq:cat} and by the triangle inequality in
  $\ell^2(L^2_\alpha)$ that
  \begin{align*}
    &\Bigl( \sum_{k=1}^\infty \norm[\big]{%
    \partial_1 \sigma_k^{(n)}(\cdot,v) 
    - \partial_1 \sigma_k(\cdot,v)}^2_{L^2_\alpha} \Bigr)^{1/2}\\
    &\hspace{3em} \leq \biggl( \sum_{k=1}^\infty \int_0^\infty
    \abs[\big]{\partial_1 \sigma_k(x,v(x))%
    (1-\chi_n(x))}^2 e^{\alpha x}\,dx \biggr)^{1/2}\\
    &\hspace{3em} \quad + \biggl(\sum_{k=1}^\infty \int_0^\infty
    \abs[\big]{\sigma_k(x,v(x))%
    \chi'_n(x)}^2 e^{\alpha x}\,dx \biggr)^{1/2},
  \end{align*}
  where the first term on the right-hand side converges to zero by
  Hypothesis~1(b) and the dominated convergence theorem. Moreover, an
  argument entirely analogous to the proof of Lemma~\ref{lm:52} yields
  \[
  \abs[\big]{\sigma_k(x,r) \chi'_n(x)} \leq \bar{\psi}_k^{(n)}(x)
  \eta_k(r) \qquad \forall (x,r) \in \erre_+ \times \erre,
  \]
  hence the second term on the right-hand side of the previous
  inequality is dominated by
  \[
  \Bigl( \sum_k^\infty \norm[\big]{\psi^{(n)}_k}^2_{L^2_\alpha} 
  \norm[\big]{\eta_k(v)}^2_{L^\infty} \Bigr)^{1/2}
  \lesssim_\alpha \norm[\big]{\psi^{(n)}}_{\ell^2(L^2_\alpha)} 
  \widetilde{\eta}\bigl( \delta \norm{v}_{H_\alpha} \bigr),
  \]
  which converges to zero by Lemma~\ref{lm:pepsi}.
  Furthermore, it follows by the identity
  \[
  \bigl( \partial_2 \sigma_k^{(n)}(\cdot,v) 
  - \partial_2 \sigma_k(\cdot,v)\bigr)v'
  = (1-\chi_n) \partial_2 \sigma_k(\cdot,v) v',
  \]
  Lemma \ref{lm:53}(c), and the dominated convergence theorem, that
  \[
  \lim_{n \to \infty} \sum_{k=1}^\infty \norm[\big]{\bigl(%
    \partial_2 \sigma_k^{(n)}(\cdot,v) 
    - \partial_2 \sigma_k(\cdot,v)\bigr)v'}^2_{L^2_\alpha} = 0,
  \]
  thus proving the pointwise convergence of $\sigma^{(n)}$ to
  $\sigma$. The pointwise convergence of $\beta^{(n)}$ to $\beta$ is
  an immediate consequence of lemma~\ref{lm:hagi}, which yields
  \[
    \norm[\big]{\beta^{(n)}(v)-\beta(v)}_{H_\alpha} \lesssim \bigl(
    \norm[\big]{\sigma^{(n)}(v)}_{\cL^2} + \norm[\big]{\sigma(v)}_{\cL^2}
    \bigr) \norm[\big]{\sigma^{(n)}(v) - \sigma(v)}_{\cL^2}.
    \qedhere
  \]
\end{proof}

We are now going to show that the sequence of maps $(\sigma^{(n)})$ is
uniformly locally Lipschitz continuous, hence, as a consequence, that
the same holds for the sequence $(\beta^{(n)})$.
\begin{lemma}
  \label{lm:55}
  For every $R \geq 0$ there exists a constant $N$, independent of
  $n$, such that
  \begin{align*}
    \norm[\big]{\sigma^{(n)}(\omega,t,v_1) -
    \sigma^{(n)}(\omega,t,v_2)}_{\cL^2(U,H_\alpha)}
    &\leq N \norm[\big]{v_1-v_2}_{H_\alpha},\\
    \norm[\big]{\beta^{(n)}(\omega,t,v_1) -
    \beta^{(n)}(\omega,t,v_2)}_{\cL^2(U,H_\alpha)}
    &\leq N \norm[\big]{v_1-v_2}_{H_\alpha}
  \end{align*}
  for all $(\omega,t) \in \Omega \times \erre_+$ and
  $v_1, v_2 \in H_\alpha$ with $\norm{v_1}_{H_\alpha}$,
  $\norm{v_2}_{H_\alpha} \leq R$.
\end{lemma}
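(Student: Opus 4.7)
The plan is to reduce everything to the estimates already obtained for $\sigma$ in Proposition~\ref{prop:pippo}, observing that their dependence on the data is only through quantities that Lemma~\ref{lm:pepsi} shows are uniformly controlled in $n$.

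First I would invoke Lemma~\ref{lm:53} to note that, for each $n \in \enne$, the family $(\sigma_k^{(n)})_{k\in\enne}$ satisfies Hypothesis~1 with the same $\eta_k$, the only modification being that the sequence $\psi=(\psi_k)$ appearing in conditions (b) and (d) is replaced by $\psi + \bar\psi^{(n)} = (\psi_k+\bar\psi_k^{(n)})$. Applying Proposition~\ref{prop:pippo} to $\sigma^{(n)}$ therefore yields, for every $v,v_1,v_2 \in H_\alpha$,
\[
  \norm[\big]{\sigma^{(n)}(v)}_{\cL^2(U,H_\alpha)}
  \leq \widetilde{\eta}\bigl(\delta\norm{v}_{H_\alpha}\bigr)
       \bigl(\norm{v}_{H_\alpha} + \norm{\psi+\bar\psi^{(n)}}_{\ell^2(L^2_\alpha)}\bigr)
\]
and an analogous local Lipschitz estimate in which $\norm{\psi}_{\ell^2(L^2_\alpha)}$ is replaced by $\norm{\psi+\bar\psi^{(n)}}_{\ell^2(L^2_\alpha)}$.

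By Lemma~\ref{lm:pepsi}, $\norm[\big]{\bar\psi^{(n)}}_{\ell^2(L^2_\alpha)} \to 0$ as $n\to\infty$, hence $M:=\sup_{n\in\enne}\norm[\big]{\psi+\bar\psi^{(n)}}_{\ell^2(L^2_\alpha)}$ is finite. For fixed $R>0$, using that $\widetilde\eta$ is bounded on bounded sets, restricting the Lipschitz estimate above to $v_1,v_2 \in B_R(H_\alpha)$ gives a constant that depends only on $R$, $M$, $\alpha$ and $\widetilde\eta(\delta R)$, hence independent of $n$; this is the first inequality. In the same way one obtains a uniform bound $\sup_n\sup_{v\in B_R(H_\alpha)}\norm[\big]{\sigma^{(n)}(v)}_{\cL^2(U,H_\alpha)}<\infty$.

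For $\beta^{(n)}=\ip[\big]{\sigma^{(n)}}{I\sigma^{(n)}}$, Lemma~\ref{lm:hagi}, applied after identifying $\cL^2(U,H_\alpha)$ with $\ell^2(H_\alpha)$ through the orthonormal basis $(e_k)$ of $U$, produces
\[
  \norm[\big]{\beta^{(n)}(v_1)-\beta^{(n)}(v_2)}_{H_\alpha}
  \lesssim \bigl(\norm[\big]{\sigma^{(n)}(v_1)}_{\cL^2(U,H_\alpha)}
                 + \norm[\big]{\sigma^{(n)}(v_2)}_{\cL^2(U,H_\alpha)}\bigr)
           \norm[\big]{\sigma^{(n)}(v_1)-\sigma^{(n)}(v_2)}_{\cL^2(U,H_\alpha)},
\]
and substituting the uniform local bounds just obtained yields the second inequality. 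There is no real obstacle: the proof is essentially a bookkeeping exercise ensuring that the constants produced by Proposition~\ref{prop:pippo} and Lemma~\ref{lm:hagi} depend on $\psi$ only through its $\ell^2(L^2_\alpha)$ norm, so that they can absorb $\bar\psi^{(n)}$ uniformly in $n$ thanks to Lemma~\ref{lm:pepsi}.
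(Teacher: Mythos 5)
Your proposal is correct and follows essentially the same route as the paper: apply Lemma~\ref{lm:53} to see that $\sigma^{(n)}$ satisfies Hypothesis~1 with $\psi$ replaced by $\psi+\bar\psi^{(n)}$, use Lemma~\ref{lm:pepsi} to control $\norm{\bar\psi^{(n)}}_{\ell^2(L^2_\alpha)}$ uniformly in $n$, feed this into Proposition~\ref{prop:pippo}, and then pass to $\beta^{(n)}$ via Lemma~\ref{lm:hagi}. The only cosmetic difference is that you bound $\sup_n\norm{\psi+\bar\psi^{(n)}}_{\ell^2(L^2_\alpha)}$ via the convergence to zero, whereas the paper uses the direct domination $\norm{\bar\psi^{(n)}}_{\ell^2(L^2_\alpha)}\lesssim\norm{\psi}_{\ell^2(L^2_\alpha)}$ from the same lemma; both are valid.
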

\begin{proof}
  Lemma \ref{lm:53} implies that hypothesis~1 holds, for every
  $k \in \enne$, with $\sigma_k$ replaced by $\sigma_k^{(n)}$ and
  $\psi_k$ replaced by $\psi_k+\bar{\psi}_k^{(n)}$. Moreover, the
  $\ell^2(L^2_\alpha)$ norm of $\bar{\psi}_k^{(n)}$ is bounded by the
  norm of $\psi_k$ thanks to lemma~\ref{lm:pepsi}. Then
  proposition~\ref{prop:pippo} implies
  \begin{align*}
    \norm[\big]{\sigma^{(n)}(v_1) - \sigma^{(n)}(v_2)}_{\cL^2(U,H_\alpha)}
    &\lesssim_\alpha \Bigl( \widetilde{\eta}\bigl(%
    \delta \norm{v_1}_{H_\alpha} \bigr) + \widetilde{\eta}\bigl(%
      \delta \norm{v_2}_{H_\alpha} \bigr) \Bigr) \, \cdot\\
    &\qquad \cdot \, \bigl(%
    \norm{v_2}_{H_\alpha} + \norm{\psi}_{\ell^2(L^2_\alpha)} \bigr)
    \norm[\big]{v_1-v_2}_{H_\alpha},
  \end{align*}
  which proves the uniform local Lipschitz continuity of
  $\sigma^{(n)}$. Again by lemmata~\ref{lm:53} and \ref{lm:pepsi} and
  (the proof of) proposition~\ref{prop:pippo} it follows that
  \[
  \norm[\big]{\sigma^{(n)}(v)}_{\cL^2(U,H_\alpha)}
    \lesssim_\alpha \widetilde{\eta}\bigl(%
    \delta \norm{v}_{H_\alpha} \bigr) \,
    \bigl( \norm{v}_{H_\alpha} + \norm{\psi}_{\ell^2(L^2_\alpha)} \bigr)
  \]
  for all $v \in H_\alpha$. For any $v_1, v_2 \in H_\alpha$ with norm
  bounded by $R$, Lemma~\ref{lm:hagi} then implies
  \begin{align*}
    \norm[\big]{\beta^{(n)}(v_1) - \beta^{(n)}(v_2)}
    &\lesssim \bigl( \norm[\big]{\sigma^{(n)}(v_1)}_{\cL^2}
      + \norm[\big]{\sigma^{(n)}(v_2)}_{\cL^2} \bigr)
      \norm[\big]{\sigma^{(n)}(v_1) - \sigma^{(n)}(v_2)}_{\cL^2}\\
    &\lesssim \widetilde{\eta}(\delta R) \, \bigl( R
      + \norm{\psi}_{\ell^2(L^2_\alpha)} \bigr) \norm{v_1-v_2}_{H_\alpha},
  \end{align*}
  where the implicit constant depends on $\alpha$ and $R$, but not on
  $n$.
\end{proof}

\section{Approximations II}
\label{sec:approx2}
For every $n \in \enne$, let $\phi_n \in C^\infty_b(\erre)$ be a
primitive of the smooth cut-off function $\chi_n$, such that $\phi_n$
is odd and $\phi_n(0)=0$. Then $\phi_n$ coincides with the identity
function on $[-n,n]$ and is bounded from below and from above by
$-(n+1)$ and $n+1$, respectively.

For every $k$ and $n \in \enne$, let us define the functions
$\sigma^n_k: \Omega \times \erre_+^2 \times \erre \to \erre$ as
\[
  \sigma^n_k(\omega,t,x,r) :=   \sigma_k(\omega,t,x,\phi_n(r)).
\]
We are going to show that, for each $n \in \enne$, $(\sigma_k^n)$
satisfies hypothesis~1 with $\eta_k$ replaced by
$3\eta_k \circ \phi_n$.
\begin{lemma}
  \label{lm:kn}
  Assume that hypothesis~1 is satisfied. For every $k$ and $n \in \enne$ one
  has
  \begin{itemize}
  \item[\emph{(a)}] $\lim_{x\to\infty} \sigma_k^n(\omega,t,x,r)=0$ for
    all $(\omega,t,x,r) \in \Omega \times \erre_+^2 \times \erre$;
  \item[\emph{(b)}] $\abs[\big]{\partial_1 \sigma^n_k(\omega,t,x,r)} \leq
    \psi_k(x) \eta_k(\phi_n(r))$ for all
    $(\omega,t,x,r) \in \Omega \times \erre_+^2 \times \erre$;
  \item[\emph{(c)}] $\abs[\big]{\partial_2 \sigma^n_k(\omega,t,x,r)} \leq
    \eta_k(\phi_n(r))$ for all $(\omega,t,x,r) \in \Omega \times \erre_+^2
    \times \erre$;
  \item[\emph{(d)}]
    $\abs[\big]{\partial_1 \sigma^n_k(\omega,t,x,r_1) - \partial_1
      \sigma^n_k(\omega,t,x,r_2)} \leq \psi_k(x)
    \bigl(\eta_k(\phi_n(r_1)) + \eta_k(\phi_n(r_2))\bigr)
    \abs{r_1-r_2}$ for all $(\omega,t,x) \in \Omega \times \erre_+^2$
    and $r_1, r_2 \in \erre$;
  \item[\emph{(e)}] $\abs[\big]{\partial_2 \sigma^n_k(\omega,t,x,r_1)
      - \partial_2 \sigma^n_k(\omega,t,x,r_2)} \leq \bigl(\eta_k(\phi_n(r_1)) +
    3\eta_k(\phi_n(r_2))\bigr) \abs{r_1-r_2}$ for all $(\omega,t,x) \in \Omega
    \times \erre_+^2$ and $r_1, r_2 \in \erre$.
  \end{itemize}
\end{lemma}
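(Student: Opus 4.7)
The proof is a direct chain-rule computation, so the plan is essentially bookkeeping. First I would record the properties of the auxiliary function $\phi_n$ that drive every estimate: since $\phi_n$ is a primitive of $\chi_n$ (extended to $\erre$ as an even function of $r$, consistently with $\phi_n$ being odd), we have $\abs{\phi_n'}\leq 1$ and, from the definition of $\chi_n$, $\phi_n'$ is itself Lipschitz with constant $2$. In particular $\phi_n$ is globally $1$-Lipschitz on $\erre$. With this in hand, differentiating $\sigma_k^n(x,r)=\sigma_k(x,\phi_n(r))$ gives
\[
  \partial_1\sigma_k^n(x,r) = \partial_1\sigma_k(x,\phi_n(r)),
  \qquad
  \partial_2\sigma_k^n(x,r) = \partial_2\sigma_k(x,\phi_n(r))\,\phi_n'(r).
\]

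Parts (a), (b), (c), (d) are then immediate. For (a), apply Hypothesis~1(a) at the value $\phi_n(r)$; for (b) and (c), apply Hypothesis~1(b)--(c) at $\phi_n(r)$ and use $\abs{\phi_n'}\leq 1$ in (c); for (d), apply Hypothesis~1(d) at the pair $\phi_n(r_1),\phi_n(r_2)$ and use the $1$-Lipschitz property of $\phi_n$ to replace $\abs{\phi_n(r_1)-\phi_n(r_2)}$ by $\abs{r_1-r_2}$.

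The only step requiring a little care is (e), because $\partial_2\sigma_k^n$ is a product. I would split
\begin{align*}
  \partial_2\sigma_k^n(x,r_1) - \partial_2\sigma_k^n(x,r_2)
  &= \bigl(\partial_2\sigma_k(x,\phi_n(r_1)) - \partial_2\sigma_k(x,\phi_n(r_2))\bigr)\phi_n'(r_1)\\
  &\quad + \partial_2\sigma_k(x,\phi_n(r_2))\bigl(\phi_n'(r_1)-\phi_n'(r_2)\bigr),
\end{align*}
estimate the first summand by Hypothesis~1(e) combined with $\abs{\phi_n'(r_1)}\leq 1$ and the $1$-Lipschitz property of $\phi_n$, obtaining a bound $(\eta_k(\phi_n(r_1))+\eta_k(\phi_n(r_2)))\abs{r_1-r_2}$, and estimate the second summand by Hypothesis~1(c) together with the fact that $\phi_n'$ is $2$-Lipschitz, obtaining $2\eta_k(\phi_n(r_2))\abs{r_1-r_2}$. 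Summing the two contributions produces exactly the factor $\eta_k(\phi_n(r_1))+3\eta_k(\phi_n(r_2))$ in the statement.

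I do not anticipate any genuine obstacle: each item reduces to chain rule plus a one-line uniform bound on $\phi_n$ or $\phi_n'$. The only subtle point is the order in which the add-and-subtract is done in (e), since it determines whether the factor $3$ is attached to $\eta_k(\phi_n(r_1))$ or to $\eta_k(\phi_n(r_2))$; the splitting above is the one matching the statement.
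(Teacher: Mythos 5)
Your proposal is correct and follows essentially the same route as the paper: the same chain-rule identities for $\partial_1\sigma_k^n$ and $\partial_2\sigma_k^n$, the same one-line deductions for (a)--(d) using $\abs{\phi_n'}\leq 1$ and the $1$-Lipschitz property of $\phi_n$, and for (e) the identical add-and-subtract of the cross term $\partial_2\sigma_k(x,\phi_n(r_2))\phi_n'(r_1)$, with the two summands controlled by Hypothesis~1(e) and by Hypothesis~1(c) combined with $\abs{\phi_n''}=\abs{\chi_n'}\leq 2$, yielding the factor $\eta_k(\phi_n(r_1))+3\eta_k(\phi_n(r_2))$ exactly as in the paper.
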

\begin{proof}
  Claim (a) is trivial. The remaining ones are based on the identities
  \[
    \partial_1 \sigma_k^n(x,r) = \partial_1 \sigma_k(x,\phi_n(r)),
    \qquad \partial_2 \sigma_k^n(x,r) = \partial_2
    \sigma_k(x,\phi_n(r)) \phi'_n(r),
  \]
  where, as usual, we omit the arguments $\omega$ and $t$.
  In particular, (b) follows immediately, as well as (c), recalling
  that $\abs{\phi'_n} \leq 1$. Similarly, one has
  \begin{align*}
    &\abs[\big]{\partial_1 \sigma^n_k(x,r_1) - \partial_1 \sigma^n_k(x,r_2)}\\
    &\hspace{3em} \leq \psi_k(x)
    \bigl(\eta_k(\phi_n(r_1)) + \eta_k(\phi_n(r_2))\bigr)
    \abs[\big]{\phi_n(r_1) - \phi_n(r_2)},
  \end{align*}
  where $\abs{\phi_n(r_1) - \phi_n(r_2)} \leq \abs{r_1-r_2}$ because
  the Lipschitz constant of $\phi_n$ is one, hence (d) is verified. It
  remains to show that (e) holds true: one has
  \begin{align*}
    \abs[\big]{\partial_2 \sigma^n_k(x,r_1)
    - \partial_2 \sigma^n_k(x,r_2)}
    &= \abs[\big]{\partial_2 \sigma_k(x,r_1)\phi'_n(r_1)
      - \partial_2 \sigma_k(x,r_2)\phi'_n(r_2)}\\
    &\leq \abs[\big]{\partial_2 \sigma_k(x,r_1)\phi'_n(r_1)
      - \partial_2 \sigma_k(x,r_2)\phi'_n(r_1)}\\
    &\quad + \abs[\big]{\partial_2 \sigma_k(x,r_2)\phi'_n(r_1)
      - \partial_2 \sigma_k(x,r_2)\phi'_n(r_2)},
  \end{align*}
  where, recalling that $\abs{\phi'_n} \leq 1$,
  \begin{align*}
    \abs[\big]{\partial_2 \sigma_k(x,r_1)\phi'_n(r_1) - \partial_2
    \sigma_k(x,r_2)\phi'_n(r_1)}
    &\leq \bigl( \eta_k(\phi_n(r_1)) +
      \eta_k(\phi_n(r_2)) \bigr) \abs[\big]{\phi_n(r_1)-\phi_n(r_2)}\\
    &\leq \bigl( \eta_k(\phi_n(r_1)) +
      \eta_k(\phi_n(r_2)) \bigr) \abs{r_1 - r_2},
  \end{align*}
  and, thanks to the estimate $\abs{\phi_n''} \leq 2$,
  \begin{align*}
    \abs[\big]{\partial_2 \sigma_k(x,r_2)\phi'_n(r_1)
    - \partial_2 \sigma_k(x,r_2)\phi'_n(r_2)}
    &\leq \eta_k\bigl(\phi_n(r_2)\bigr) \abs[\big]{\phi'_n(r_1)-\phi'_n(r_2)}\\
    &\leq 2\eta_k\bigl(\phi_n(r_2)\bigr) \abs{r_1 - r_2},
  \end{align*}
  thus establishing (e) as well.
\end{proof}

The definition of $\sigma^n$ as a map from $\Omega \times \erre_+
\times H_\alpha \to \cL^2(U,H_\alpha)$, as well as the corresponding
measurability properties, follows, \emph{mutatis mutandis}, as for
$\sigma^{(n)}$ in the previous section.

\begin{lemma}
  \label{lm:62}
  Let hypothesis~1 be satisfied. Then
  \begin{gather*}
  \lim_{n \to \infty} \norm[\big]{\sigma^n(\omega,t,v) -
    \sigma(\omega,t,v)}_{\cL^2(U,H_\alpha)} = 0,\\
  \lim_{n \to \infty} \norm[\big]{\beta^n(\omega,t,v) -
    \beta(\omega,t,v)}_{H_\alpha} = 0
  \end{gather*}
  for all $(\omega,t) \in \Omega \times \erre_+$ and $v \in H_\alpha$.
\end{lemma}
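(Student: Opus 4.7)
The key observation is that unlike the cut-off $\chi_n$ of \S\ref{sec:approx1}, which acts on the spatial variable $x$ ranging over all of $\erre_+$, the cut-off $\phi_n$ here acts on the range variable $r$, and every $v \in H_\alpha$ is automatically bounded in sup-norm thanks to the embedding $H_\alpha \embed L^\infty(\erre_+)$ (the first lemma of \S\ref{ssec:sp}). So the plan is essentially to exploit this boundedness to conclude that, for each fixed $v$, the sequence $\sigma^n(\omega,t,v)$ is eventually constant equal to $\sigma(\omega,t,v)$, from which both convergence statements follow immediately.

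More precisely, I would first fix $(\omega,t) \in \Omega \times \erre_+$ and $v \in H_\alpha$, and set $M := \norm{v}_{L^\infty} \leq \delta\norm{v}_{H_\alpha}$, where $\delta$ denotes the norm of the embedding $H_\alpha \embed L^\infty$ (as in Proposition~\ref{prop:pippo}). For every integer $n \geq M$, the construction of $\phi_n$ gives $\phi_n(r) = r$ and $\phi'_n(r) = 1$ whenever $|r| \leq n$; applying this at $r = v(x)$ for each $x \in \erre_+$, one obtains $\phi_n(v(x)) = v(x)$ and $\phi'_n(v(x)) = 1$ pointwise. Substituting into the defining identity $\sigma_k^n(\omega,t,x,r) = \sigma_k(\omega,t,x,\phi_n(r))$ and using the formulas
\[
\partial_1 \sigma_k^n(\omega,t,x,v(x)) = \partial_1 \sigma_k(\omega,t,x,v(x)),
\qquad
\partial_2 \sigma_k^n(\omega,t,x,v(x)) = \partial_2 \sigma_k(\omega,t,x,v(x)),
\]
one sees that $\sigma^n_k(\omega,t,\cdot,v(\cdot))$ and its distributional $x$-derivative coincide with $\sigma_k(\omega,t,\cdot,v(\cdot))$ and its $x$-derivative, for each $k$, as soon as $n \geq M$. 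By the definition of the $H_\alpha$-norm and summing over $k$, this gives $\sigma^n(\omega,t,v) = \sigma(\omega,t,v)$ as elements of $\cL^2(U,H_\alpha)$ for all such $n$, so the first limit is trivially zero.

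For the second limit, the simplest route is to use that $\beta^n = \ip{\sigma^n}{I\sigma^n}$ and $\beta = \ip{\sigma}{I\sigma}$, so the identity $\sigma^n(\omega,t,v) = \sigma(\omega,t,v)$ for $n \geq M$ gives $\beta^n(\omega,t,v) = \beta(\omega,t,v)$ for the same $n$. Alternatively (and in keeping with the proof of Lemma~\ref{lm:54}), one may appeal to Lemma~\ref{lm:hagi} to estimate
\[
\norm[\big]{\beta^n(v) - \beta(v)}_{H_\alpha} \lesssim \bigl( \norm[\big]{\sigma^n(v)}_{\cL^2} + \norm[\big]{\sigma(v)}_{\cL^2} \bigr) \norm[\big]{\sigma^n(v) - \sigma(v)}_{\cL^2},
\]
where the first factor is bounded uniformly in $n$ (by Lemma~\ref{lm:kn} and Proposition~\ref{prop:pippo}, with $\eta_k$ replaced by $\eta_k \circ \phi_n$, noting that $\widetilde\eta(\phi_n(\cdot))$ is controlled by $\widetilde\eta$ on a fixed bounded set), and the second factor is eventually zero by the first part.

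The substantive point of the argument is really the remark that the composition with $\phi_n$ is ineffective on bounded functions — there is no genuine obstacle, because the nontrivial technicalities (the local Lipschitz bounds, the estimates on the derivatives of $\sigma_k^n$) have already been collected in Lemma~\ref{lm:kn}. If an obstacle were to appear, it would be in verifying that the pointwise identification $\sigma^n(v) = \sigma(v)$ also entails identification of $x$-derivatives in the distributional sense; but this follows from the chain rule applied to the $C^\infty_b$ function $\phi_n$, which is linear on $[-n,n] \supset v(\erre_+)$.
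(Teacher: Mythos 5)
Your proposal is correct, but it takes a genuinely different route from the paper. The paper's proof splits $\norm{\sigma^n(v)-\sigma(v)}_{\cL^2(U,H_\alpha)}$ into the two terms coming from $\partial_1\sigma_k(\cdot,\phi_n(v))-\partial_1\sigma_k(\cdot,v)$ and $\bigl(\partial_2\sigma_k(\cdot,\phi_n(v))\phi_n'(v)-\partial_2\sigma_k(\cdot,v)\bigr)v'$, and then passes to the limit using the pointwise convergences $\phi_n\to\mathrm{id}$, $\phi_n'\to 1$, the domination $\eta_k\circ\phi_n\le\eta_k$ from Lemma~\ref{lm:kn}, and the dominated convergence theorem applied jointly to the sum over $k$ and the integral in $x$; the $\beta^n$ statement is then obtained via Lemma~\ref{lm:hagi} exactly as you indicate in your alternative. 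You instead exploit the embedding $H_\alpha\embed L^\infty$ to note that for $n\ge\norm{v}_{L^\infty}$ the cut-off $\phi_n$ is the identity on the range of $v$, so $\sigma^n(\omega,t,v)=\sigma(\omega,t,v)$ and hence $\beta^n(\omega,t,v)=\beta(\omega,t,v)$ exactly, which is a strictly stronger conclusion (eventual equality rather than mere convergence) obtained with less machinery; your worry about distributional $x$-derivatives is moot, since the two superposed functions of $x$ coincide pointwise everywhere and therefore have the same distributional derivative. What the paper's dominated-convergence argument buys is robustness: it does not use boundedness of $v$ and so would survive on a state space such as $L^2_{-\alpha}$, where elements are not sup-norm bounded and your ``eventually constant'' observation fails; for the lemma as stated on $H_\alpha$, however, your shortcut is perfectly valid.
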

\begin{proof}
  By definition of the functions $\sigma_k^n$ it follows that, for any
  $v \in H_\alpha$,
  \begin{align*}
    \norm[\big]{\sigma^n(v)-\sigma(v)}_{\cL^2(U,H_\alpha)}
    &\leq \Bigl( \sum_{k=1}^\infty \norm[\big]{%
      \partial_1\sigma_k(\cdot,\phi_n(v))
      - \partial_1\sigma_k(\cdot,v)}^2_{L^2_\alpha} \Bigr)^{1/2}\\
    &\quad + \Bigl( \sum_{k=1}^\infty \norm[\big]{%
      \partial_2\sigma_k(\cdot,\phi_n(v))\phi'_n(v)v'
      - \partial_2\sigma_k(\cdot,v)v'}^2_{L^2_\alpha} \Bigr)^{1/2}.
  \end{align*}
  Recalling that, as $n \to \infty$, $\phi_n$ converges pointwise to
  the identity function and $\phi'_n$ converges pointwise from below
  to the function identically equal to one, the claim follows by parts
  (b) and (c) of Lemma~\ref{lm:kn}, the obvious estimate $\eta_k \circ
  \phi_n \leq \eta_k$, and the dominated convergence theorem. The
  pointwise convergence of $\beta^n$ follows exactly as in proof of
  lemma~\ref{lm:54}.
\end{proof}

\begin{lemma}
  \label{lm:63}
  Let hypothesis~1 be satisfied. For every $R \geq 0$ there exists a
  constant $N$, independent of $n$, such that
  \begin{align*}
    \norm[\big]{\sigma^n(\omega,t,v_1) - \sigma^n(\omega,t,v_2)}_{\cL^2(U,H_\alpha)}
    &\leq N \norm[\big]{v_1-v_2}_{H_\alpha}\\
    \norm[\big]{\beta^n(\omega,t,v_1) - \beta^n(\omega,t,v_2)}_{H_\alpha}
    &\leq N \norm[\big]{v_1-v_2}_{H_\alpha}
  \end{align*}
  for all $(\omega,t) \in \Omega \times \erre_+$ and
  $v_1, v_2 \in H_\alpha$ with $\norm{v_1}_{H_\alpha}$,
  $\norm{v_2}_{H_\alpha} \leq R$.
\end{lemma}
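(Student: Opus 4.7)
The plan is to mirror the structure of the proof of Lemma 5.5, since the situation is entirely analogous: Lemma 6.1 shows that the functions $\sigma_k^n$ satisfy the bounds of Hypothesis~1 with $\psi_k$ unchanged and $\eta_k$ replaced by $\eta_k \circ \phi_n$ (up to the harmless constant $3$ appearing in part~(e), which only affects implicit multiplicative constants). Since the conclusion of Proposition~\ref{prop:pippo} depends on the data of Hypothesis~1 only through $\norm{\psi}_{\ell^2(L^2_\alpha)}$ and $\widetilde{\eta}$ evaluated at $\delta\norm{v}_{H_\alpha}$, what I need to show is that, uniformly in $n$, the new function $\widetilde{\eta^n}(r) := \bigl(\sum_k \eta_k(\phi_n(r))^2\bigr)^{1/2}$ is dominated by $\widetilde{\eta}(r)$, which is bounded on bounded sets.

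The key observation is the pointwise inequality $\widetilde{\eta^n} \leq \widetilde{\eta}$. Indeed, $\phi_n$ is odd, $1$-Lipschitz, and $\phi_n(0)=0$, so $\abs{\phi_n(r)} \leq \abs{r}$ for all $r \in \erre$; since each $\eta_k$ is even and increasing on $\erre_+$, this yields $\eta_k(\phi_n(r)) \leq \eta_k(r)$ for every $k \in \enne$. Moreover $\eta_k \circ \phi_n$ remains even and increasing on $\erre_+$, so Hypothesis~1 is genuinely satisfied for $(\sigma_k^n)$ with $\eta_k$ replaced by $\eta_k \circ \phi_n$ (absorbing the factor $3$ into the constants). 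Applying Proposition~\ref{prop:pippo} to $\sigma^n$ then gives
\[
  \norm[\big]{\sigma^n(v)}_{\cL^2(U,H_\alpha)}
  \leq \widetilde{\eta}\bigl(\delta\norm{v}_{H_\alpha}\bigr)
  \bigl(\norm{v}_{H_\alpha} + \norm{\psi}_{\ell^2(L^2_\alpha)}\bigr)
\]
and an analogous local Lipschitz estimate for $\sigma^n(v_1) - \sigma^n(v_2)$, with constants depending only on $\alpha$, $\norm{\psi}_{\ell^2(L^2_\alpha)}$ and $\widetilde{\eta}$, but not on $n$. Restricting to $\norm{v_1}_{H_\alpha}, \norm{v_2}_{H_\alpha} \leq R$ and using that $\widetilde{\eta}$ is bounded on $[0,\delta R]$ produces the asserted uniform local Lipschitz constant $N=N(\alpha,R)$ for $\sigma^n$.

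For $\beta^n = \ip{\sigma^n}{I\sigma^n}$, I would then invoke Lemma~\ref{lm:hagi}, which gives
\[
  \norm[\big]{\beta^n(v_1) - \beta^n(v_2)}_{H_\alpha}
  \lesssim \bigl(\norm[\big]{\sigma^n(v_1)}_{\cL^2(U,H_\alpha)}
  + \norm[\big]{\sigma^n(v_2)}_{\cL^2(U,H_\alpha)}\bigr)
  \norm[\big]{\sigma^n(v_1) - \sigma^n(v_2)}_{\cL^2(U,H_\alpha)}.
\]
By the two estimates already established, both factors on the right are bounded, on the ball of radius $R$, by constants depending on $\alpha$ and $R$ only, and this yields the claimed uniform local Lipschitz estimate for $\beta^n$. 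There is no real obstacle here: all the analytical work was done in Section~\ref{sec:prel} (the Lipschitz estimate of Proposition~\ref{prop:pippo} and the bilinearity trick of Lemma~\ref{lm:hagi}), and the only new input is the elementary monotonicity argument that $\eta_k\circ\phi_n$ is dominated by $\eta_k$ uniformly in $n$.
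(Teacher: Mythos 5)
Your proposal is correct and follows essentially the same route as the paper: it invokes Lemma~\ref{lm:kn} to transfer Hypothesis~1 to $(\sigma_k^n)$ with $\eta_k$ replaced by (a multiple of) $\eta_k\circ\phi_n$, uses the monotonicity observation $\eta_k\circ\phi_n\leq\eta_k$ (from $\abs{\phi_n(r)}\leq\abs{r}$ and $\eta_k$ even and increasing) to make the constants in Proposition~\ref{prop:pippo} uniform in $n$, and then handles $\beta^n$ via Lemma~\ref{lm:hagi} exactly as in Lemma~\ref{lm:55}. No gaps.
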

\begin{proof}
  Lemma \ref{lm:kn} implies that hypothesis~1 holds with $\sigma_k$
  replaced by $\sigma_k^n$ and $\eta_k$ replaced by
  $3 \eta_k \circ \phi_n$. Since $\abs{\phi_n(r)} \leq \abs{r}$ for
  all $r \in \erre$ and $\eta_k$ is even and increasing, one has
  $\eta_k \circ \phi_n \leq \eta_k$, hence
  Proposition~\ref{prop:pippo} yields
  \begin{align*}
    \norm[\big]{\sigma^n(\omega,t,v_1)-\sigma^n(\omega,t,v_2)}_{\cL^2(U,H_\alpha)}
    &\lesssim_\alpha
    \Bigl( \widetilde{\eta}\bigl(%
    \delta \norm{v_1}_{H_\alpha} \bigr) + \widetilde{\eta}\bigl(%
      \delta \norm{v_2}_{H_\alpha} \bigr) \Bigr) \, \cdot\\
    &\qquad \cdot \, \bigl(%
    \norm{v_2}_{H_\alpha} + \norm{\psi}_{\ell^2(L^2_\alpha)} \bigr)
    \norm[\big]{v_1-v_2}_{H_\alpha}.
  \end{align*}
  Denoting the implicit constant in this inequality by $c(\alpha)$,
  setting
  \[
    N := 2 c(\alpha) \, \widetilde{\eta}(\delta R) \bigl( R +
    \norm{\psi}_{\ell^2(L^2_\alpha)} \bigr),
  \]
  the claim regarding $\sigma^n$ follows.
  Moreover, again by the inequality $\eta_k \circ \phi_n \leq \eta_k$,
  lemma~\ref{lm:kn} and the (proof of) proposition~\ref{prop:pippo}
  imply
  \[
  \norm[\big]{\sigma^{(n)}(\omega,t,v)}_{\cL^2(U,H_\alpha)}
    \lesssim_\alpha \widetilde{\eta}\bigl(%
    \delta \norm{v}_{H_\alpha} \bigr) \,
    \bigl( \norm{v}_{H_\alpha} + \norm{\psi}_{\ell^2(L^2_\alpha)} \bigr)
  \]
  for every $n \in \enne$, $v \in H_\alpha$, and $(\omega,t) \in
  \Omega \times \erre_+$. The claim about $\beta^n$ then follows by
  lemma~\ref{lm:hagi}, as in the proof of lemma~\ref{lm:55}.
\end{proof}

\section{Positivity of forward rates}
\label{sec:fw}
\begin{thm}
  Assume that hypothesis~1 is fulfilled and that
  \[
  \abs{\sigma_k(x,r)} \ind{\{r \leq 0\}} \lesssim r^-,
  \]
  or, more generally, that
  \[
  \abs{\sigma_k(x,r)} \leq \abs{r} \, \eta_k(r) \,
  \int_x^\infty \psi_k(y)\,dy
  \]
  For the latter to hold it is sufficient that $\sigma(x,0)=0$.
  Then forward rates are positive.  
\end{thm}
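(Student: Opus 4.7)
The plan is to apply the abstract maximum principle of Theorem~\ref{thm:pos} on $L^2_{-\alpha}$ to a doubly approximated Musiela equation, then remove the two approximations in succession.

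First I would combine the value cutoff of \S\ref{sec:approx2} with the spatial cutoff of \S\ref{sec:approx1} by setting $\sigma_k^{n,m}(\omega,t,x,r) := \sigma_k(\omega,t,x,\phi_n(r))\chi_m(x)$. Writing $\Psi_k(x) := \int_x^\infty \psi_k(y)\,dy$, the hypothesis $|\sigma_k(x,r)| \leq |r|\,\eta_k(r)\,\Psi_k(x)$, combined with $|\phi_n(r)| \leq \min(|r|,n+1)$ and $\eta_k \circ \phi_n \leq \eta_k(n+1)$, yields
\[
  |\sigma_k^{n,m}(h)(x)| \leq \min(|h(x)|,n+1)\,\eta_k(n+1)\,\Psi_k(x)\,\chi_m(x),
\]
where $\Psi_k(x) \leq \alpha^{-1/2}e^{-\alpha x/2}\norm{\psi_k}_{L^2_\alpha}$ by Cauchy-Schwarz. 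Arguments analogous to Lemmata~\ref{lm:55} and \ref{lm:63}, aided by the compact support of $\chi_m$, show that $\sigma^{n,m}$ is Lipschitz from $L^2_{-\alpha}$ into $\cL^2(U,L^2_{-\alpha})$ and bounded from $L^2_{-\alpha}$ into $\cL^2(U,L^2_\alpha)$, uniformly in $(\omega,t)$. Consequently, Proposition~\ref{prop:WP}, with $A$ replaced by $A+\alpha/2$, yields a unique global mild solution $u^{n,m}$ to
\[
  du^{n,m} + (A + \tfrac{\alpha}{2})u^{n,m}\,dt = \bigl(\tfrac{\alpha}{2}u^{n,m} + \beta^{n,m}(u^{n,m})\bigr)dt + \sigma^{n,m}(u^{n,m})\,dW.
\]

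The core of the proof is verifying the hypotheses of Theorem~\ref{thm:pos} for this equation. The semigroup $(e^{-\alpha t/2}S(t))_{t\geq 0}$ generated by $-(A+\alpha/2)$ is positivity preserving, so (a) holds; global Lipschitz continuity and linear growth of the coefficients, required by (b), have just been established. For hypothesis (c) of \S\ref{sec:main}, the linear contribution is $-\ip{h^-}{(\alpha/2)h}_{L^2_{-\alpha}} = (\alpha/2)\norm{h^-}^2_{L^2_{-\alpha}}$. On $\{h\leq 0\}$ one has $|\phi_n(h(x))| \leq h^-(x)$, whence
\[
  \ind{\{h\leq 0\}}|\sigma_k^{n,m}(h)(x)| \leq h^-(x)\,\eta_k(n+1)\,\Psi_k(x);
\]
summing over $k$ via the bound on $\Psi_k$ and the inclusion $\ell^2 \subset \ell^4$ yields $\norm{\ind{\{h\leq 0\}}\sigma^{n,m}(h)}^2_{\cL^2(U,L^2_{-\alpha})} \lesssim_{n,m} \norm{h^-}^2_{L^2_{-\alpha}}$. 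For the drift $\beta^{n,m}$, retain the factor $|h(x)|$ in one of the two copies of $\sigma_k^{n,m}$ and bound $|I\sigma_k^{n,m}(h)(x)| \leq (n+1)\eta_k(n+1)\norm{\Psi_k}_{L^1}$ uniformly in $x$ and $h$, using the exponential decay of $\Psi_k$. A further Cauchy-Schwarz step gives $|\beta^{n,m}(h)(x)| \lesssim_{n,m} |h(x)|\,e^{-\alpha x/2}$, and the identity $h^-(x)|h(x)| = h^-(x)^2$ produces
\[
  \abs[\big]{\ip{h^-}{\beta^{n,m}(h)}_{L^2_{-\alpha}}} \lesssim_{n,m} \int_0^\infty h^-(x)^2 e^{-3\alpha x/2}\,dx \leq \norm{h^-}^2_{L^2_{-\alpha}}.
\]
Hypothesis (c) is therefore satisfied, and Theorem~\ref{thm:pos} yields $u^{n,m} \geq 0$.

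Finally I would pass to the limit $m \to \infty$ with $n$ fixed, then $n \to \infty$. The uniform local Lipschitz continuity of $(\sigma^{n,m})_m$ and $(\sigma^n)_n$ and the pointwise convergences $\sigma^{n,m} \to \sigma^n$ and $\sigma^n \to \sigma$ in $\cL^2(U,L^2_{-\alpha})$ follow from $L^2_{-\alpha}$-adapted analogues of Lemmata~\ref{lm:54}--\ref{lm:55} and \ref{lm:62}--\ref{lm:63} (the same computations go through once the $H_\alpha$ norm is replaced by the $L^2_{-\alpha}$ norm and $L^2_\alpha \embed L^1(\erre_+)$ is invoked). Theorem~\ref{thm:cozza} then gives $u^{n,m} \to u^n \to u$ in $L^0(\Omega \times [0,T]; L^2_{-\alpha})$ for every $T>0$, and the a.e.~pointwise limit of nonnegative functions is nonnegative, so $u \geq 0$ in $L^2_{-\alpha}$. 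Since $H_\alpha \embed L^2_{-\alpha}$, the $L^2_{-\alpha}$-valued solution agrees with the $H_\alpha$-valued local mild solution on its lifetime, and elements of $H_\alpha$ are continuous, so the forward curves $t \mapsto f(t,t+\cdot) = u(t,\cdot)$ are pointwise nonnegative. The main technical obstacle is the verification of hypothesis~(c): both approximations are indispensable, with $\phi_n$ converting the structural bound $|\sigma_k(x,r)| \lesssim |r|$ into control by $h^-$ on $\{h\leq 0\}$, and $\chi_m$ providing the integrability that makes $\sigma^{n,m}$ bounded into $\cL^2(U,L^2_\alpha)$ so that Proposition~\ref{prop:WP} applies.
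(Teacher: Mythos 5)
Your overall architecture --- the double truncation (in the spatial variable and in the value of the forward curve), the verification of the hypotheses of Theorem~\ref{thm:pos} for the truncated equation on the state space $L^2_{-\alpha}$, and the removal of the truncations via Theorem~\ref{thm:cozza} --- is the same as the paper's, and your verification of hypothesis (c) is essentially the paper's computation: you pair the factor $\abs{h(x)}$ coming from one copy of $\sigma_k^{n,m}$ with $h^-(x)$ and bound the integrated copy by a constant. Your explicit shift by $\alpha/2$, rewriting the equation with $A+\alpha/2$ so that the linear operator is maximal monotone on $L^2_{-\alpha}$, addresses a point the paper glosses over.

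The gap is in the final limiting step. You propose to run both limits ($m\to\infty$, then $n\to\infty$) in the state space $L^2_{-\alpha}$, asserting that the uniform local Lipschitz continuity and pointwise convergence of Lemmata~\ref{lm:54}--\ref{lm:55} and \ref{lm:62}--\ref{lm:63} ``go through once the $H_\alpha$ norm is replaced by the $L^2_{-\alpha}$ norm''. They do not: those proofs rest on the embedding $H_\alpha\embed L^\infty$, which controls $\eta_k(v)$ by $\eta_k(\delta\norm{v}_{H_\alpha})$, and there is no such embedding for $L^2_{-\alpha}$. The untruncated superposition operator $v\mapsto\sigma_k(\cdot,v(\cdot))$ is therefore not locally Lipschitz on $L^2_{-\alpha}$ (unless $\eta$ is bounded), so Theorem~\ref{thm:cozza} cannot be invoked on $L^2_{-\alpha}$ for the limit removing the value cutoff. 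Even for the other limit, the Lipschitz constant of $\beta^{n,m}$ on $L^2_{-\alpha}$ furnished by Lemma~\ref{lm:lipL} involves $\norm{\sigma^{n,m}(v)}_{\ell^2(L^2_\alpha)}$, and by your own closing observation it is precisely the spatial cutoff that makes this finite: the quantity $\norm[\big]{\chi_m\int_\cdot^\infty\psi_k(y)\,dy}_{L^2_\alpha}$ grows like $\sqrt{m}$ (cf.\ Lemma~\ref{lm:pepsi}), so the bounds you have for the drift are not uniform in the truncation parameter. The remedy is the paper's: perform both limits in $H_\alpha$, where Lemmata~\ref{lm:54}--\ref{lm:55} and \ref{lm:62}--\ref{lm:63} supply exactly the uniform local Lipschitz continuity and pointwise convergence that Theorem~\ref{thm:cozza} requires, keep track of the maximal stochastic intervals of the local $H_\alpha$-valued solutions, and transfer the positivity of the $L^2_{-\alpha}$-valued solutions $u^{n,m}$ (which coincide with the $H_\alpha$-valued ones on their lifetimes) through $H_\alpha$-convergence, using $H_\alpha\embed C(\erre_+)$.
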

\begin{proof}
  For every $k \in \enne$ and $n,m \in \enne$, let
  $\sigma_k^{n,m} := \bigl(\sigma_k^{(n)}\bigr)^m$, i.e.
  \[
    \sigma_k^{n,m}(x,r) = \sigma_k(x,\phi_m(r))\chi_n(x).
  \]
  Lemmata~\ref{lm:52} and \ref{lm:kn} imply that, for each
  $n,m \in \enne$, $\sigma_k^{n,m}$ satisfies hypothesis~1 with
  $\psi_k$ and $\eta_k$ replaced by $\psi_k+\bar{\psi}_k^{(n)}$ and
  $3\eta_k \circ \phi_n$, respectively. Since, by
  lemma~\ref{lm:pepsi}, the $\ell^2(L^2_\alpha)$ norm of
  $\bar{\psi}^{(n)}$ is dominated by the one of $\psi$, and
  $\eta_k \circ \phi_n \leq \eta_k$ for every $k,n \in \enne$,
  proposition~\ref{prop:pippo} implies that the corresponding map
  $\sigma^{n,m}: \Omega \times \erre_+ \times H_\alpha \to
  \cL^2(U,H_\alpha)$ is well defined for every $n,m \in \enne$, and
  that it is locally bounded and locally Lipschitz continuous in its
  third argument, uniformly with respect to the other ones. In
  particular, setting
  \[
    \beta^{n,m} := \ip[\big]{\sigma^{n,m}}{I\sigma^{n,m}},
  \]
  the equation
  \begin{equation}
    \label{eq:mona}
    dv + Av\,dt = \beta^{n,m}(v)\,dt + \sigma^{n,m}(v)\,dW,
    \qquad u^{n,m}(0)=u_0,
  \end{equation}
  admits a unique $H_\alpha$-valued mild solution $u^{n,m}$ defined on a
  maximal stochastic interval $\co{0}{T_{n,m}}$.

  Let us show that, for every $n,m \in \enne$, $(\sigma_k^{n,m})$
  satisfies the assumptions of proposition~\ref{prop:oli}:
  lemmata~\ref{lm:52} and \ref{lm:kn} imply that
  \[
    \abs[\big]{\sigma^{n,m}_k(x,r)} \lesssim \psi_k^{(n)}(x) \,
    \eta_k(\phi_m(r))
  \]
  as well as, recalling that $\abs{\phi_m'} \leq 1$,
  \begin{align*}
    &\abs[\big]{\sigma^{n,m}_k(x,r_1) - \sigma^{n,m}_k(x,r_2)}\\
    &\hspace{3em}\lesssim \psi_k^{(n)}(x) \, \bigl( \eta_k(\phi_m(r_1))
      + \eta_k(\phi_m(r_2)) \bigr) \, \abs[\big]{\phi_m(r_1)-\phi_m(r_2)}\\
    &\hspace{3em}\leq \psi_k^{(n)}(x) \, \bigl( \eta_k(\phi_m(r_1))
      + \eta_k(\phi_m(r_2)) \bigr) \, \abs{r_1-r_2}.
  \end{align*}
  Since $\abs{\phi_m} \leq m+1$, it follows that
  \[
    \abs[\big]{\sigma_k^{n,m}(x,r)} \lesssim \psi_k^{(n)}(x) \,
    \eta_k(m+1),
  \]
  where
  $\norm[\big]{\psi_k^{(n)} \, \eta_k(m+1)}_{L^2_\alpha} = \eta_k(m+1)
  \norm[\big]{\psi_k^{(n)}}_{L^2_\alpha}$, hence, by the
  Cauchy-Schwarz inequality and the estimate
  $\norm{\cdot}_{\ell^4} \leq \norm{\cdot}_{\ell^2}$,
  \begin{align*}
    \norm[\big]{\bigl( \psi_k^{(n)} \, \eta_k(m+1) \bigr)}_{\ell^2(L^2_\alpha)}
    &= \Bigl( \sum_{k=1}^\infty \eta_k^2(m+1)
      \norm[\big]{\psi_k^{(n)}}^2_{L^2_\alpha} \Bigr)^{1/2}\\
    &\leq \Bigl( \sum_{k=1}^\infty \eta_k^4(m+1) \Bigr)^{1/4}
      \Bigl( \sum_{k=1}^\infty \norm[\big]{\psi_k^{(n)}}^4_{L^2_\alpha}
      \Bigr)^{1/4}\\
    &\leq \widetilde{\eta}(m+1) \,
      \norm[\big]{\psi^{(n)}}_{\ell^2(L^2_\alpha)}.
  \end{align*}
  Recalling that the $\ell^2(L^2_\alpha)$ norm of $\psi^{(n)}$ is
  finite by lemma~\ref{lm:pepsi}, this shows that assumption (a) of
  proposition~\ref{prop:oli} is fulfilled. Similarly, one has
  \[
    \abs[\big]{\sigma_k^{n,m}(x,r_1) - \sigma_k^{n,m}(x,r_2)}
    \lesssim \psi_k^{(n)}(x) \, \eta_k(m+1) \, \abs{r_1-r_2},
  \]
  where
  $\norm[\big]{\psi_k^{(n)} \, \eta_k(m+1)}_{L^\infty} = \eta_k(m+1)
  \, \norm[\big]{\psi_k^{(n)}}_{L^\infty}$ and
  \[
    \norm[\big]{\psi_k^{(n)}}_{L^\infty} \leq
    \norm[\big]{\psi_k}_{L^1} \lesssim
    \norm[\big]{\psi_k}_{L^2_\alpha}.
  \]
  Since $\psi \in \ell^2(L^2_\alpha)$, assumption (b) of
  proposition~\ref{prop:oli} is also satisfied. Therefore
  \eqref{eq:mona} admits a unique $L^2_{-\alpha}$-valued mild solution
  on any finite time interval. Since $H_\alpha \embed L^2_{-\alpha}$,
  it follows that the $H_\alpha$-valued solution $u^{n,m}$ coincides
  on $\co{0}{T_{n,m}}$ with the global $L^2_{-\alpha}$-valued solution,
  which we shall also denote by $u^{n,m}$.

  One has
  \[
    \abs[\big]{\sigma_k^{n,m}(x,r)} =
    \abs[\big]{\sigma_k(x,\phi_m(r)) \chi_n(x)}
    \leq \abs[\big]{\sigma_k(x,\phi_m(r))}
  \]
  and $r \leq 0$ if and only if $\phi_m(r) \leq 0$, hence
  \[
    \abs[\big]{\sigma_k^{n,m}(x,r)} \ind{\{r \leq 0\}}
    \leq \abs[\big]{\sigma_k(x,\phi_m(r))}
    \ind{\{\phi_m(r) \leq 0\}} \lesssim \phi_m(r)^- \leq r^-.
  \]
  Under the more general hypothesis on $(\sigma_k)$, one has
  \[
    \abs[\big]{\sigma_k^{n,m}(x,r)} \ind{\{r \leq 0\}}
    \leq \psi_k^{(n)}(x) \, \eta_k(\phi_m(r)) \,
    \abs{r} \ind{\{r \leq 0\}}
    \leq \psi_k^{(n)}(x) \, \eta_k(m+1) \, r^-.
  \]
  Then, for any $v \in L^2_{-\alpha}$,
  \[
    \norm[\big]{\sigma_k^{n,m}(v) \ind{\{v \leq 0\}}}_{L^2_{-\alpha}}
    \leq \eta_k(m+1) \, \norm[\big]{\psi_k^{(n)}}_{L^\infty} \,
    \norm[\big]{v^-}_{L^2_{-\alpha}},
  \]
  where $\norm[\big]{\psi_k^{(n)}}_{L^\infty} \leq
  \norm[\big]{\psi_k^{(n)}}_{L^1} \lesssim
  \norm[\big]{\psi_k^{(n)}}_{L^2_\alpha}$, hence
  \[
    \norm[\big]{\sigma^{n,m}(v) \ind{\{v \leq 0\}}}_{\ell^2(L^2_{-\alpha})}
    \lesssim \widetilde{\eta}(m+1) \, 
    \norm[\big]{\psi^{(n)}}_{\ell^2(L^2_\alpha)} \, \norm[\big]{v^-}_{L^2_{-\alpha}}.
  \]
  Let us now consider
  \[
  \ip[\big]{v^-}{\beta^{n,m}(v)}_{L^2_{-\alpha}} = \sum_{k=1}^\infty
  \int_0^\infty e^{-\alpha x} v^-(x) \sigma_k^{n,m}(x,v(x)) \int_0^x
  \sigma_k^{n,m}(y,v(y))\,dy\,dx,
  \]
  where
  \[
  \int_0^x \sigma_k^{n,m}(y,v(y))\,dy \leq
  \norm[\big]{\sigma_k^{n,m}(v)}_{L^1} \lesssim
  \norm[\big]{\sigma_k^{n,m}(v)}_{L^2_\alpha}.
  \]
  We recall that, as in the proof of proposition~\ref{prop:oli}, one
  has
  \[
  \norm[\big]{\sigma_k^{n,m}(v)}_{L^2_\alpha} \leq
  \norm[\big]{\psi_k^{(n)}}_{L^2_\alpha}.
  \]
  Note that $r^- = \abs{r} \ind{\{r \leq 0\}}$ implies
  \[
  r^- \sigma_k^{n,m}(x,r) \leq \psi_k^{(n)}(x) \eta_k(m+1) \abs{r^-}^2,
  \]
  from which it follows, by the H\"older inequality,
  \begin{align*}
    \ip[\big]{v^-}{\beta^{n,m}(v)}_{L^2_{-\alpha}} &\lesssim
    \sum_{k=1}^\infty \eta_k(m+1) \,
    \norm[\big]{\sigma_k^{n,m}(v)}_{L^2_\alpha} \, \int_0^\infty
    \psi_k^{(n)}(x) \abs{v^-(x)}^2 e^{-\alpha x}\,dx\\
    &\lesssim \norm[\big]{v^-}_{L^2_{-\alpha}} \sum_{k=1}^\infty
    \eta_k(m+1) \, \norm[\big]{\sigma_k^{n,m}(v)}_{L^2_\alpha} \,
    \norm[\big]{\psi_k^{(n)}}_{L^2_\alpha}\\
    &\leq \norm[\big]{v^-}_{L^2_{-\alpha}} \sum_{k=1}^\infty
    \eta_k(m+1) \, \norm[\big]{\psi_k^{(n)}}^2_{L^2_\alpha}\\
    &\leq \norm[\big]{v^-}_{L^2_{-\alpha}} \, \widetilde{\eta}(m+1) \,
    \norm[\big]{\psi^{(n)}}^2_{\ell^2(L^2_\alpha)}.
  \end{align*}
  Theorem~\ref{thm:pos} applied to equation~\eqref{eq:mona} on the
  space $L^2_{-\alpha}$ thus yields $u^{n,m}(t) \geq 0$ for every $t
  \in \erre_+$.

  As already observed, thanks to lemma~\ref{lm:53},
  $\sigma^{(n)}$ satisfies hypothesis~1 with $\psi_k$ replaced by
  $\psi_k+\bar{\psi}^{(n)}$. Then lemmata~\ref{lm:62} and \ref{lm:63}
  imply, in view of theorem~\ref{thm:cozza}, that
  \[
    u^{n,m}\ind{\co{0}{T_n \wedge T_{n,m}}} \, \longrightarrow \,
    u^n\ind{\co{0}{T_n}}
  \]
  in $L^0(\Omega \times \erre_+;H_\alpha)$ as $m \to \infty$, where
  $u_n$ is the unique local mild solution on the maximal stochastic
  interval $\co{0}{T_n}$ to the equation
  \[
    dv + Av\,dt = \beta^n(v)\,dt + \sigma^n(v)\,dW, \qquad u^n(0)=u_0.
  \]
  By a completely similar argument,
  \[
    u^n \ind{\co{0}{T_n \wedge T}} \, \longrightarrow \,
    u \ind{\co{0}{T}}
  \]
  in $L^0(\Omega \times \erre_+;H_\alpha)$ as $n \to \infty$, where
  $u$ is the unique local mild solution on the maximal stochastic
  interval $\co{0}{T}$ to Musiela's SPDE.  Since convergence in
  $H_\alpha$ implies convergence in $C(\erre_+)$, positivity of
  $u^{n,m}$ implies positivity of $u^n$ on $\co{0}{T_n}$ for every
  $n \in \enne$, which in turns implies positivity of $u$ on
  $\co{0}{T}$.
\end{proof}

\bibliographystyle{amsplain}
\bibliography{ref,finanza}

\end{document}